\numberwithin{equation}{section}
\setlist[enumerate]{
itemindent=0pt,
leftmargin=25pt
}
\DeclareMathAlphabet{\mathpzc}{OT1}{pzc}{m}{it} 
\newcommand{\der}{\mathrm{d}}
\newcommand{\Tang}{\mathrm{T}}
\newcommand{\rank}{\mathrm{rank}}
\newcommand{\diag}{\mathrm{diag}}
\newcommand{\Id}{\mathrm{Id}}
\newcommand{\FF}{\mathbb{F}}
\newcommand{\herm}{{\mathsf{H}}}
\newcommand{\trans}{{\mathsf{T}}}
\newcommand{\vect}[1]{\mathbf{#1}}
\newcommand{\tensor}[1]{\mathpzc{#1}}
\newcommand{\VS}[1]{\mathrm{#1}}
\newcommand{\Var}[1]{\mathcal{#1}}
\newcommand{\BigO}[1]{\mathcal{O}\left(#1\right)}
\newcommand{\sign}{\mathrm{sign}}
\newcommand{\changed}[1]{{#1}}
\crefname{equation}{}{}
\Crefname{equation}{Equation}{Equations} 
\title[An algorithm for Grassmann decomposition]{A chiseling algorithm for low-rank Grassmann decomposition of skew-symmetric tensors}
\author{Nick Vannieuwenhoven}
\affiliation{%
\institution{KU Leuven}
\department{Department of Computer Science} 
\streetaddress{Celestijnenlaan 200A}
\city{Heverlee}
\postcode{3001}
\country{Belgium}
}
\email{nick.vannieuwenhoven@kuleuven.be}
\begin{document}

\begin{abstract}
A numerical algorithm to decompose \changed{an exact} low-rank skew-symmetric tensor into a sum of elementary skew-symmetric tensors is introduced. The algorithm uncovers this Grassmann decomposition based on linear relations that are encoded by the kernel of the differential of the natural action of the general linear group on the tensor, following the ideas of [Brooksbank, Kassabov, and Wilson, \textit{Detecting \changed{null} patterns in tensor data}, arXiv:2408.17425\changed{v2, 2025}]. The Grassmann decomposition can be recovered, up to scale, from a diagonalization of a generic element in this kernel. Numerical experiments illustrate that the algorithm is computationally efficient and quite accurate for mathematically low-rank tensors.
\end{abstract} 

\begin{CCSXML}
<ccs2012>
   <concept>
       <concept_id>10002950.10003705.10003707</concept_id>
       <concept_desc>Mathematics of computing~Solvers</concept_desc>
       <concept_significance>500</concept_significance>
       </concept>
   <concept>
       <concept_id>10002950.10003714.10003715.10003719</concept_id>
       <concept_desc>Mathematics of computing~Computations on matrices</concept_desc>
       <concept_significance>500</concept_significance>
       </concept>
   <concept>
       <concept_id>10002950.10003714.10003739</concept_id>
       <concept_desc>Mathematics of computing~Nonlinear equations</concept_desc>
       <concept_significance>300</concept_significance>
       </concept>
 </ccs2012>
\end{CCSXML} 

\ccsdesc[500]{Mathematics of computing~Solvers}
\ccsdesc[500]{Mathematics of computing~Computations on matrices}
\ccsdesc[300]{Mathematics of computing~Nonlinear equations}

\keywords{Grassmann decomposition, skew-symmetric tensor decomposition, chiseling algorithm}

\maketitle

\section{Introduction}\label{sec_introduction}

\emph{Alternating}, \emph{antisymmetric}, or \emph{skew-symmetric tensors} \changed{are elements of} the \emph{alternating}, \emph{exterior}, or \emph{wedge product} of an $n$-dimensional vector space $\VS{V}$ over a field $\FF$ \citep{Lang2002,Greub1978,Landsberg2012,Winitzki2023}.
This wedge product can be defined for vectors $\vect{v}_1, \ldots, \vect{v}_d \in \VS{V}$ as
\begin{equation}\label{eqn_wedge_product}
 \vect{v}_1 \wedge\cdots\wedge \vect{v}_d := \changed{\frac{1}{d!}} \sum_{\sigma\in\mathfrak{S}([d])} \operatorname{sign}(\sigma)\, \vect{v}_{\sigma_1} \otimes\cdots\otimes \vect{v}_{\sigma_d},
\end{equation}
where $\sign(\sigma)$ is the sign of the permutation $\sigma$ from the set of permutations $\mathfrak{S}$ on $[d]:=\{1,\dots,d\}$, $\sigma_i := \sigma(i)$, and $\otimes$ is the tensor product \citep{Lang2002,Greub1978,Landsberg2012,Winitzki2023}. 
A skew-symmetric tensor of the form \cref{eqn_wedge_product} \changed{is called} an \emph{elementary skew-symmetric tensor}.

\changed{A first} viewpoint on (elementary) skew-symmetric tensors is that they are a space of tensors satisfying certain linear symmetries. Specifically, a transposition of the two vectors at positions $i\ne j$ only swaps the sign of an elementary tensor, i.e.,
\begin{equation} \label{eqn_alternating}
 \vect{v}_1 \wedge\dots \wedge \vect{v}_i\wedge\dots\wedge \vect{v}_j\wedge \dots\wedge\vect{v}_d = 
 -\vect{v}_1 \wedge\dots \wedge \vect{v}_j\wedge\dots\wedge \vect{v}_i\wedge \dots\wedge\vect{v}_d,
\end{equation}
for any $i\ne j$, which explains the terminology ``alternating.''
The space $\wedge^2 \VS{V}$ can be interpreted as the familiar space of skew-symmetric matrices.

\changed{A second} viewpoint on elementary skew-symmetric tensors comes from their relation to $d$-dimensional linear subspaces of $\VS{V}$. It is an elementary fact that if $\VS{U} \subset \VS{V}$ is a subspace with a basis $(\vect{u}_1,\dots,\vect{u}_d)$, then $(\vect{u}_1',\dots,\vect{u}_d')$ is a basis of $\VS{U}$ if and only if there exists a nonzero scalar $\alpha$ such that
\[
 \vect{u}_1 \wedge\dots\wedge \vect{u}_d = \alpha \cdot \vect{u}_1'\wedge\dots\wedge \vect{u}_d';
\]
see \cref{cor_det_transform}.
\changed{In other words, there is a bijective correspondence between the linear subspace spanned by $d$ linearly independent vectors $\vect{u}_1,\dots,\vect{u}_d$ and the affine cone over the elementary skew-symmetric tensor $\vect{u}_1 \wedge\dots\wedge \vect{u}_d$, namely
\[
\operatorname{span}(\vect{u}_1, \dots, \vect{u}_d) \simeq [\vect{u}_1 \wedge\dots\wedge \vect{u}_d] := \{ \alpha \vect{u}_1 \wedge\dots\wedge \vect{u}_d \mid \alpha \in \FF\setminus\{0\} \}.
\]
This identification of $d$-dimensional subspaces with the affine cone over elementary skew-symmetric tensors} is called the \emph{Pl\"ucker embedding} of the \emph{Grassmannian manifold} $\mathrm{Gr}(d,\VS{V})$ of $d$-dimensional linear subspaces of an $n$-dimensional $\VS{V}$ into the space of projective skew-symmetric tensors. \changed{This viewpoint highlights that} the space of all nonzero elementary skew-symmetric tensors
\[
 \Var{G}_n^d := \{ \vect{v}_1 \wedge\dots\wedge \vect{v}_d \mid \vect{v}_i \in \VS{V},\, i=1,\dots,d\}\setminus\{0\} \subset \VS{V}\otimes\dots\otimes\VS{V}
\]
is a smooth manifold. Since $\Var{G}^d_n$ is the cone over the Grassmannian $\mathrm{Gr}(d,\VS{V})$ (in its Pl\"ucker embedding), its dimension is $\dim \Var{G}_n^d = 1+d(n-d)$ \cite[p.~138]{Harris1992}.
The linear span of $\Var{G}_n^d$ is the $\binom{n}{d}$-dimensional vector space $\wedge^d \VS{V}$ of order-$d$ \emph{skew-symmetric tensors}. \changed{It is a vector subspace of the $n^d$-dimensional vector space $\VS{V}\otimes\dots\otimes\VS{V}$.
See Harris \cite[Lecture 6]{Harris1992} for further information on this viewpoint.}

\changed{A third viewpoint on elementary skew-symmetric tensors emerges when we view them as \emph{alternating multilinear maps} \cite[Chapter 5]{Greub1978}. Consider an elementary skew-symmetric tensor $f_1 \wedge\dots\wedge f_d \in \wedge^d \VS{V}^*$, where $\VS{V}^*$ is the dual vector space of $\VS{V}$. By definition of the alternating tensor product \cite{Greub1978}, this tensor uniquely corresponds to the alternating multilinear map
\[
f_1 \wedge\dots\wedge f_d : \VS{V} \times\dots\times \VS{V} \to \FF, \quad 
(\vect{v}_1,\dots,\vect{v}_d) \mapsto \frac{1}{d!} \sum_{\sigma\in\mathfrak{S}([d])} \operatorname{sign}(\sigma) f_{\sigma_1}(\vect{v}_1) \cdots f_{\sigma_d}(\vect{v}_d).
\]
Recalling the definition of the determinant of a $d \times d$ matrix $A$, namely 
\[
\det(A) = \sum_{\sigma\in\mathfrak{S}([d])} \operatorname{sign}(\sigma) a_{\sigma_1,1} \cdots a_{\sigma_d,d},
\]
we see that 
\begin{align}\label{eqn_alt_map_det}
(f_1 \wedge\dots\wedge f_d)(\vect{v}_1,\dots,\vect{v}_d) = \frac{1}{d!} \det 
  \begin{bmatrix} 
    f_1(\vect{v}_1) & \dots & f_1(\vect{v}_d) \\ 
    \vdots & & \vdots \\  
    f_d(\vect{v}_1) & \dots & f_d(\vect{v}_d)
  \end{bmatrix}.
\end{align}
Intuitively, this means that the elementary skew-symmetric tensor $f_1 \wedge\dots\wedge f_d$ essentially represents a generalized determinant function. Indeed, the determinant of $d \times d$ matrices is the special case
\[
\det(A) = d! \cdot (\vect{e}_1^\trans{} \wedge\dots\wedge \vect{e}_d^\trans{})(\vect{a}_1,\dots,\vect{a}_d),
\]
where $\vect{a}_i$ denotes the $i$th column of $A \in \FF^{d\times d}$ and $(\vect{e}_1,\dots,\vect{e}_d)$ is the canonical basis of the space of column vectors $\FF^n$; the row vectors $\vect{e}_i^\trans{}$ define linear functions through matrix multiplication.}

The topic of this article is the \emph{Grassmann decomposition} of a skew-symmetric tensor $\tensor{A} \in \wedge^d \VS{V}$ into a sum of elementary skew-symmetric tensors. That is, given
\begin{equation}\label{eqn_grassmann_decomp}
\tensor{A} = \tensor{A}_1 + \dots + \tensor{A}_r,
\end{equation}
can we compute the set of elementary skew-symmetric tensors $\{\tensor{A}_1, \dots, \tensor{A}_r\}\subset \Var{G}^d_n$?
The smallest $r \in \mathbb{N}$ for which this is possible is called the \emph{Grassmann rank} of $\tensor{A}$. For brevity, it will be referred to as the \emph{Gr-rank} of $\tensor{A}$. 

\subsection*{Connection to applications}
Grassmann decompositions of skew-symmetric tensors have an interesting connection to quantum physics; the following discussion is based on \citep{MVHC2023,ESBL2002}.
A single physical particle (boson or fermion) $v$ is postulated to admit a \emph{state} that can be modeled mathematically by a vector $\vect{v}$ in a vector space $\VS{V}$ whose dimension depends on the information carried by the particle $v$, such as its magnetic, principal, and spin quantum numbers. The joint state of a quantum system comprised of $d$ \emph{indistinguishable} fermions is mathematically modeled by a skew-symmetric tensor in $\wedge^d \VS{V}$.
In particular, a quantum system of $d$ \emph{nonentangled} pure fermions $v_i$ with respective states $\vect{v}_i \in \VS{V}$ is represented mathematically as the elementary skew-symmetric tensor $\vect{v}_1 \wedge\dots\wedge \vect{v}_d$.
Fermions obey the \emph{Pauli exclusion principle}, which postulates that fermions cannot possess the same physical state. Mathematically this is captured by the fact that
\[
 \vect{v}_1 \wedge \vect{v}_1 \wedge \vect{v}_3 \wedge\dots\wedge\vect{v}_d = 0,
\]
which is a consequence of \cref{eqn_alternating}.
A fermionic quantum system can be considered \emph{nonentangled} if and only if its state is represented by an elementary skew-symmetric tensor, i.e., if its Gr-rank is $1$ \citep{ESBL2002}.\footnote{In the physics literature, the rank is called the \emph{Slater rank} and the Grassmann decomposition, a \emph{Slater decomposition}.} Since the space $\wedge^d \VS{V}$ is stratified by Gr-rank, the latter provides one natural mathematical measure of the entanglement of a fermionic quantum system, \changed{though other measures of entangledness have been proposed as well} \citep{ESBL2002}.

\changed{Another application of Grassmann decompositions are efficient algorithms for evaluating arbitrary alternating multilinear maps $f : \VS{V}\times\dots\times\VS{V} \to \FF$. Recall that the vector space of all alternating multilinear maps is isomorphic to the vector space of alternating tensors $\wedge^d \VS{V}^*$ \cite{Greub1978}. Therefore, $f$ can be identified with a skew-symmetric tensor
\begin{align}\label{eqn_general_alt_map}
\tensor{F} = \sum_{1 \le i_1 < i_2 < \dots < i_d \le n} f_{i_1 \dots i_d} \vect{e}_{i_1}^\trans{} \wedge \vect{e}_{i_2}^\trans{} \wedge \dots \wedge \vect{e}_{i_d}^\trans{},
\end{align}
where $\{ \vect{e}_{i_1}^\trans{}\wedge\vect{e}_{i_2}^\trans{}\wedge\dots\wedge\vect{e}_{i_d}^\trans{} \mid 1 \le i_1 < i_2 < \dots < i_d \le n \}$ is the tensor product basis of $\wedge^d \VS{V}$ induced by the basis $(\vect{e}_1^\trans{},\dots,\vect{e}_n^\trans{})$ of $\VS{V}^*$. The universal property of the alternating product \cite{Greub1978} ensures that $\tensor{F}(\vect{v}_1\otimes\dots\otimes\vect{v}_d)=f(\vect{v}_1,\dots,\vect{v}_d)$ for all $\vect{v}_i \in \VS{V}$.
Crucially, evaluating $f$ as suggested by \cref{eqn_general_alt_map} requires the evaluation of the $\binom{n}{d}$ elementary alternating multilinear maps $\vect{e}_{i_1}^\trans{} \wedge\dots\wedge \vect{e}_{i_d}^\trans{}$, after which the resulting scalars are all summed. We have seen in \cref{eqn_alt_map_det} that evaluating an elementary skew-symmetric multilinear map reduces to the determinant of a $d \times d$ matrix, obtained as the product of $d \times n$ and $n \times d$ matrices. We conclude that an arbitrary alternating multilinear map can be evaluated asymptotically with no more than $(d n^2 + d^3 + 1) \binom{n}{d}$ elementary operations, assuming Gauss elimination is used to evaluate the matrix determinant. By contrast, if a Gr-rank $r$ Grassmann decomposition of $\tensor{F}$ is known, i.e., 
\(
\tensor{F} = \sum_{i=1}^r g^1_i \wedge g^2_i \wedge\dots\wedge g^d_i,
\)
where $g_i^k : \VS{V} \to \FF$ are linear forms, then the complexity reduces to the evaluation of only $r$ elementary skew-symmetric multilinear maps. That is, the asymptotic complexity decreases to $\mathcal{O}( (d n^2 + d^3 + 1)r )$ operations.}

\changed{\subsection*{Related decompositions}
The Grassmann decomposition is a specific instance of a broad class of \emph{rank decompositions} \cite{Landsberg2012,BCCGO2018} of tensors. A rank decomposition of a tensor $\tensor{A} \in \VS{V}_1 \otimes\dots\otimes \VS{V}_d$, where $\VS{V}_i$ are vector spaces, with respect to a variety or manifold $\Var{M} \subset \VS{V}_1\otimes\dots\otimes \VS{V}_d$ is an expression of the form
\[
\tensor{A} = \tensor{A}_1 + \dots + \tensor{A}_r,\quad\text{with } \tensor{A}_1,\dots,\tensor{A}_r \in \Var{M},
\]
which is of minimal length among all such expressions. This minimal length is called the $\Var{M}$-rank of $\tensor{A}$. 
By the foregoing definitions, we see that the Grassmann decomposition is a rank decomposition with respect to the Grassmannian $\Var{G}_n^d \subset \wedge^d \VS{V} \subset \VS{V}\otimes\dots\otimes\VS{V}$.

Several theoretical properties of rank decompositions have been studied in great generality in applied algebraic geometry \cite{Landsberg2012,BCCGO2018}. In particular, the question of the \emph{identifiability} of a rank decomposition has been mostly resolved for a large class of varieties by recent breakthroughs \cite{MM2024,Ballico2024,TBC2023}. Recall that a tensor $\tensor{A}$ is $r$-identifiable with respect to $\Var{M}$ if there is a unique set $\{\tensor{A}_1,\dots,\tensor{A}_r\} \subset \Var{M}$ of cardinality $r$ such that $\tensor{A} = \tensor{A}_1 +\dots+ \tensor{A}_r$. The first-order \emph{sensitivity} of a rank decomposition $\{\tensor{A}_1,\dots,\tensor{A}_r\}$ relative to rank-preserving perturbations of the tensor $\tensor{A}$ was characterized in \cite{BV2018}. These theoretical results apply in particular to Grassmann decompositions.

It is natural to wonder about the relation of Grassmann decompositions with respect to other, better-studied rank decompositions. These connections are explored next.

\paragraph{Tensor rank decomposition}
Arguably the most famous rank decomposition is the one with respect to the \emph{Segre variety} $\Var{S}$ \cite{Harris1992,BCCGO2018,Landsberg2012}. This rank decomposition was introduced by Hitchcock \cite{Hitchcock1927} and is variously called the \emph{tensor rank decomposition}, \emph{canonical polyadic decomposition}, \emph{CP decomposition}, \emph{canonical decomposition (CANDECOMP)}, or \emph{parallel factor analysis (PARAFAC)}. The $\Var{S}$-rank of a tensor is usually called \emph{the tensor rank}.
This decomposition has many applications, primarily as a versatile tool for data analysis \cite{SBG2004,BK2025}. 

A Grassmann decomposition $\tensor{A} = \tensor{A}_1 +\dots+ \tensor{A}_r \in \wedge^d \VS{V}$ with $\tensor{A}_i = \vect{v}_i^1 \wedge\dots\wedge \vect{v}_i^d$ can also be expressed as a sum of elementary Segre tensors:
\[
\tensor{A} 
= \sum_{i=1}^d \vect{v}_i^1 \wedge\dots\wedge \vect{v}_i^d
= \frac{1}{d!} \sum_{i=1}^d \sum_{\sigma\in\mathfrak{S}([d])} \sign(\sigma) \vect{v}_i^{\sigma_1} \otimes\dots\otimes \vect{v}_i^{\sigma_d}.
\]
However, this expression is not a tensor rank decomposition of $\tensor{A}$ because it is not of minimal length. The reason is that the tensor rank of an elementary skew-symmetric tensor $\vect{v}_i^1 \wedge\dots\wedge \vect{v}_i^d \simeq \vect{e}_1^\trans{} \wedge\dots\wedge \vect{e}_d^\trans{} \simeq \frac{1}{d!} \det$ is not equal to $d!$. Despite its central role in geometric complexity theory, the precise tensor rank of the determinant $\det$ is not known presently \cite{Landsberg2012,Landsberg2017}. Bounds have been established though. Recently, it was shown that the tensor rank of $\det$ is bounded above by the $d$th Bell number \cite{HGJ2024}, which is strictly less than $d!$ for $d\ge3$. 
In conclusion, there is no clear relation between Grassmann and tensor rank decompositions.

\paragraph{Block term decomposition}
Another well-known rank decomposition is the one with respect to \emph{subspace varieties} \cite{Landsberg2012}, resulting in \emph{block term decompositions} \cite{Lathauwer2008}. 

Recall from \cite{Landsberg2012} that a subspace variety of $\VS{V}_1\otimes\dots\otimes\VS{V}_d$ can be defined as the set 
\[
\Var{S}_{r_1,\dots,r_d} := \{ \tensor{B} \in \VS{W}_1\otimes\dots\otimes\VS{W}_d \mid \VS{W}_i \subset \VS{V}_i \text{ and } \dim \VS{W}_i = r_i, i=1,\dots,d \}.
\]
It is the set of all tensors that are elements of a tensor product subspace $\VS{W}_1\otimes\dots\otimes\VS{W}_d$ where the subspace $\VS{W}_i \subset \VS{V}_i$ has dimension $r_i$. 
These are all tensors whose \emph{multilinear rank} \cite{Hitchcock1927a} is bounded componentwise by $(r_1,\dots,r_d)$. Equivalently, they are tensors whose \emph{Tucker decomposition} \cite{Tucker1966} or \emph{higher-order singular value decomposition} \cite{dLdMV2000} has a core tensor of size no more than $r_1 \times \dots \times r_d$. 

If $V : \FF^{d} \to \VS{V}$ is the linear map that sends the standard basis vector $\vect{e}_i$ of $\FF^d$ to $\vect{v}_i$, then elementary skew-symmetric tensors can be expressed in $\VS{V}\otimes\dots\otimes\VS{V}$ as
\[
\vect{v}_1 \wedge \dots \wedge \vect{v}_d = \frac{1}{d!} \sum_{\sigma\in\mathfrak{S}([d])} \sign(\sigma) \vect{v}_{\sigma_1} \otimes\dots\otimes \vect{v}_{\sigma_d}
= (V \otimes\dots\otimes V)\Bigl( \frac{1}{d!} \tensor{E} \Bigr),
\] 
where $\tensor{E} = \sum_{\sigma\in\mathfrak{S}([d])} \sign(\sigma) \vect{e}_{\sigma_1}\otimes\dots\otimes\vect{e}_{\sigma_d}$ is called the \emph{Levi--Civita tensor} and $V\otimes\dots\otimes V$ is the tensor product of linear maps; see \cref{sec_prelim_tensor}. An elementary skew-symmetric tensor thus admits a Tucker decomposition with factor matrices $(V,\dots,V)$ and the scaled Levi--Civita tensor $\frac{1}{d!} \tensor{E}$ as core tensor. 
It is not difficult to show that the multilinear rank of the latter is $(d,\dots,d)$. Therefore, the elementary skew-symmetric tensors $\Var{G}_{n}^d$ are minimally contained in the subspace variety $\Var{S}_{d,\dots,d} \subset \VS{V}\otimes\dots\otimes\VS{V}$: there is no $k < d$ such that $\Var{G}_n^d \subset \Var{S}_{k,\dots,k}$.

It follows from the foregoing discussion that Grassmann decompositions can be viewed as special, constrained block term decompositions.}

\subsection*{Related algorithms}
To my knowledge, only two concrete algorithms have appeared in the literature dealing with Grassmann decomposition. 

Arrondo, Bernardi, Macias Marquez, and Mourrain \citep{ABMM2019} proposed an extension of the classic apolarity theory \citep{IK1999} to skew-symmetric tensors and used it for Grassmann decomposition of \emph{arbitrary} skew-symmetric tensors in $\wedge^d \VS{V}$ with $d \le 3$ and $n = \dim \VS{V} \le 8$. It is primarily intended for symbolic computations. It can be characterized as a case-by-case analysis based on the existence of certain normal forms. The objective of \changed{the present article} is decidedly more modest: it targets only low-rank, generic tensors. On the other hand, the developed algorithm, \cref{alg_main_computation}, can handle $d=3$ and $n \le 100$, an order-of-magnitude improvement in $n$ over \citep{ABMM2019}.

Recently, Begovi\'c Kova\v{c} and Peri\v{s}a \citep{BP2024} presented a numerical algorithm for the decomposition of Grassmann rank-$1$ skew-symmetric tensors in $\wedge^3 \VS{V}$. It is based on a structure-preserving alternating least-squares approach for tensor rank decomposition. A simpler, noniterative method is presented \changed{for exact decomposition} en route to \cref{alg_main_computation}.

The algorithm developed in this article for Grassmann decomposition, \cref{alg_main_computation}, arose from my understanding of Brooksbank, Kassabov, and Wilson's framework \citep{BKW2024}, based on the talk of M.~Kassabov at the \emph{2024 Tensors: Algebra--Geometry--Applications} conference.
Brooksbank, Kassabov, and Wilson recently introduced in \citep{BKW2024} a general framework for \emph{sparsification} of arbitrary tensors through multilinear multiplication, i.e., a multilinear transformation to a form with few nonzero entries. Their central idea is a Lie algebra construction called a \emph{chisel}, which describes a generalized differentiation \citep{BKW2024}. Notably, a basis of this algebra of chisel derivations can be computed from a system of linear equations that is defined by a multilinear map \citep{BKW2024}. By diagonalizing a generic element of this Lie algebra with an eigenvalue decomposition (EVD) and applying the eigenbases multilinearly to the tensor, a much sparser form is attainable, which depends on the algebraic structure of the chisel. Hence, by appropriately choosing the chisel, different \emph{sparsity patterns} can be detected using the framework of \citep{BKW2024}.

\subsection*{Contributions}

The main contribution of this article is the introduction of a numerical algorithm, \cref{alg_main_computation}, \changed{and a corresponding efficient Julia implementation,} to decompose a \emph{generic} skew-symmetric tensor of small Gr-rank $r \le \frac{n}{d}$ into its unique Grassmann decomposition \cref{eqn_grassmann_decomp}. 
\changed{The algorithm automatically determines the numerical Gr-rank; it is not required to specify the target rank beforehand.} 

The key ingredient of \cref{alg_main_computation} is an eigenvalue decomposition (EVD) of a matrix that \changed{is an element of} the kernel of a natural multilinear map \changed{associated with the tensor.}
\changed{\Cref{alg_main_computation} is designed for \emph{exact Grassmann decomposition}. It is suitable for numerical tensors in the sense that it can tolerate small model violations originating from roundoff errors, as illustrated in the numerical experiments. By contrast, it is not designed for \emph{Grassmann approximation} problems where there are significant deviations from an exact low-rank Grassmann decomposition.}

The proposed \cref{alg_main_computation} follows the high-level framework of Brooksbank, Kassabov, and Wilson \citep{BKW2024}, with one main conceptual difference: I target an algorithm that is capable of recovering the elementary building blocks of one specific family of tensor decompositions, namely Grassmann decompositions, rather than discovering the sparsity pattern of a tensor under a chosen chisel. That is, in the language of \citep{BKW2024}, \changed{we are} looking for an appropriate chisel that can be used to decompose skew-symmetric tensors into their assumed low-rank Grassmann decomposition. The main contribution of this article can thus be characterized alternatively as showing that the \emph{universal chisel} \citep{BKW2024} uncovers Grassmann decompositions of generic low Gr-rank skew-symmetric tensors.

\subsection*{Outline}
\changed{\Cref{sec_preliminaries} recalls standard results from the literature on tensors and the identifiability of Grassmann decompositions. To illustrate Brooksbank, Kassabov, and Wilson's framework \citep{BKW2024} in a more familiar setting, we present} a simple but non-competitive algorithm for tensor rank decomposition \citep{Hitchcock1927a} in \cref{sec_dleto}. Then, \cref{sec_algorithm} develops the main ingredients that constitute the mathematical \cref{alg_main_computation} for low-rank Grassmann decomposition. An efficient numerical algorithm fleshing out the nontrivial technical details of \cref{alg_main_computation} is presented in \cref{sec_numerical_algorithm}. Numerical experiments are featured in \cref{sec_numerical_experiments}, illustrating the computational performance and numerical accuracy of the proposed algorithm.
The article is concluded with some final remarks in \cref{sec_conclusions}.

\section{Preliminaries}\label{sec_preliminaries}

\changed{Standard results from the literature on tensors, algebraic geometry, and the identifiability of Grassmann decompositions are presented in the next subsections. The notation will also be fixed.}

\subsection{Linear and multilinear algebra concepts}\label{sec_prelim_tensor}
Throughout this article, $\VS{W}$ denotes an $m$-dimensional vector space over the real $\FF = \mathbb{R}$ or complex field $\FF = \mathbb{C}$. Similarly, $\VS{V}$ denotes an $n$-dimensional space. 

The dual of a vector space $\VS{V}$ is denoted by $\VS{V}^*$. It is the vector space of linear forms in $\VS{V}$. Throughout the article, the nondegenerate bilinear form $\VS{V}\times\VS{V}\to\FF, (\vect{v},\vect{w})\mapsto  \vect{v}^\trans{}\vect{w} = \sum_{i=1}^n v_i w_i$ is used to identify $\VS{V}$ with $\VS{V}^*$. The matrix space $\VS{V}\otimes\VS{W}^*$ is the linear space of linear maps from $\VS{W}$ to $\VS{V}$.

When discussing metric properties such as orthogonality and approximations, we assume that the vector space $\VS{W}$ is equipped with the standard Frobenius inner product $\langle \vect{x}, \vect{y} \rangle_F = \vect{x}^\herm \vect{y} = \sum_{i=1}^m \overline{x}_i y_i$, where the overline denotes the complex conjugation and $\cdot^\herm$ is the conjugate transpose. 
For real vector spaces $\vect{x}^\herm$ simplifies to $\vect{x}^\trans{}$. 
The induced Frobenius norm is denoted by $\|\vect{x}\|_F$.

\changed{The trace of a linear operator $A : \VS{V} \to \VS{V}$ is denoted by $\mathrm{tr}(A)$ and is defined as the sum of the eigenvalues of $A$. Alternatively, if the matrix $A' = [a_{ij}']$ represents the linear operator $A$ in coordinates with respect to an arbitrary basis, then the trace also equals the sum of the diagonal elements of $A'$: $\mathrm{tr}(A)=\mathrm{tr}(A')=a_{11}' + a_{22}' + \dots + a_{mm}'$, where $m=\dim\VS{V}$.}

The set of all partitions of cardinality $k$ of a set $S$ is denoted by $\binom{S}{k}$.

The vector space of skew-symmetric tensors in $\VS{V}^{\otimes d} := \VS{V}\otimes\dots\otimes\VS{V}$ is denoted by $\wedge^d \VS{V}$. The order \changed{is assumed to satisfy} $3 \le d \le n$, as the other cases are not interesting. Indeed, if $d=1$, then $\wedge^1 \VS{V} = \VS{V}$ and every nonzero vector has Gr-rank $1$.
If $d=2$, then $\wedge^2 \VS{V}$ is the space of skew-symmetric matrices and there are $\infty$-many rank-$r$ Grassmann decompositions for $r \ge 2$ \citep[Remark V.2.9]{Zak1993}. If $d > n$, then $\wedge^d \VS{V} = \{0\}$.

The tensor product of linear maps $A_i : \VS{V} \to \VS{W}$ is the unique linear map \citep[Section 1.16]{Greub1978} with the property that
\[
A_1 \otimes\dots\otimes A_d : \VS{V}^{\otimes d} \to \VS{W}^{\otimes d}, \quad
\vect{v}_1\otimes\dots\otimes\vect{v}_d \mapsto (A_1 \vect{v}_1)\otimes\dots\otimes(A_d \vect{v}_d).
\]
Applying it to a tensor is called a \emph{multilinear multiplication}.
\changed{This operation will be abbreviated to}
\begin{align*}
(A_1,\ldots,A_d) \cdot\tensor{A} &:= (A_1\otimes\dots\otimes A_d)(\tensor{A}),\\
A_k \cdot_k \tensor{A} &:= (\Id_\VS{V},\dots,\Id_\VS{V},A_k,\Id_\VS{V},\dots,\Id_\VS{V}) \cdot \tensor{A},
\end{align*}
where $\Id_\VS{V} : \VS{V} \to \VS{V}$ is the identity map.

Let $\sigma \sqcup \rho$ partition $[d]$ with the cardinality of $\sigma$ being $\sharp\sigma=k$. Then, the $(\sigma;\rho)$-\emph{flattening} \citep{Landsberg2012,Hackbusch2019} of a tensor is the linear isomorphism
\begin{align*}
 \cdot_{(\sigma;\rho)} : \qquad\qquad \VS{V}^{\otimes d} &\to \VS{V}^{\otimes k} \otimes (\VS{V}^{\otimes (d-k)})^*,\\
  \vect{v}_1\otimes\dots\otimes\vect{v}_d &\mapsto (\vect{v}_{\sigma_1}\otimes\dots\otimes \vect{v}_{\sigma_{k}}) (\vect{v}_{\rho_1} \otimes\dots\otimes \vect{v}_{\rho_{d-k}} )^\trans{}.
\end{align*}
The standard flattening $\cdot_{(k; 1,\ldots,k-1,k+1,\ldots,d)}$ will be abbreviated to $\cdot_{(k)}$. Similarly, for $k\ne \ell$, $\cdot_{(k,\ell)} = \cdot_{(k,\ell;\rho)}$ where $\rho$ is an increasingly sorted vector of length $d-2$ whose elements are $[d]\setminus\{k,\ell\}$.

The \emph{multilinear} or \emph{multiplex} rank \citep{Hitchcock1927} of a tensor $\tensor{A}$ is defined as the tuple of ranks of the standard flattenings of $\tensor{A}$:
\(
 \mathrm{mrank}(\tensor{A}) = \bigl( \rank(\tensor{A}_{(1)}),\, \ldots,\, \rank(\tensor{A}_{(d)}) \bigr).
\)

\subsection{Algebraic geometry concepts}
The following results are standard, and can be found, for example, in \citep{CLO2010,Harris1992}. \changed{The material in this subsection is not crucial for understanding this article; however, it is needed to fix the meaning of ``generic'' formally.}

An \emph{algebraic subvariety} of a vector space $\VS{V}$ is the set of points which are the common solutions of a system of polynomial equations, i.e., $\Var{V} = \{ x \in \VS{V} \mid p(x) = 0, \forall p \in I \}$, where $I$ is an ideal of polynomials on $\VS{V}$ defining the variety.
A variety $\Var{V}$ is \emph{irreducible} if $\Var{V}=\Var{U}\cup\Var{W}$ implies that either $\Var{U}\subset\Var{W}$ or $\Var{W}\subset\Var{U}$.
A \emph{Zariski open subset} of $\Var{V}$ is the complement of a \emph{Zariski closed subset} of $\Var{V}$. The Zariski closed subsets of an irreducible variety $\Var{V}$ are all the strict algebraic subvarieties $\Var{W} \subsetneq \Var{V}$. Zariski closed subsets are very small; in particular, a Zariski closed subset of $\Var{V}$ has zero Lebesgue measure on $\Var{V}$. \changed{The Zariski closure of a set $S \subset \Var{V}$ is the smallest algebraic subvariety of $\Var{V}$ that contains $S$. The \emph{dimension} of an irreducible variety $\Var{V}$ is the number $d$ in the longest chain of successively strictly nested Zariski closed subsets:
\[
\emptyset \subsetneq \Var{V}_0 \subsetneq \Var{V}_1 \subsetneq \dots \subsetneq \Var{V}_{d-1} \subsetneq \Var{V}_d=\Var{V};
\]
the empty set has dimension $-1$ by convention, and every finite set of points has dimension $0$.}

\changed{A property $P$ of elements of a variety $\Var{V}$ is called \emph{generic}} if the property fails at most in a strict Zariski closed subset of $\Var{V}$. That is, if $x \in \Var{V}$ does not satisfy property $P$, then there must be nontrivial polynomial equations on $\Var{V}$ defining the failure of property $P$ that $x$ satisfies. For example, a generic matrix $A \in \mathbb{C}^{n\times n}$ is invertible, because singular matrices satisfy the nontrivial polynomial equation $\det(A)=0$. \changed{``Invertibility'' is thus a generic property of the variety of $m\times m$ complex matrices. The word ``generic'' is used in this article exclusively} in the foregoing mathematically precise way.
\changed{To prove a property $P$ is generic on a variety $\Var{V}$, it suffices to show that (i) $x\in\Var{V}$ does not have property $P$ if and only if there exists an ideal of polynomials $I$ such that $p(x) = 0$ for all $p \in I$, and (ii) there exists an $x \in \Var{V}$ with property $P$.}

\changed{A subvariety $\Var{V} \subset \VS{V}$ is a geometric object. At a generic point $x \in \Var{V}$ of an irreducible variety of dimension $d$ there exists a $d$-dimensional affine subspace of $\VS{V}$ with origin at $x$ generated by the tangent vectors of all smooth algebraic curves passing through $x$ in $\VS{V}$. This space is called the \emph{tangent space} to $\Var{V}$ at $x$ and is denoted by $\Tang_{x}{\Var{V}}$. That is, in a formula:
\[
\Tang_x \Var{V} := \mathrm{span}( \left\{ \gamma' (0) \mid \gamma(t)\subset\Var{V} \text{ is smooth algebraic curve with } \gamma(0)=x  \right\}),
\]
where $\gamma'$ denotes the derivative of the curve $\gamma$ with respect to its parameter.}

\subsection{Identifiability of rank decompositions}
\changed{As mentioned previously,} decomposition \cref{eqn_grassmann_decomp} is an instance of a well-studied class of (tensor) rank decompositions \citep{Landsberg2012}. 
\changed{Let $\Var{V} \subset \VS{V}$ be an irreducible variety, and consider the addition map
\begin{equation}\label{eqn_jdp}
 \Sigma_r : (\Var{V}\times\dots\times\Var{V})/\mathfrak{S}([r]) \to \VS{V}, \quad \{\tensor{A}_1,\dots,\tensor{A}_r\} \mapsto \tensor{A}_1 + \dots + \tensor{A}_r.
\end{equation}
We are interested in the \emph{inverse problem} associated with $\Sigma_r$. That is, given a point $\tensor{A}$ in the image of $\Sigma_r$, determine a decomposition in the preimage $\Sigma_r^{-1}(\tensor{A})$. 
A fundamental question concerning this inverse problem is whether it is \emph{well posed}: is there a unique, continuous solution map $\Sigma_r^{-1}$?}

\changed{Well-posedness of inverse problems is considered a necessary requirement for its numerical resolution \cite{Kirsch2011}. Fortunately, the above question has been extensively studied for general rank decompositions with respect to a variety $\Var{V}$.}
The Zariski closure of $\mathrm{Im}(\Sigma_r)$ is the algebraic variety $\sigma_r(\Var{V})$, called the $r$th \emph{secant variety} of the (affine cone over) the variety $\Var{V}$ \citep{BCCGO2018}. The dimension of $\sigma_r(\Var{V})$ is \emph{expected} to equal the minimum of the dimensions of its domain and codomain, i.e., $\min\{ r \dim\Var{V}, \dim \VS{V} \}$. The dimension of $\sigma_r(\Var{V})$ has been relatively well studied during the past two decades \changed{for several varieties $\Var{V}$}.
\changed{If the dimension of $\sigma_r(\Var{V})$} coincides with the dimension of the domain \changed{of $\Sigma_r$, then} this implies that a \emph{generic} element in $\sigma_r(\Var{V})$ has a \emph{finite} number of $\Var{V}$-rank decompositions in its preimage \cite{Landsberg2012,Harris1992}. 
Moreover, \changed{if $\Var{V}$} is a smooth variety, then Massarenti and Mella's wonderful characterization \citep[Theorem 1.5]{MM2024} \changed{implies that the generic fiber of $\Sigma_r$ is a singleton. That is, $\Var{V}$ is generically identifiable. In the case where $\Var{V}$ is the Grassmannian \cite[Lecture 6]{Harris1992}, we have the following result.}

\begin{proposition}\label{prop_wellposed}
\changed{Let $\Var{G}_n^d \subset \VS{V}$ be a Grassmannian with $3 \le d \le n$. If}
  \[
  r < \frac{\dim \wedge^d \VS{V}}{\dim \Var{G}_n^d} - \dim \Var{G}_n^d,
  \]
  then there exists a Zariski open subset $\Var{V} \subset \sigma_r(\Var{G}_n^d)$ such that $\Sigma_r^{-1} : \Var{V} \to (\Var{G}_n^d)^{\times r}/\mathfrak{S}([r])$ is a continuous inverse map of $\Sigma_r$.
\end{proposition}
\begin{proof}
Modulo a few exceptional cases, the $r$th secant variety $\sigma_r(\Var{G}_n^d)$ has the expected dimension for either sufficiently small or large $r$ \citep{CGG2004,Boralevi2013,AOP2011,BDdG2007,TBC2023}. Specifically, Blomenhofer and Casarotti's \citep[Theorem 4.3]{TBC2023} shows that if the Gr-rank of $\tensor{A}$ is less than or equal to
\(
r_\star := \frac{\dim\wedge^d \VS{V}}{\dim \Var{G}_n^d} - \dim \Var{G}_n^d,
\)
then $\dim\sigma_r(\Var{G}_n^d) = r\dim\Var{G}_n^d$. \changed{Since Grassmannians are smooth,} the second part of \citep[Theorem 4.3]{TBC2023} is obtained: A generic Gr-rank-$r$ skew-symmetric tensor $\tensor{A}$ is \emph{identifiable} if $r < r_\star$. 
\changed{Continuity of the inverse map follows from the inverse function theorem; see, e.g., \cite{BV2018}.}
\end{proof}

\section{Chiseling algorithms to detect sparsity patterns in tensor data}\label{sec_dleto}

\changed{Brooksbank, Kassabov, and Wilson \citep{BKW2024} introduced an innovative, general sparsification framework that has the capacity to uncover hidden \emph{sparsity patterns}, i.e., patterns of numerical zeros, in tensors. The motivation underlying their method is the general algebraic principle that the symmetries of a mathematical object, such as a tensor, under transformations, such as group actions, encode its essential properties. Following this guiding principle, \citep{BKW2024} proposes a method to determine the \emph{infinitesimal stabilizer} of a tensor under multilinear multiplication. Reexpressing the tensor in bases corresponding to the invariant subspaces associated with the infinitesimal stabilizer (an element of the \emph{Lie algebra}) will then reveal specific sparsity patterns, depending on the chosen group acting on the tensor. The key details of this framework can be found in \citep{BKW2024}, with further supporting theory in the references of that work. At a high level, to sparsify a tensor $\tensor{A}\in\FF^{r\times r\times r}$, the essential steps of the BKW framework \cite{BKW2024} are as follows:
\begin{enumerate}
\item Choose a suitable linear map $\delta_\tensor{A} : \FF^{r\times r}\times\FF^{r\times r}\times\FF^{r\times r} \to \FF^{r\times r\times r}$.
\item Choose a suitable element $(\dot{X},\dot{Y},\dot{Z})$ from $\ker \delta_\tensor{A} := \{ (X',Y',Z') \mid \delta_\tensor{A}(X',Y',Z') = 0 \}$.
\item Compute the eigendecompositions $\dot{X}=X \Lambda_1 X^{-1}$, $\dot{Y}=Y\Lambda_2Y^{-1}$, and $\dot{Z}=Z\Lambda_3Z^{-1}$.
\item Sparsify the tensor by computing $(X^{-1}, Y^{-1}, Z^{-1}) \cdot \tensor{A}$.
\end{enumerate}}

\changed{The key insight of the present article is that} Brooksbank, Kassabov, and Wilson's sparsification framework \citep{BKW2024} \changed{can be} naturally adopted for the task of decomposing a tensor into elementary tensors. 
\changed{One novel theoretical contribution over \citep{BKW2024} is characterizing under which conditions their framework will correctly recover Grassmann decompositions.}
Before \changed{discussing the Grassmann case, however, we investigate} a simple algorithm for the tensor rank decomposition \changed{of general tensors} to illustrate the main ideas of Brooksbank, Kassabov, and Wilson's sparsification algorithm in a more familiar setting. \changed{My discussion will} use mostly elementary multilinear algebra arguments and avoids the Lie algebra jargon. \changed{To my knowledge, this algorithm was not previously known for tensor rank decomposition of low-rank tensors.}

Assume \changed{for the remainder of this section} that we are given a generic \changed{$r\times r\times r$ tensor of multilinear rank $(r,r,r)$ and tensor rank $r$:}
\begin{equation}\label{eqn_generic_A}
\tensor{A} 
= \sum_{i=1}^r \vect{a}_i \otimes \vect{b}_i \otimes \vect{c}_i
=: \text\textlbrackdbl A, B, C \text\textrbrackdbl, 
\end{equation}
where $A = [\vect{a}_1 \dots \vect{a}_r]$ and likewise for $B$ and $C$ are the \emph{factor matrices}. 
Note the constraint on the ranks and the dimensions of the tensor space: \changed{we assumed essentially that $A$, $B$, and $C$ are invertible matrices.} As per the usual \changed{identifiability} arguments \cite[Theorem 4.1]{COV2014}, the algorithm also applies to tensors which are Tucker compressible to this shape. 
\changed{Observe that in the above shorthand notation, we have the identity
\(
(X, Y, Z) \cdot \text\textlbrackdbl A, B, C \text\textrbrackdbl
= \text\textlbrackdbl X A, Y B, Z C \text\textrbrackdbl
\).}
The next thing we need to consider is 
 
Consider the natural action of the $r\times r$ invertible matrices $\mathrm{GL}(\FF^r)$ on the fixed tensor $\tensor{A}$:
\[
 \phi_\tensor{A} : \mathrm{GL}(\FF^r) \times \mathrm{GL}(\FF^r) \times \mathrm{GL}(\FF^r) \to \FF^{r} \otimes \FF^r\otimes\FF^r, \quad (X,Y,Z) \mapsto (X,Y,Z)\cdot\tensor{A}.
\]
Its differential \changed{at $(I_r,I_r,I_r)$, where $I_r$ is the $r\times r$ identity matrix,} is the \changed{linear} map
\begin{align}\label{eqn_full_dleto}
\begin{split}
\delta_\tensor{A} : \FF^{r\times r} \times \FF^{r\times r} \times \FF^{r\times r}  &\to \FF^r\otimes\FF^r\otimes\FF^r,\\
\quad 
(\dot{X},\dot{Y},\dot{Z}) &\mapsto \dot{X}\cdot_1\tensor{A} + \dot{Y}\cdot_2\tensor{A} + \dot{Z}\cdot_3\tensor{A}.
\end{split}
\end{align}
This differential corresponds to the \emph{universal chisel} in \citep{BKW2024}, \changed{and is the ``suitable linear map'' $\delta_\tensor{A}$ in step $1$ of the BKW framework.}

\changed{Next, we make a crucial observation:} the kernel of $\delta_\tensor{A}$ contains at least the following $2r$-dimensional \changed{linear} subspace \changed{of $\FF^{r\times r}\times\FF^{r\times r}\times\FF^{r\times r}$:}
\begin{equation}\label{eqn_kernel_trd_dleto}
K= \{  (A \diag(\boldsymbol\alpha) A^{-1},\, B \diag(\boldsymbol\beta) B^{-1}, C \diag(\boldsymbol\gamma) C^{-1}) \mid \boldsymbol\alpha+\boldsymbol\beta+\boldsymbol\gamma = 0 \in \FF^{r} \},
\end{equation}
\changed{where $A$, $B$, and $C$ are factor matrices of $\tensor{A}$.}
Indeed, we have by elementary computations that
\begin{align*}
&\delta_{\tensor{A}}(A \diag(\boldsymbol\alpha) A^{-1}, B \diag(\boldsymbol\beta) B^{-1}, C \diag(\boldsymbol\gamma) C^{-1})\\
&\qquad= (A \diag(\boldsymbol\alpha) A^{-1}) \cdot_1 \tensor{A} + (B \diag(\boldsymbol\beta) B^{-1})\cdot_2\tensor{A} + (C \diag(\boldsymbol\gamma) C^{-1})\cdot_3\tensor{A} \\
&\qquad= \text\textlbrackdbl A \diag(\boldsymbol\alpha) A^{-1} A, B, C \text\textrbrackdbl + \text\textlbrackdbl A, B \diag(\boldsymbol\beta) B^{-1} B, C \text\textrbrackdbl + \text\textlbrackdbl A, B, C \diag(\boldsymbol\gamma) C^{-1} C \text\textrbrackdbl \\
&\qquad= \text\textlbrackdbl A \diag(\boldsymbol\alpha), B, C \text\textrbrackdbl + \text\textlbrackdbl A, B \diag(\boldsymbol\beta), C \text\textrbrackdbl + \text\textlbrackdbl A, B, C \diag(\boldsymbol\gamma)\text\textrbrackdbl \\
&\qquad= \text\textlbrackdbl A \diag(\boldsymbol{\alpha} + \boldsymbol\beta + \boldsymbol{\gamma}), B, C \text\textrbrackdbl \\
&\qquad= \text\textlbrackdbl 0, B, C \text\textrbrackdbl\\
&\qquad= 0.
\end{align*}
\changed{This establishes that $K \subset \ker \delta_\tensor{A}$. In fact, we can prove the following result.}

\begin{lemma}
\changed{Let $\tensor{A}$ be as in \cref{eqn_generic_A}, $\delta_\tensor{A}$ as in \cref{eqn_full_dleto}, and $K$ as in \cref{eqn_kernel_trd_dleto}. Then, 
\begin{enumerate}
\item the kernel of $\delta_\tensor{A}$ is \(\ker \delta_\tensor{A} = K,\) and
\item in a generic element $(X,Y,Z)\in K$ the matrices $X$, $Y$, and $Z$ have distinct eigenvalues.
\end{enumerate}}
\end{lemma}
\begin{proof}
\changed{We prove the two properties in the next paragraphs.}

  \paragraph{Property 1.}
\changed{As $A$, $B$, and $C$ are invertible factor matrices, we have for arbitrary $r\times r$ matrices $\dot{X}$, $\dot{Y}$, and $\dot{Z}$ that
\begin{align*}
\delta_\tensor{A}(\dot{X} A^{-1},\dot{Y} B^{-1},\dot{Z} C^{-1})
&= \text\textlbrackdbl \dot{X} A^{-1} A, B, C \text\textrbrackdbl + \text\textlbrackdbl A, \dot{Y} B^{-1} B, C \text\textrbrackdbl + \text\textlbrackdbl A, B, \dot{Z} C^{-1} C \text\textrbrackdbl\\
&= \sum_{i=1}^r \bigl(\dot{\vect{x}}_i \otimes \vect{b}_i \otimes \vect{c}_i + \vect{a}_i \otimes \dot{\vect{y}}_i \otimes \vect{c}_i + \vect{a}_i \otimes \vect{b}_i \otimes \dot{\vect{z}}_i\bigr).
\end{align*}
This last formula is a familiar expression for tangent vectors at generic points on the $r$th secant variety of the \emph{Segre variety} $\Var{S}\subset\FF^{r\times r\times r}$ of rank-$1$ tensors; see, e.g., \cite{AOP2009,Landsberg2012}.
Since $\dot{X}$, $\dot{Y}$, $\dot{Z}$ are arbitrary, the image of $\delta_\tensor{A}$ is 
\[
\mathrm{Im}(\delta_\tensor{A}) 
= \Tang_{\vect{a}_1\otimes\vect{b}_1\otimes\vect{c}_1} \Var{S} + \dots + \Tang_{\vect{a}_r\otimes\vect{b}_r\otimes\vect{c}_r} \Var{S}
= \Tang_\tensor{A} \sigma_r(\Var{S}),
\]
where the last equality exploited the genericity of $\tensor{A} = \vect{a}_1\otimes\vect{b}_1\otimes\vect{c}_1 +\dots+ \vect{a}_r\otimes\vect{b}_r\otimes\vect{c}_r$ and Terracini's lemma \cite{Terracini1911}.}
\changed{Hence, $\delta_\tensor{A}$} surjects onto the tangent space of the $r$th secant variety of the Segre variety $\Var{S}$.
The known nondefectivity results, specifically \citep[Proposition 4.3]{AOP2009}, \changed{entail that $\dim \sigma_r(\Var{S}) = \dim \Tang_\tensor{A} \sigma_r(\Var{S}) = r(3r - 2)$. From this we conclude that 
\[
\rank( \delta_\tensor{A} ) = \dim \mathrm{Im}(\delta_\tensor{A}) = r(3r - 2).
\]
As the dimension of the domain $\FF^{r\times r}\times\FF^{r\times r}\times\FF^{r\times r}$ is $3r^2$, it follows that the kernel of $\delta_\tensor{A}$ is of dimension $2r$. The proof is concluded by the observation that $K \subset \ker \delta_\tensor{A}$ is a $2r$-dimensional subspace, hence we must have equality.}

\paragraph{Property 2.}
\changed{Consider an arbitrary element $(X,Y,Z) \in K$. Then, $X$ has coinciding eigenvalues if and only if the discriminant of the characteristic polynomial is zero \citep[Chapter 12, Section 1.B]{GKZ1994}. Since this discriminant is a polynomial, $X$ having a coinciding eigenvalue occurs only on a Zariski closed subset of the vector space $K$. The analogous observations hold for $Y$ and $Z$. Taking the intersection of these three Zariski closed subsets yields a Zariski closed subset $\Var{Z}$ of $K$ where $X$, $Y$, or $Z$ has some coinciding eigenvalues. 

It only remains to show that $\Var{Z}$ is a \emph{strict} Zariski closed subset of the vector space $K$. For this, it suffices to present one example of an element in $K$ that is not an element of $\Var{Z}$. To this end, take
\[
\boldsymbol\alpha' = (1,2,\dots,r),\, \boldsymbol\beta'=(1,2,\dots,r),\, \text{ and } \boldsymbol\gamma'=(-2,-4,\dots,-2r).
\]
Then $(A\diag(\boldsymbol\alpha')A^{-1}, B\diag(\boldsymbol\beta')B^{-1},C\diag(\boldsymbol\gamma')C^{-1}) \in K \setminus\Var{Z}$. This proves that $\Var{Z}$ is a strict subvariety of $K$. Hence, having distinct eigenvalues is a generic property in $K$.}
\end{proof}

\changed{The previous result entails that all generic elements of $K = \ker \delta_\tensor{A}$ are of the form 
\[
(\dot{X},\dot{Y},\dot{Z}) = (A \diag(\boldsymbol\alpha)A^{-1}, B\diag(\boldsymbol\beta)B^{-1}, C\diag(\boldsymbol\gamma)C^{-1}),\quad
\text{where } {\boldsymbol\alpha}+{\boldsymbol\beta}+{\boldsymbol\gamma}=0,
\]
and all $\alpha_1,\dots,\alpha_r$ are distinct, and likewise for all $\beta_1,\dots,\beta_r$ and all $\gamma_1,\dots,\gamma_r$.
If we take such a generic element $(\dot{X},\dot{Y},\dot{Z})$ in the kernel of $\delta_\tensor{A}$, corresponding to the ``suitable element'' in step $2$ of the BKW framework, then we can compute the eigendecompositions of $\dot{X}$, $\dot{Y}$, and $\dot{Z}$:
\begin{equation}\label{eqn_trd_eigdecs}
\dot{X} = \widetilde{A} \diag(\widetilde{\boldsymbol\alpha}) \widetilde{A}^{-1},\quad 
\dot{Y} = \widetilde{B} \diag(\widetilde{\boldsymbol\beta}) \widetilde{B}^{-1},\quad 
\dot{Z} = \widetilde{C} \diag(\widetilde{\boldsymbol\gamma}) \widetilde{C}^{-1},
\end{equation}
which is the third step in the BKW framework.
As $\dot{X}$, $\dot{Y}$, and $\dot{Z}$ are diagonalizable matrices with distinct eigenvalues, each one of them has a unique set of distinct eigenvalues and a unique set of corresponding one-dimensional invariant subspaces \cite{HJ2013}.
Consequently, there exist \emph{permutation matrices}, i.e., matrices whose columns are a permutation of the identity matrix, $P_1$, $P_2$, and $P_3$, and vectors $\boldsymbol\alpha', \boldsymbol\beta', \boldsymbol\gamma' \in (\FF\setminus\{0\})^r$ such that 
\[
\widetilde{A} = A \diag(\boldsymbol\alpha')^{-1} P_1^\trans{},\quad 
\widetilde{B} = B \diag(\boldsymbol\beta')^{-1} P_2^\trans{},\quad 
\widetilde{C} = C \diag(\boldsymbol\gamma')^{-1} P_3^\trans{}.
\]}

\changed{The final step in the BKW framework consists of} multilinearly multiplying the tensor $\tensor{A}=\text\textlbrackdbl A, B, C\text\textrbrackdbl$ \changed{by the inverses of $\widetilde{A}$, $\widetilde{B}$, and $\widetilde{C}$:
\begin{align*}
  \tensor{S} := 
(\widetilde{A}^{-1}, \widetilde{B}^{-1}, \widetilde{C}^{-1})\cdot\tensor{A}
&= (\widetilde{A}^{-1}, \widetilde{B}^{-1}, \widetilde{C}^{-1})\cdot\text\textlbrackdbl A, B, C \text\textrbrackdbl\\
&= \text\textlbrackdbl (A \diag(\boldsymbol\alpha')^{-1} P_1^\trans{})^{-1} A, (B \diag(\boldsymbol\beta')^{-1} P_2^\trans{})^{-1} B, (C \diag(\boldsymbol\gamma')^{-1} P_3^\trans{})^{-1} C \text\textrbrackdbl\\
&= \text\textlbrackdbl P_1 \diag(\boldsymbol\alpha'), P_2 \diag(\boldsymbol\beta'), P_3 \diag(\boldsymbol\gamma') \text\textrbrackdbl.
\end{align*}
Since the $P_i$'s are permutation matrices, there exist permutations $\pi_i$ of $[r]$ such that the $j$th column of $P_i$ is $\vect{e}_{\pi_i(j)}$. Consequently,} a sparse tensor is obtained that contains only $r$ nonzero elements:
\changed{\begin{equation}\label{eqn_trd_sparsified}
\tensor{S} = \sum_{i=1}^r (\alpha_i' \beta_i' \gamma_i')\cdot \vect{e}_{\pi_1(i)}\otimes\vect{e}_{\pi_2(i)}\otimes\vect{e}_{\pi_3(i)}.
\end{equation}
The nonzero elements of $\tensor{S}$ appear at the indices $(\pi_1(i),\pi_2(i),\pi_3(i))$ for $i=1,\dots,r$. Hence, based on the positions of the $r$ numerically nonzero elements of $\tensor{S}$, we can determine the permutation matrices $P_1$, $P_2$, and $P_3$. This is important because they determine which columns of $\widetilde{A}$, $\widetilde{B}$, and $\widetilde{C}$ belong together. Indeed, from \cref{eqn_trd_sparsified} we find
\[
\tensor{A} 
= (\widetilde{A},\widetilde{B},\widetilde{C})\cdot\tensor{S} 
= \sum_{i=1}^r (\alpha_i' \beta_i' \gamma_i')\cdot \widetilde{\vect{a}}_{\pi_1(i)} \otimes \widetilde{\vect{b}}_{\pi_2(i)}\otimes \widetilde{\vect{c}}_{\pi_3(i)}.
\]
Moreover, the coefficients $(\alpha_i' \beta_i' \gamma_i')$ can be found as the entry $s_{\pi_1(i),\pi_2(i),\pi_3(i)}$ of $\tensor{S}$ because of \cref{eqn_trd_sparsified}. 
Hence, through the BKW framework we can compute from the input tensor $\tensor{A}$ the set
\[
\{ s_{\pi_1(i),\pi_2(i),\pi_3(i)} \widetilde{\vect{a}}_{\pi_1(i)}\otimes\widetilde{\vect{b}}_{\pi_2(i)}\otimes\widetilde{\vect{c}}_{\pi_3(i)} \mid i=1,\dots,r \}
\]
of} rank-$1$ tensors from $\tensor{A}$'s tensor rank decomposition, \changed{where the individual vectors are obtained from \cref{eqn_trd_eigdecs} and their correct permutations and coefficients from \cref{eqn_trd_sparsified}.}

While the above algorithm is valid and to my knowledge novel, it does not appear to offer advantages over pencil-based algorithms, such as \citep{SY1980,Lorber1985,SK1990,LRA1993,FFB2001,DdL2014,DdL2017,TeV2022}. One of the main reasons is that determining the kernel of $\delta_{\tensor{A}}$ is much more expensive than the $\mathcal{O}(r^4)$ cost for a pencil-based algorithm. However, the idea of this algorithm, based on the sparsification algorithm of \citep[Section 5]{BKW2024}, transfers to \changed{other rank decompositions as well}.

\section{An algorithm for Grassmann decomposition}\label{sec_algorithm}
Returning to the main setting, \changed{we can} apply the template from \cref{sec_dleto}, which follows the framework in \citep[Section 5]{BKW2024}, to design an algorithm for Grassmann decomposition.

\subsection{Reduction to concise spaces}
As in \cref{sec_dleto}, the main strategy applies to tensors that are \emph{concise}, i.e., tensors whose components of the multilinear rank coincide with the dimension of the corresponding vector space. We show in this subsection that a Grassmann decomposition of a nonconcise tensor $\tensor{T}$ can be obtained from a Grassmann decomposition of the \emph{Tucker compression} \citep{Tucker1966} of $\tensor{T}$ to a concise tensor space.

First, two results about flattenings and the generic multilinear rank of skew-symmetric tensors \changed{are presented}, which are certainly known to the experts even though I could not locate a precise reference in the literature. \changed{Some explicit results on the multilinear rank of skew-symmetric tensor are presented in \cite[Section 2]{BK2017}.}
Flattenings of skew-symmetric tensors can be characterized as follows.

\begin{lemma}[Flattening] \label{lem_wedge_flattening}
Let $\sigma \sqcup \rho = [d]$ with $\sharp\sigma=k$ be a partition, and let $\pi=(\sigma, \rho)$ denote the concatenation of $\sigma$ and $\rho$.
Then, we have
\[
 (\vect{v}_1 \wedge\dots\wedge \vect{v}_d)_{(\sigma;\rho)}
 = \sign(\pi) \binom{d}{k}^{-1} \sum_{\eta\in\binom{[d]}{k}} (\vect{v}_{\eta_1}\wedge\dots\wedge \vect{v}_{\eta_k}) (\vect{v}_{\theta_1} \wedge\dots\wedge \vect{v}_{\theta_{d-k}})^\trans{},
\]
where $\theta = \changed{\eta^\perp} := [d]\setminus\eta$, sorted increasingly,
which \changed{is an element of} $(\wedge^k \VS{V}) \otimes (\wedge^{d-k} \VS{V})^*$.
\end{lemma}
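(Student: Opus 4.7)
The plan is to reduce to the standard-flattening case via permutation invariance of the wedge, then expand $\vect{v}_1 \wedge \cdots \wedge \vect{v}_d$ by its definition and regroup terms according to which $k$-subset of indices lands in the ``row'' block of the flattening.

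First I would observe that the flattening $\cdot_{(\sigma;\rho)}$ factors as a permutation $\pi^\ast : \VS{V}^{\otimes d} \to \VS{V}^{\otimes d}$ that reorders the tensor factors according to $\pi = (\sigma,\rho)$, composed with the standard flattening $\cdot_{(\sigma_0;\rho_0)}$ where $\sigma_0 = (1,\dots,k)$ and $\rho_0 = (k+1,\dots,d)$. Since the wedge product is alternating under reordering of factors, $\pi^\ast(\vect{v}_1 \wedge\cdots\wedge \vect{v}_d) = \sign(\pi)\,\vect{v}_1 \wedge\cdots\wedge\vect{v}_d$ (which can be checked by a change of summation variable $\tau \mapsto \tau\circ\pi$ in the definition of the wedge). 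This accounts for the prefactor $\sign(\pi)$ and reduces the claim to the case of the standard flattening.

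Next I would expand the wedge as $\sum_\tau \sign(\tau)\,\vect{v}_{\tau_1}\otimes\cdots\otimes\vect{v}_{\tau_d}$, apply $\cdot_{(\sigma_0;\rho_0)}$ term by term, and partition the resulting sum over $\tau\in\mathfrak{S}([d])$ by the combinatorial type $(\eta,\theta)$, where $\eta$ is the increasingly sorted version of $(\tau_1,\dots,\tau_k)$ in $\binom{[d]}{k}$ and $\theta = [d]\setminus\eta$. For each fixed type, the compatible $\tau$ are parameterized by $(\mu,\nu) \in \mathfrak{S}([k])\times\mathfrak{S}([d-k])$ via $\tau_i = \eta_{\mu_i}$ and $\tau_{k+j} = \theta_{\nu_j}$, and the sign decomposes as $\sign(\tau) = \sign(\kappa_{\eta,\theta})\,\sign(\mu)\,\sign(\nu)$, where $\kappa_{\eta,\theta}$ is the shuffle permutation sending $(1,\dots,d)$ to $(\eta_1,\dots,\eta_k,\theta_1,\dots,\theta_{d-k})$. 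The inner sums over $\mu$ and $\nu$ then collapse back into the wedges $\vect{v}_{\eta_1}\wedge\cdots\wedge\vect{v}_{\eta_k}$ and $\vect{v}_{\theta_1}\wedge\cdots\wedge\vect{v}_{\theta_{d-k}}$, leaving a sum indexed by $\eta \in \binom{[d]}{k}$.

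The main obstacle I anticipate is the careful bookkeeping of the shuffle signs $\sign(\kappa_{\eta,\theta})$ against the combinatorial prefactor $\binom{d}{k}^{-1}$: those signs vary with $\eta$ and do not cancel naively, so reconciling the sum with the single uniform coefficient stated in the lemma requires either absorbing a shuffle sign into the normalization used for the right-hand side wedges, or invoking an additional symmetry that produces the $\binom{d}{k}^{-1}$ multiplicity. Once this accounting is resolved, the identity follows by undoing the reduction to the standard flattening and multiplying by $\sign(\pi)$.
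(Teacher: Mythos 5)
Your route is the same as the one the paper's own proof takes: peel off $\sign(\pi)$ using skew-symmetry to reduce to the standard flattening, expand via \cref{eqn_wedge_product}, group the permutations $\tau$ by the $k$-subset $\eta$ occupying the row block, and recollapse the two inner sums into wedge products. Moreover, your sign bookkeeping is right, and the ``obstacle'' you flag at the end is not a detail you failed to push through --- it is a genuine discrepancy with the stated formula. Carrying $\sign(\tau)=\sign(\kappa_{\eta,\theta})\sign(\mu)\sign(\nu)$ through the computation yields
\[
 (\vect{v}_1 \wedge\dots\wedge \vect{v}_d)_{(\sigma;\rho)}
 = \sign(\pi) \sum_{\eta\in\binom{[d]}{k}} \sign(\kappa_{\eta,\theta})\, (\vect{v}_{\eta_1}\wedge\dots\wedge \vect{v}_{\eta_k}) (\vect{v}_{\theta_1} \wedge\dots\wedge \vect{v}_{\theta_{d-k}})^T
\]
under the convention \cref{eqn_wedge_product}: the shuffle signs do \emph{not} cancel, and no factor $\binom{d}{k}^{-1}$ appears. (Sanity check with $d=3$, $k=1$: one gets $\vect{v}_1(\vect{v}_2\wedge\vect{v}_3)^T-\vect{v}_2(\vect{v}_1\wedge\vect{v}_3)^T+\vect{v}_3(\vect{v}_1\wedge\vect{v}_2)^T$, which is exactly the alternating formula the paper itself uses in the proof of \cref{prop_rank1}.) The uniform coefficient $\binom{d}{k}^{-1}=k!(d-k)!/d!$ materializes only if one normalizes the wedge by $\frac{1}{d!}$ (the antisymmetrizer convention), and even then the $\eta$-dependent signs $\sign(\kappa_{\eta,\theta})$ persist; there is no additional symmetry that removes them.

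For comparison, the paper's proof follows your exact outline but resolves the crux by fiat: its second displayed equality silently drops the $\sign(\varsigma)$ factors and inserts a $\frac{1}{d!}$, after which the inner sums over $\mathfrak{S}(\eta)$ and $\mathfrak{S}(\eta^\perp)$ are symmetrizations rather than alternations, so the concluding ``we recognize the scaled wedge products'' does not go through as written. None of this matters downstream --- the lemma is only used through column spans, ranks, equality of flattenings up to sign, and Gram matrices $\tensor{A}_{(k)}\tensor{A}_{(k)}^H$, all of which are insensitive to per-summand signs and a global constant --- but to turn your proposal into a complete and correct proof you should keep the shuffle signs inside the sum (and drop the $\binom{d}{k}^{-1}$, or else adopt the normalized wedge consistently throughout), rather than try to reconcile the computation with the coefficient as stated.
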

\begin{proof}
This is a straightforward computation. Indeed,
\begin{align*}
(\vect{v}_1 \wedge\dots\wedge \vect{v}_d)_{(\sigma;\rho)}
&= \sign(\pi) (\vect{v}_1 \wedge\dots\wedge \vect{v}_d)_{(1,\ldots,k;k+1,\ldots,d)}\\
&= \frac{1}{d!} \sign(\pi) \sum_{\varsigma\in\mathfrak{S}([d])} (\vect{v}_{\varsigma_1}\otimes\dots\otimes\vect{v}_{\varsigma_k})(\vect{v}_{\varsigma_{k+1}}\otimes\dots\otimes\vect{v}_{\varsigma_d})^\trans{}\\
&= \frac{1}{d!} \sign(\pi) \sum_{\eta \in \binom{[d]}{k}} \sum_{s \in \mathfrak{S}(\eta)} \sum_{r\in\mathfrak{S}(\eta^\perp)} (\vect{v}_{s_1}\otimes\dots\otimes\vect{v}_{s_k})(\vect{v}_{r_{1}}\otimes\dots\otimes\vect{v}_{r_{d-k}})^\trans{}\\
&= \frac{1}{d!} \sign(\pi) \sum_{\eta \in \binom{[d]}{k}} \sum_{s \in \mathfrak{S}(\eta)} \left( (\vect{v}_{s_1}\otimes\dots\otimes\vect{v}_{s_k}) \sum_{r\in\mathfrak{S}(\eta^\perp)} (\vect{v}_{r_{1}}\otimes\dots\otimes\vect{v}_{r_{d-k}})^\trans{} \right),
\end{align*} 
where the first equality is \cref{lem_elementary_props}(2) and the last equality exploited the bilinearity of the tensor product of two factors. In the final expression, by once more exploiting \cref{eqn_wedge_product}, we quickly recognize the scaled wedge products from the statement of lemma.
This concludes the proof.
\end{proof}

\changed{The previous result defines flattenings for elementary skew-symmetric tensors as a map from $\Var{G}_n^d \to (\wedge^k \VS{V})\otimes(\wedge^{d-k} \VS{V})^*$. By \cref{lem_basis}, every skew-symmetric tensor can be written uniquely as a linear combination of elementary skew-symmetric tensors. Therefore, the map defined in \cref{lem_wedge_flattening} can be extended linearly to a linear map $\cdot_{(\sigma;\rho)} : \wedge^d \VS{V} \to (\wedge^k \VS{V})\otimes(\wedge^{d-k} \VS{V})^*$.}

\changed{As was already proved in \cite[Section 2]{BK2017}, it follows from the structure of the (standard) flattenings with $\sharp\sigma = 1$ that the multilinear rank of a skew-symmetric tensor is always of the form $\mathrm{mrank}(\tensor{T})=(k,\dots,k)$ for some integer $k$.}

A standard result about the tensor rank decomposition \citep{Hitchcock1927a} of tensors in $\VS{V}_1 \otimes\dots\otimes \VS{V}_d$ is that rank-$r$ tensors with $r \le \min_i \dim \VS{V}_i$ have multilinear rank $(r,\ldots,r)$ in a Zariski open subset of the algebraic variety that is the (Zariski) closure of the set of rank-$r$ tensors \citep{Landsberg2012}. An analogous statement holds for rank-$r$ Grassmann decompositions in $\wedge^d \VS{V}$.

\begin{lemma}[Multilinear rank]\label{lem_full_mlrank}
Let \(\tensor{T} \in \wedge^d \VS{W}\) be a skew-symmetric tensor of Gr-rank $r$ with $dr \le \dim \VS{W}$. Then, the multilinear rank of $\tensor{T}$ is bounded componentwise by $(dr,\ldots,dr)$. If $\tensor{T}$ is generic, then $\mathrm{mrank}(\tensor{T})=(dr,\dots,dr)$.
\end{lemma}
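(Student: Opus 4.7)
\emph{Proof plan.} For the upper bound I would use a direct containment argument. Writing $\tensor{T}=\sum_{i=1}^r\vect{v}_{i,1}\wedge\cdots\wedge\vect{v}_{i,d}$ and setting $\VS{U}:=\mathrm{span}\{\vect{v}_{i,j}\}_{i,j}\subset\VS{W}$, we have $\tensor{T}\in\wedge^d\VS{U}\subset\wedge^d\VS{W}$, so the column span of every flattening $\tensor{T}_{(k)}$ sits inside $\VS{U}$, yielding $\mathrm{rank}(\tensor{T}_{(k)})\le\dim\VS{U}\le dr$.

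For the generic statement, the condition ``$\mathrm{rank}(\tensor{T}_{(k)})\ge dr$'' is Zariski-open on $\wedge^d\VS{W}$ (it is the nonvanishing of some $dr\times dr$ minor of $\tensor{T}_{(k)}$), and ``the $dr$ factor vectors $\{\vect{v}_{i,j}\}_{i,j}$ are linearly independent'' is a nonempty Zariski-open condition on the parameter space of Gr-rank-$r$ decompositions since $dr\le\dim\VS{W}$. It therefore suffices to verify that whenever the $dr$ factor vectors are linearly independent, every $\tensor{T}_{(k)}$ attains rank exactly $dr$. By \cref{lem_wedge_flattening} applied with $k=1$,
\[
\tensor{T}_{(1)}=\tfrac{1}{d}\sum_{i=1}^r\sum_{j=1}^d \vect{v}_{i,j}\,\mathbf{w}_{i,j}^T, \quad \mathbf{w}_{i,j}:=\vect{v}_{i,1}\wedge\cdots\widehat{\vect{v}_{i,j}}\cdots\wedge\vect{v}_{i,d}\in\wedge^{d-1}\VS{W},
\]
and the image of this map coincides with $\VS{U}$ precisely when the $dr$ tensors $\mathbf{w}_{i,j}$ are linearly independent in $\wedge^{d-1}\VS{W}$, because dual pairing then isolates any prescribed $\vect{v}_{i,j}$ in the column span. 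The same conclusion transfers to every $\tensor{T}_{(k)}$ up to an overall sign $(-1)^{k-1}$ coming from \cref{lem_wedge_flattening}.

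The main step is therefore the linear independence of $\{\mathbf{w}_{i,j}\}$. Setting $\VS{V}_i:=\mathrm{span}(\vect{v}_{i,1},\ldots,\vect{v}_{i,d})$, the independence hypothesis makes the sum $\VS{V}_1+\cdots+\VS{V}_r$ direct inside $\VS{W}$; extending $\{\vect{v}_{i,j}\}$ to a basis of $\VS{W}$ then realizes each $\wedge^{d-1}\VS{V}_i$ as the span of wedges whose basis indices lie entirely inside the disjoint block assigned to $i$, so the subspaces $\wedge^{d-1}\VS{V}_i\subset\wedge^{d-1}\VS{W}$ also sit in direct sum. Inside each $d$-dimensional space $\wedge^{d-1}\VS{V}_i$, the $d$ vectors $\mathbf{w}_{i,1},\ldots,\mathbf{w}_{i,d}$ are, up to sign, the standard wedge basis and therefore independent; combining these two observations gives independence of all $dr$ elements $\mathbf{w}_{i,j}$. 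I expect the only nontrivial point to be this multilinear-algebra step; everything else amounts to bookkeeping of signs and indices and to invoking the containment of Gr-rank-$r$ decompositions inside $\wedge^d\VS{U}$.
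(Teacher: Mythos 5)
Your proposal is correct and follows essentially the same route as the paper: the upper bound comes from the flattening formula of \cref{lem_wedge_flattening}, and the generic equality is obtained by noting that rank deficiency is a Zariski-closed condition and exhibiting a witness of full flattening rank. The only difference is that the paper verifies the witness property for one explicit tensor built from standard basis vectors, whereas you prove the marginally stronger statement that any decomposition with linearly independent factor vectors works; the substance of the argument (direct-sum decomposition of the $(d-1)$-fold wedges) is the same.
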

\begin{proof}
Since a Grassmann decomposition of a tensor is a linear combination of elementary tensors, it follows from the expression in \cref{lem_wedge_flattening} that the rank of $\tensor{A}_{(k)}$ is upper bounded by $rd$.

A tensor whose multilinear rank is strictly less than $rd$ in some factor $k$, i.e., $\rank(\tensor{A}_{(k)}) < rd$, satisfies a system of polynomial equations: all the $rd \times rd$ minors of $\tensor{A}_{(k)}$ vanish in this case. These equations define a strict Zariski closed subvariety, because the tensor
$\tensor{E} = \sum_{i=1}^r \vect{e}_{d(i-1)+1} \wedge\dots\wedge \vect{e}_{d(i-1)+d}$, where \changed{$(\vect{e}_1,\dots,\vect{e}_m)$ is any basis of $\VS{W}$}, does not satisfy it. Indeed, $\frac{1}{(d-1)!} \tensor{E}_{(k)}$ is equal to
\[
 \sum_{i=1}^r \sum_{k=1}^d \epsilon_i \vect{e}_{d(i-1)+k} (\vect{e}_{d(i-1)+1} \wedge\dots\wedge \vect{e}_{d(i-1)+k-1}\wedge\vect{e}_{d(i-1)+k+1} \wedge\dots\wedge \vect{e}_{d(i-1)+d})^\trans{},
\]
where $\epsilon_i \in \{-1,1\}$ are left unspecified. This expression specifies a matrix decomposition of the form $V W^\trans{}$, where $V$ is an $m \times rd$ matrix whose columns contain the first $rd \le m$ basis vectors $\vect{e}_i$ and $W$ is an $\binom{m}{d-1} \times rd$ matrix whose columns contain the wedge products. The matrix $W$ has rank $rd \le m \le \binom{n}{d-1}$ because its columns contain a subset of the basis vectors of $\wedge^{d-1} \VS{W}$. It follows that $\mathcal{E}_{(k)} = VW^\trans{}$ has rank $rd$.
Since the standard skew-symmetric flattenings are all equal up to sign by \cref{lem_wedge_flattening}, this concludes the proof.
\end{proof}
 
\changed{For Gr-rank $1$ we can even be a bit more precise.

\begin{corollary}
The multilinear rank of every elementary skew-symmetric tensor (i.e., Gr-rank $1$) in $\wedge^d \VS{W}$ with $d \le \dim \VS{W}$ is $(d,\dots,d)$.
\end{corollary}
\begin{proof}
  Recall that multilinear rank is invariant under multilinear multiplication with invertible matrices \cite{Landsberg2012}.
Recall furthermore that $\Var{G}_m^d$ is a homogeneous space: for every $\tensor{T} \in \Var{G}_m^d$ there exists an invertible matrix $A \in \FF^{m\times m}$ such that $\tensor{T} = (A,\dots,A) \cdot \tensor{E}$, where $\tensor{E}$ is the tensor from the proof of \cref{lem_full_mlrank} with $r=1$. This homogeneity is an easy consequence of \cref{lem_mult_skew}.
Since the multilinear rank of $\tensor{E}$ is $(d,\dots,d)$ by the proof of \cref{lem_full_mlrank}, this concludes the proof.
\end{proof}}

Interpreted differently, \cref{lem_full_mlrank} states that if $m=\dim \VS{W}$ is large relative to the Gr-rank $r$ of a skew-symmetric tensor $\tensor{T} \in \wedge^d \VS{W}$, then there exists a subspace $\VS{V} \subset \VS{W}$ with $n = \dim \VS{V} \le dr$ such that $\tensor{T} \in \wedge^d \VS{V}$. Similar to the case of tensor rank decompositions, we can look for a Grassmann decomposition of $\tensor{T}$ inside the concise tensor space $\wedge^d \VS{V}$. This is the next standard result.

\begin{lemma}[Compression]\label{lem_compression}
Let $\tensor{T} \in \wedge^d \VS{W}$ be a Gr-rank-$r$ skew-symmetric tensor. If there exists a strict subspace $\VS{V} \subset \VS{W}$ such that $\tensor{T} \in \wedge^d \VS{V}$, then at least one of $\tensor{T}$'s Grassmann decompositions \changed{is contained in this space}:
\[
 \tensor{T}
 = \sum_{i=1}^r \vect{v}_{i}^1 \wedge\dots\wedge \vect{v}_i^d, \quad\text{where } \forall i, k: \vect{v}_i^k \in \VS{V}.
\]
If $\tensor{T}\in\wedge^d \VS{W}$ has a unique Grassmann decomposition, then it is necessarily \changed{an element of} $\wedge^d \VS{V}$.
\end{lemma}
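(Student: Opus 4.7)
The plan is to transport any Grassmann decomposition of $\tensor{T}$ living in $\wedge^d \VS{W}$ into $\wedge^d\VS{V}$ by applying a linear projection $\VS{W}\to\VS{V}$ that pins $\tensor{T}$ in place, and to argue that this projection cannot reduce the number of elementary summands.

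First I would fix any vector-space splitting $\VS{W} = \VS{V} \oplus \VS{U}$ and let $P : \VS{W}\to\VS{W}$ be the projection onto $\VS{V}$ along $\VS{U}$, so that $P|_\VS{V} = \Id_\VS{V}$ and $\mathrm{Im}(P)=\VS{V}$. Consider the multilinear multiplication $\Phi := (P,\ldots,P)\cdot$ on $\VS{W}^{\otimes d}$. From the definition \cref{eqn_wedge_product} of the wedge product one reads off that $\Phi(\vect{w}_1\wedge\cdots\wedge\vect{w}_d) = (P\vect{w}_1)\wedge\cdots\wedge(P\vect{w}_d)$, so $\Phi$ restricts to an endomorphism of $\wedge^d\VS{W}$ that carries elementary skew-symmetric tensors to elementary skew-symmetric tensors (possibly zero). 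Because $P$ is the identity on $\VS{V}$, this restriction fixes $\wedge^d\VS{V}$ pointwise, and in particular the hypothesis $\tensor{T}\in\wedge^d\VS{V}$ gives $\Phi(\tensor{T})=\tensor{T}$.

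With $\Phi$ in hand, I would take an arbitrary Gr-rank-$r$ decomposition $\tensor{T} = \sum_{i=1}^r \vect{w}_i^1 \wedge\cdots\wedge \vect{w}_i^d$ in $\wedge^d\VS{W}$ and apply $\Phi$ to both sides to get
\[
\tensor{T} \;=\; \sum_{i=1}^r (P\vect{w}_i^1)\wedge\cdots\wedge(P\vect{w}_i^d),
\]
a sum of at most $r$ elementary skew-symmetric tensors all of whose factors lie in $\VS{V}$. If any summand on the right vanished, $\tensor{T}$ would admit a Grassmann decomposition with strictly fewer than $r$ elementary terms, contradicting the minimality of $r$; hence every summand is nonzero and this is a bona fide Gr-rank-$r$ Grassmann decomposition of $\tensor{T}$ inside $\wedge^d\VS{V}$, which is the first claim. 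The second claim follows at once: if $\tensor{T}$'s Grassmann decomposition is unique, then the original and the projected decompositions must coincide, so the original elementary summands already lie in $\wedge^d \VS{V}$.

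There is no genuine obstacle in this argument; the only conceptual content is that a linear projection onto a subspace induces, via multilinear multiplication, an endomorphism of the exterior power that preserves elementary skew-symmetric tensors and fixes the exterior power of the subspace pointwise, both of which are immediate from \cref{eqn_wedge_product}.
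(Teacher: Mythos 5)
Your proposal is correct and follows essentially the same route as the paper: apply the multilinear multiplication by a projection $P:\VS{W}\to\VS{W}$ with image $\VS{V}$, note that it fixes $\tensor{T}\in\wedge^d\VS{V}$ while sending every elementary summand into $\wedge^d\VS{V}$, and conclude. Your additional observations---that no projected summand can vanish (by minimality of $r$) and that uniqueness forces the original summands to coincide with the projected ones---are correct refinements of details the paper leaves implicit.
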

\begin{proof}
Let $P : \VS{W} \to \VS{V}$ be a projection. Since $P^{\otimes d}$ is a projection from $\VS{W}^{\otimes d}$ to $\VS{V}^{\otimes d}$, the subspace $\wedge^d \VS{V} \subset \VS{W}^{\otimes d}$ is preserved under the action of $P^{\otimes d}$. Hence, for every rank-$r$ Grassmann decomposition, we have
\[
 (P\otimes\dots\otimes P)(\tensor{T})
 = \sum_{i=1}^r (P \vect{v}_i^1) \wedge\dots\wedge (P \vect{v}_i^d) = \tensor{T}.
\]
This concludes the proof as $P \vect{v}_i^k \in \VS{V}$.
\end{proof}

The key implication of \cref{lem_compression} is that we can restrict our attention to concise tensor spaces. \changed{If $\tensor{T} \in \wedge^d \VS{W}$ is viewed as a tensor in $\VS{W}\otimes\dots\otimes\VS{W}$, then the T-HOSVD algorithm from \cite{BK2017} can be used to obtain a concise representation of $\tensor{T}$.}

Based on the foregoing observations, we can decompose Gr-rank-$1$ tensors with a simpler method than the ones of \citep{BP2024}.

\begin{lemma}[Decomposing elementary tensors]\label{prop_rank1}
Let $\tensor{T} \in \wedge^d \VS{W}$ be an elementary skew-symmetric tensor. Then, there exists a nonzero scalar $\alpha\in\FF$ so that $\tensor{T} = \alpha \vect{u}_1 \wedge\dots\wedge \vect{u}_d$, where $(\vect{u}_1, \dots, \vect{u}_d)$ is a basis of the column span of $\tensor{T}_{(1)}$.
\end{lemma}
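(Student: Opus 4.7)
The plan is to identify the column span of $\tensor{T}_{(1)}$ with the linear span of the vectors appearing in any wedge expression of $\tensor{T}$, and then invoke the standard Plücker fact (\cref{cor_det_transform}) that bases of the same subspace produce elementary tensors differing only by a nonzero scalar.

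First I would write $\tensor{T} = \vect{v}_1 \wedge \cdots \wedge \vect{v}_d$ for some $\vect{v}_i \in \VS{W}$. Because elementary skew-symmetric tensors are by definition nonzero (elements of $\Var{G}_n^d$), the vectors $\vect{v}_1,\ldots,\vect{v}_d$ must be linearly independent; otherwise the wedge would vanish by the alternating property \cref{eqn_alternating}. Set $\VS{U} := \mathrm{span}(\vect{v}_1,\ldots,\vect{v}_d)$, which has dimension $d$.

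Next I would compute the column span of $\tensor{T}_{(1)}$. Applying \cref{lem_wedge_flattening} with $k=1$, $\sigma=(1)$, $\rho=(2,\ldots,d)$ expresses
\[
 \tensor{T}_{(1)} = \frac{1}{d}\sum_{j=1}^d \varepsilon_j\, \vect{v}_j\, \omega_j^T,
 \qquad \omega_j := \vect{v}_1\wedge\dots\wedge\widehat{\vect{v}_j}\wedge\dots\wedge \vect{v}_d,
\]
for signs $\varepsilon_j \in \{\pm 1\}$. Since this is a sum of rank-one terms with left factors $\vect{v}_j$, the column span of $\tensor{T}_{(1)}$ is contained in $\VS{U}$. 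For the reverse inclusion, I would use that the $\omega_j \in \wedge^{d-1}\VS{U}$ are linearly independent: up to sign, they form the standard wedge basis of the one-dimensional-less wedge power of $\VS{U}$, corresponding to omitting one of the basis vectors from the independent tuple $(\vect{v}_1,\dots,\vect{v}_d)$. Hence there exist dual elements $\vect{x}_j \in (\wedge^{d-1}\VS{V})^*$ with $\omega_i^T \vect{x}_j = \delta_{ij}$, giving $\tensor{T}_{(1)}\vect{x}_j = \frac{\varepsilon_j}{d}\vect{v}_j \in \mathrm{colspan}(\tensor{T}_{(1)})$. Thus each $\vect{v}_j$ lies in the column span, so the column span equals $\VS{U}$.

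Finally, let $(\vect{u}_1,\ldots,\vect{u}_d)$ be any basis of $\VS{U} = \mathrm{colspan}(\tensor{T}_{(1)})$. Writing $\vect{v}_j = \sum_i M_{ij} \vect{u}_i$ for an invertible change-of-basis matrix $M \in \FF^{d\times d}$, \cref{cor_det_transform} (the determinantal transformation law for elementary wedges) gives $\vect{v}_1 \wedge \cdots \wedge \vect{v}_d = \det(M)\, \vect{u}_1 \wedge \cdots \wedge \vect{u}_d$, so $\alpha := \det(M) \ne 0$ is the required scalar. No real obstacle arises; the only mildly delicate point is the linear independence of the $\omega_j$, which rests on the linear independence of the $\vect{v}_j$ forced by $\tensor{T}\ne 0$.
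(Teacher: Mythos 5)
Your proposal is correct and follows essentially the same route as the paper's proof: both express $\tensor{T}_{(1)}$ as a sum of rank-one terms $\vect{v}_j\,\omega_j^T$, establish that the $\vect{v}_j$ and the complementary wedges $\omega_j$ are each linearly independent (the latter by completing to a basis of $\VS{W}$ and invoking \cref{lem_basis}), conclude that the column span equals $\mathrm{span}(\vect{v}_1,\dots,\vect{v}_d)$, and finish with \cref{cor_det_transform}. The only cosmetic differences are your explicit dual functionals $\vect{x}_j$ (the paper just states $X$ has full column rank) and a harmless typo ($(\wedge^{d-1}\VS{V})^*$ should be $(\wedge^{d-1}\VS{W})^*$).
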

\begin{proof}
If $\tensor{T} = \vect{w}_1\wedge\dots\wedge\vect{w}_d \in \wedge^d \VS{W} \simeq \wedge^d \FF^{m}$, then
\[
 \tensor{T}_{(1)} = \sum_{i=1}^d (-1)^{i-1} \vect{w}_i (\vect{w}_1 \wedge\dots\wedge \vect{w}_{i-1}\wedge\vect{w}_{i+1}\wedge\dots\wedge \vect{w}_d)^\trans{} =: W X^\trans{}.
\]
The matrix $W \in \FF^{m \times d}$ has linearly independent columns, for otherwise $\tensor{T}=0$ by \cref{lem_elementary_props}. The matrix $X \in \FF^{\binom{m}{d-1} \times d}$ has linearly independent columns as well, because they form a subset of the induced basis vectors (see \cref{lem_basis}) of $\wedge^{d-1} \VS{W}$ using any completion of $(\vect{w}_1,\dots,\vect{w}_d)$ to a basis of $\VS{W}$. Such a completion exists because $d \le m$, for otherwise $\tensor{T}=0$ by \cref{lem_elementary_props}(1). Consequently, the column span of $\tensor{T}_{(1)}$ is $\VS{U} = \mathrm{span}(\vect{w}_1,\dots,\vect{w}_d)$.
It follows from \cref{cor_det_transform} that any basis $(\vect{u}_1,\dots,\vect{u}_d)$ of $\VS{U}$ satisfies $\tensor{T} = \alpha \vect{u}_1 \wedge\dots\wedge \vect{u}_d$.
\end{proof}

The nonzero scalar $\alpha\in\FF$ that was left unspecified can be determined by solving a linear system, for example by looking at just one of the tensor's nonzero coordinates.

\subsection{The key ingredients}

I claim that we can recover the Grassmann decomposition of a generic concise tensor $\tensor{A} \in \wedge^d \VS{V}$ whose Gr-rank is equal to $r = \frac{1}{d}\dim \VS{V}$, so $\mathrm{mrank}(\tensor{A})=(dr,\dots,dr)=(n,\dots,n)$, from the kernel of the differential of the multilinear map
\[
 \phi_{\tensor{A}} : \mathrm{Aut}(\VS{V}) \to \wedge^d \VS{V}, \quad X \mapsto (X, \dots, X)\cdot \tensor{A},
\]
where $\mathrm{Aut}(\VS{V}) \simeq \mathrm{GL}(\VS{V})$ is the space of linear automorphisms of $\VS{V}$, i.e., the invertible linear maps from $\VS{V}$ to itself.
In the remainder of this paper, we let
\begin{equation} \label{eqn_delta}
 \delta_\tensor{A} := \der_{\Id_\VS{V}} \phi_\tensor{A} \::\: \mathrm{End}(\VS{V}) \to \wedge^d \VS{V},\quad \dot{A} \mapsto \sum_{k=1}^d \dot{A} \cdot_k \tensor{A},
\end{equation}
where $\mathrm{End}(\VS{V})$ is the space of linear endomorphisms of $\VS{V}$.
Note that $\delta_\tensor{A}$ is the natural symmetric variant of the map \cref{eqn_full_dleto}.
It corresponds to using the symmetrized version of the \emph{universal chisel} \citep[Section 7.1]{BKW2024}, as is hinted at in \citep[Section 8.4]{BKW2024}.

As in \cref{sec_dleto}, we need to determine the kernel of $\delta_{\tensor{A}}$ for the tensor $\tensor{A}$ that we wish to decompose. Note that $\delta_\tensor{A}$ is linear in $\tensor{A}$ and a Grassmann decomposition expresses the latter as a linear combination of elementary tensors. Therefore, it suffices to understand the action of $\delta_{\tensor{A}}$ at an elementary skew-symmetric tensor.

\begin{lemma}[Differential]\label{lem_derivative}
Let $\tensor{A} = \vect{v}_1 \wedge\dots\wedge \vect{v}_d \in \wedge^d \VS{V}$. Then, the derivative of $\phi_\tensor{A}$ at the identity $\mathrm{Id}_\VS{V}$ is
\[
\delta_\tensor{A}(\dot{A}) = \sum_{k=1}^d \vect{v}_1 \wedge\cdots\wedge \vect{v}_{k-1} \wedge (\dot{A} \vect{v}_k) \wedge \vect{v}_{k+1} \wedge\cdots\wedge \vect{v}_d.
\]
\end{lemma}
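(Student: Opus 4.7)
The plan is to sidestep the temptation to interpret $\delta_{\tensor{A}}(\dot A)$ term-by-term (as a sum of per-mode wedge expressions) and instead reduce to differentiating a clean closed-form expression for $\phi_\tensor{A}$ on elementary tensors.

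First I would establish the identity
\[
 \phi_\tensor{A}(X) \;=\; (X\vect{v}_1) \wedge \cdots \wedge (X\vect{v}_d)
\]
for arbitrary $X \in \mathrm{Aut}(\VS{V})$. Expand $\tensor{A}$ using the definition \cref{eqn_wedge_product}, apply the multilinear multiplication $(X,\ldots,X)\cdot$ termwise via the defining property $(X,\ldots,X)\cdot(\vect{v}_{\sigma_1}\otimes\cdots\otimes\vect{v}_{\sigma_d}) = (X\vect{v}_{\sigma_1})\otimes\cdots\otimes (X\vect{v}_{\sigma_d})$, and collect the resulting permutation sum back into a wedge via \cref{eqn_wedge_product}.

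Second, differentiate at $X=\mathrm{Id}_\VS{V}$. Because the wedge product is a multilinear map of its $d$ vector arguments and each evaluation map $X\mapsto X\vect{v}_k$ is linear in $X$ with differential $\dot A \mapsto \dot A \vect{v}_k$, the chain rule together with the Leibniz rule for multilinear maps immediately yields
\[
 \der_{\mathrm{Id}_\VS{V}} \phi_\tensor{A}(\dot A) \;=\; \sum_{k=1}^d \vect{v}_1 \wedge\cdots\wedge \vect{v}_{k-1} \wedge (\dot A \vect{v}_k) \wedge \vect{v}_{k+1} \wedge\cdots\wedge \vect{v}_d.
\]
By \cref{eqn_delta}, the left-hand side is $\delta_\tensor{A}(\dot A)$, which is the claim. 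There is no real obstacle once the closed form for $\phi_\tensor{A}$ is in hand.

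The one point deserving attention is that the naive equation ``$\dot A \cdot_k \tensor{A} = \vect{v}_1 \wedge\cdots\wedge (\dot A \vect{v}_k)\wedge\cdots\wedge \vect{v}_d$'' does \textbf{not} hold mode-by-mode: expanding \cref{eqn_wedge_product} shows that $\dot A \cdot_k \tensor{A}$ applies $\dot A$ to whichever $\vect{v}_{\sigma_k}$ happens to sit in slot $k$ of each simple-tensor summand, so the positions of $\dot A$ are scrambled across $\sigma$. It is only after summing over $k=1,\dots,d$ that the permutation sum reorganizes into the $d$ wedges stated in the lemma. The route via $\phi_\tensor{A}(X)=(X\vect{v}_1)\wedge\cdots\wedge(X\vect{v}_d)$ bypasses this bookkeeping entirely, which is why I would take that route rather than attempting a direct term-by-term verification from \cref{eqn_delta}.
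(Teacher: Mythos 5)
Your proposal is correct and follows essentially the same route as the paper: both rest on the closed form $\phi_\tensor{A}(X) = (X\vect{v}_1)\wedge\cdots\wedge(X\vect{v}_d)$ (the paper's \cref{lem_mult_skew}) and then extract the first-order term via the multilinearity of $\wedge$ — your appeal to the Leibniz rule is the same computation as the paper's expansion of $\phi_\tensor{A}(\Id_\VS{V}+\epsilon\dot{A})$ in $\epsilon$. Your closing remark that the individual summands $\dot{A}\cdot_k\tensor{A}$ do not equal the individual wedge terms, and that only the sums over $k$ agree, is a correct and worthwhile clarification that the paper leaves implicit.
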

\begin{proof}
Because of the multilinearity of $\wedge$, by \cref{lem_elementary_props}(3), we have
\begin{align*}
 \phi_{\tensor{A}}(\Id_\VS{V} + \epsilon \dot{A})
 &= \bigl( (\Id_\VS{V} + \epsilon \dot{A}) \vect{v}_1 \bigr) \wedge\dots\wedge \bigl( (\Id_\VS{V} + \epsilon \dot{A}) \vect{v}_d \bigr)\\
 &= \phi_\tensor{A}(\Id_\VS{V}) + \epsilon \cdot \delta_\tensor{A}(\dot{A}) + o(\epsilon).
\end{align*}
The result follows from the definition of the directional derivative and the fact that $\mathrm{Aut}(\VS{V})$ is an open subset of the linear space of endomorphisms $\mathrm{End}(\VS{V})$.
\end{proof}

In the remainder of this text, it will be convenient to parameterize Grassmann decompositions using factor matrices. Let
\[
V_i = \begin{bmatrix} \vect{v}_i^1 & \dots & \vect{v}_i^d \end{bmatrix} \in \FF^{n \times d}
\quad\text{and}\quad
V = \begin{bmatrix} V_1 & \dots & V_r \end{bmatrix} \in \FF^{n \times n}
\]
be, respectively, the \emph{elementary factor matrix} of the $i$th elementary Grassmann tensor $\tensor{A}_i = \vect{v}_i^1 \wedge\dots\wedge \vect{v}_i^d$, and the \emph{decomposition factor matrix} of the rank-$r$ Grassmann decomposition $\tensor{A} = \tensor{A}_1 + \dots + \tensor{A}_r$. Neither elementary nor decomposition factor matrices are unique, given a Grassmann decomposition.
For example, any permutation of the matrices $V_i$ in $V$ would represent the same Grassmann decomposition.
More fundamentally and relevantly, $V_i D_i$ represents the same elementary tensor for all matrices $D_i$ with $\det(D_i)=1$, because of \cref{cor_det_transform}.

Based on the characterization in \cref{lem_derivative}, we can determine the structure of the kernel of $\delta_\tensor{A}$ at a generic low-rank Grassmann decomposition \changed{inside} a sufficiently large concise tensor space.

\begin{theorem}[Kernel structure theorem]\label{thm_main}
Consider a generic skew-symmetric tensor of Grassmann rank $r$,
\[
  \tensor{A} = \sum_{i=1}^r \vect{v}_i^1 \wedge\dots\wedge \vect{v}_i^d \in \wedge^d \VS{V} \simeq \wedge^d \FF^n,
\]
with decomposition factor matrix $V$, $n = dr$, and $d \ge 3$. Then, the kernel of $\delta_\tensor{A}$ is the following $r(d^2-1)$-dimensional linear subspace of $\FF^{n \times n}$:
\begin{align}\label{eqn_kernel}
 \kappa_\tensor{A} := \ker \delta_\tensor{A} =
 \{
 V \mathrm{diag}(A_1, \ldots, A_r) V^{-1} \mid  A_i \in \FF^{d \times d} \text{ with } \mathrm{tr}(A_i) = 0
 \}.
\end{align}
\end{theorem}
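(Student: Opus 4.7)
The plan is to prove $\ker\delta_\tensor{A}=\kappa_\tensor{A}$ by verifying the containment $\kappa_\tensor{A}\subseteq\ker\delta_\tensor{A}$ directly, and then using $\mathrm{GL}(\VS{V})$-equivariance to reduce the reverse containment to an explicit calculation in a canonical basis. The dimension count $r(d^2-1)$ will emerge as a by-product.

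For the easy containment, take $\dot{A}=V\diag(A_1,\dots,A_r)V^{-1}$ with $\mathrm{tr}(A_i)=0$. Since $V^{-1}\vect{v}_i^k$ is the $k$-th standard basis vector in the $i$-th block of size $d$, one finds $\dot{A}\vect{v}_i^k = \sum_\ell (A_i)_{\ell k}\vect{v}_i^\ell$. Substituting this into \cref{lem_derivative} and invoking \cref{eqn_alternating} to annihilate the terms with $\ell\neq k$ gives $\delta_\tensor{A}(\dot{A})=\sum_i\mathrm{tr}(A_i)\,\tensor{A}_i=0$, in close analogy with the computation from \cref{sec_dleto}.

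For the reverse inclusion, I would first record the equivariance $\delta_{(X,\dots,X)\cdot\tensor{A}}(X\dot{A} X^{-1})=(X,\dots,X)\cdot\delta_\tensor{A}(\dot{A})$, a short consequence of the composition rule for multilinear multiplication. Taking $X=V^{-1}$ reduces the problem to the canonical tensor $\widetilde{\tensor{A}}=\sum_{i=1}^r e^i_1\wedge\dots\wedge e^i_d$, where the standard basis of $\FF^{dr}$ is partitioned into $r$ blocks of size $d$ and $e^i_k$ denotes the $k$-th basis vector in the $i$-th block. Writing a candidate $B\in\ker\delta_{\widetilde{\tensor{A}}}$ as an $r\times r$ block matrix with $d\times d$ blocks $B^{j,i}$, so $Be^i_k=\sum_{j,\ell}B^{j,i}_{\ell,k}\,e^j_\ell$, \cref{lem_derivative} yields
\[
\delta_{\widetilde{\tensor{A}}}(B)=\sum_{i=1}^r\sum_{k=1}^d\sum_{j,\ell}B^{j,i}_{\ell,k}\;e^i_1\wedge\dots\wedge e^j_\ell\wedge\dots\wedge e^i_d,
\]
with $e^j_\ell$ inserted at position $k$. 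The contributions split into three cases: (i) if $j=i$ and $\ell=k$, one obtains $B^{i,i}_{k,k}\tensor{A}_i$; (ii) if $j=i$ and $\ell\neq k$, the term vanishes by \cref{eqn_alternating}; (iii) if $j\neq i$, one obtains $\pm B^{j,i}_{\ell,k}$ times a standard basis element of $\wedge^d\FF^n$.

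The main obstacle, and the step where the hypothesis $d\geq 3$ is used essentially, is to verify that the wedge monomials arising in case~(iii) are pairwise distinct and distinct from the $\tensor{A}_i$'s of case~(i). Each basis element in case~(iii) is indexed by $d-1$ vectors from a single block $i$ together with one vector from a different block $j\neq i$; because $d-1\geq 2$, the ``majority block'' $i$ is unambiguously determined by the index set, and hence so are $k$, $j$, and $\ell$. (When $d=2$ this pigeonhole collapses, which is reflected in the hypothesis and in \cref{sec_dleto}'s exclusion of skew-symmetric matrices.) Case~(iii) basis elements are also distinct from $\tensor{A}_i=e^i_1\wedge\dots\wedge e^i_d$, whose indices lie entirely in block $i$. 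Equating coefficients in $\delta_{\widetilde{\tensor{A}}}(B)=0$ therefore forces $B^{j,i}=0$ for every $j\neq i$ and $\mathrm{tr}(B^{i,i})=0$ for every $i$. Undoing the change of basis via $V$ recovers exactly $\kappa_\tensor{A}$, and the dimension formula $r(d^2-1)$ follows by inspection.
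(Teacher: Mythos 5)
Your proof is correct, and its second half takes a genuinely different route from the paper. The containment $\kappa_\tensor{A}\subseteq\ker\delta_\tensor{A}$ is the same computation the paper performs (phrased there with dual basis vectors $\vect{\partial}_i^k$). For the reverse inclusion, however, the paper never computes the kernel directly: it invokes Terracini's lemma to identify $\mathrm{im}(\delta_\tensor{A})$ with the affine tangent space $\Tang_{\tensor{A}}\sigma_r$ and then cites the nondefectivity theorem of Catalisano--Geramita--Gimigliano \cite{CGG2004} to get $\dim\mathrm{im}(\delta_\tensor{A})=n^2-r(d^2-1)$, whence $\dim\ker\delta_\tensor{A}\le r(d^2-1)$ by rank--nullity. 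You instead use $\mathrm{GL}$-equivariance to reduce to the canonical block tensor $\widetilde{\tensor{A}}$ and determine $\ker\delta_{\widetilde{\tensor{A}}}$ exactly by comparing coefficients in the monomial basis of \cref{lem_basis}; your pigeonhole observation that the ``majority block'' of an index set with $d-1$ indices in one block and one index elsewhere is well defined precisely because $d-1\ge 2$ is the combinatorial heart of the matter, and it is the correct place where $d\ge 3$ enters (for $d=2$ it collapses, consistent with the paper's exclusion of that case). Your route is elementary and self-contained---it avoids the secant-variety machinery entirely and, read backwards through Terracini, in fact reproves the relevant balanced case of \cite{CGG2004}---and it establishes the conclusion for \emph{every} tensor whose $dr$ vectors $\vect{v}_i^k$ are linearly independent, not merely generic ones; genericity is then needed only (via \cref{lem_full_mlrank}) to guarantee that $V$ is invertible, a point you should state explicitly at the outset. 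What the paper's route buys in exchange is brevity and an explicit link to the secant-variety and identifiability literature, at the cost of using the upper bound on the kernel as a black box.
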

\begin{proof}
By \cref{lem_full_mlrank}, we can assume that $\tensor{A}$ has multilinear rank $(dr,\ldots,dr)$. Because of \cref{lem_wedge_flattening} and linearity of the Grassmann decomposition, it then follows that the $\vect{v}_{i}^k$'s form a basis of $\FF^n$. The dual basis vector of $\vect{v}_i^k$ \changed{will be denoted} by $\vect{\partial}_{i}^k$.

\changed{First, we determine a subspace $\VS{K} \subset \mathrm{End}(\VS{V})$ that is contained in the kernel of $\delta_{\tensor{A}}$.}
By \cref{lem_derivative} and linearity, we have 
\[
 \delta_\tensor{A}(\dot{A}) = \sum_{i=1}^r \sum_{k=1}^d \vect{v}_i^1\wedge\dots\wedge\vect{v}_i^{k-1} \wedge (\dot{A} \vect{v}_i^k) \wedge \vect{v}_i^{k+1}\wedge\dots\wedge \vect{v}_i^d.
\]
Then, if $\dot{A}_i^k := (\lambda_i^{1k} \vect{v}_i^1 +\cdots+ \lambda_i^{dk} \vect{v}_i^d) \partial_i^k$, where the superscripts are indices, then we compute that
\begin{align*}
\delta_\tensor{A}(\dot{A}_i^k) 
&= 
\vect{v}_i^1\wedge\dots\wedge\vect{v}_i^{k-1} \wedge ( \lambda_i^{1k} \vect{v}_i^1 +\cdots+ \lambda_i^{dk} \vect{v}_i^d ) \wedge \vect{v}_i^{k+1}\wedge\dots\wedge \vect{v}_i^d\\
&= \lambda_i^{kk} \vect{v}_i^1 \wedge\dots\wedge \vect{v}_i^d,
\end{align*}
because of \cref{lem_elementary_props}(1) and (3). Letting $\dot{A}_i = \dot{A}_i^1+\cdots+\dot{A}_i^d$ then yields
\[
 \delta_\tensor{A}(\dot{A}_i) 
 = (\lambda_i^{11}+\cdots+\lambda_i^{dd}) \vect{v}_i^1 \wedge\dots\wedge \vect{v}_i^d
 = \changed{\mathrm{tr}(\Lambda_i) \vect{v}_i^1 \wedge\dots\wedge \vect{v}_i^d,}
\]
\changed{where $\Lambda_i := [\lambda_{i}^{k\ell}] \in \FF^{d \times d}$. Observe that by definition, 
\[
\dot{A}_i 
= \sum_{k=1}^d \dot{A}_i^k 
= \sum_{k=1}^d \sum_{j=1}^d \lambda_i^{jk} \vect{v}_i^j \partial_i^k,
\]
which is an endomorphism of the subspace $\VS{V}_i := \mathrm{span}(\vect{v}_i^1, \dots, \vect{v}_i^d) \subset \VS{V}$. Since the multilinear rank of $\tensor{A}$ is $(dr,\dots,dr)$ and $\dim \VS{V} = dr$, it follows that we have the direct decomposition of subspaces $\VS{V} = \VS{V}_1 \oplus \VS{V}_2 \oplus \dots \oplus \VS{V}_r$. Consequently, every linear endomorphism $\dot{A} \in \operatorname{End}(\VS{V})$ for which for all $i=1,\dots,r$, the space $\VS{V}_i$ is an invariant subspace and the restriction of $\dot{A}$ to this invariant subspace is traceless, i.e., 
\[
\mathrm{tr}\left( \dot{A}|_{\VS{V}_i} \right) = \mathrm{tr}\left( \dot{A}_i \right) = 0
\]
will be an element of the kernel of $\delta_{\tensor{A}}$. The linear subspace of all these operators is $\VS{K}\subset \ker \delta_{\tensor{A}}$.}
Observe that the dimension of $\VS{K}$ is $r(d^2 -1)$, because \changed{$\VS{V}$ and its decomposition into invariant subspaces} is fixed, and that it coincides with the right-hand side of \cref{eqn_kernel} \changed{under the isomorphism that identifies $\VS{V}$ with $\FF^n$}. Hence, $\dim \kappa_\tensor{A} \ge r(d^2 - 1)$.

\changed{Second,} to show that the kernel is not larger, we proceed as follows. Observe that the affine tangent space to the affine $r$th secant variety $\sigma_r$ of the Grassmannian $\mathrm{Gr}(d,\FF^n)$ at $\tensor{A}$ is, due to Terracini's lemma \citep{Terracini1911} and the genericity of $\tensor{A}$, equal to
\[
\Tang_{\tensor{A}} \sigma_r = \left\{
 \sum_{i=1}^r \sum_{k=1}^d \vect{v}_i^1\wedge\dots\wedge\vect{v}_i^{k-1} \wedge \dot{\vect{w}}_i^k \wedge \vect{v}_i^{k+1}\wedge\dots\wedge \vect{v}_i^d \mid \dot{\vect{w}}_i^k \in \FF^n \right\};
\]
see, e.g., \citep{Boralevi2013}. Let $\dot{W} = [\dot{\vect{w}}_{i}^k]$, and then since $V$ is invertible, we have
\[
 \Tang_{\tensor{A}} \sigma_r = \{ \delta_\tensor{A}(\dot{W}V^{-1}) \mid \dot{W} \in \FF^{n\times n} \} = \mathrm{Im}( \delta_\tensor{A} ).
\]
By the nondefectivity result for $d \ge 3$ in \citep[Theorem 2.1]{CGG2004}, we have
\[
\dim \Tang_{\tensor{A}} \sigma_r = r (1 + \dim \mathrm{Gr}(d, \FF^n)) = r(1 + d(n-d)) = n^2 - r(d^2-1),
\]
having used $n = dr$.
Thus, $\dim \kappa_\tensor{A} \le r(d^2-1)$, which concludes the proof.
\end{proof}

The proof of \cref{thm_main} shows that the structure of the kernel does not depend on the particular decomposition or the \emph{choice} of the $\vect{v}_i^k$'s. Each Grassmann decomposition of $\tensor{A}$ yields an \emph{equivalent description} of the same kernel.
The identifiability of the kernel implies identifiability of the Grassmann decomposition of $\tensor{A}$. This will be shown through the next series of results.

\begin{lemma}[Generic diagonalizability]\label{lem_evd}
Let $\tensor{A}$ and $V$ be as in \cref{thm_main}. 
A generic element $K$ of $\kappa_\tensor{A}$ has distinct eigenvalues and is hence diagonalizable. Moreover, if $K = Z \Lambda Z^{-1}$ is any EVD, then there is a permutation matrix $P$ so that
\(
 \mathrm{span}(V_i) = \mathrm{span}(Z_i'),
\)
where $Z_i' \in \FF^{n \times d}$ and $Z P = \begin{bmatrix}Z_1' &\dots& Z_r'\end{bmatrix}$.
\end{lemma}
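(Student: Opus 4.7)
The plan is to leverage the explicit parameterization $K = V \diag(A_1, \ldots, A_r) V^{-1}$ with $\mathrm{tr}(A_i) = 0$ provided by \cref{thm_main} and reduce the eigenvalue problem for $K$ to the eigenvalue problems for the $d \times d$ blocks $A_i$. Throughout, ``generic $K \in \kappa_\tensor{A}$'' will be understood as: the $r$ traceless matrices $A_1, \ldots, A_r$ are chosen generically in $\FF^{d \times d}$, which induces a generic $K$ since the parameterization $(A_1,\ldots,A_r) \mapsto K$ is a linear isomorphism between the trace-zero blocks and $\kappa_\tensor{A}$.

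First I would establish genericity of the spectrum. Since conjugation by the invertible $V$ preserves eigenvalues and the spectrum of a block-diagonal matrix is the multiset union of the spectra of its blocks, $\mathrm{spec}(K) = \bigcup_{i=1}^r \mathrm{spec}(A_i)$. The discriminant of the characteristic polynomial $\chi_{A_i}$ is a nonzero polynomial in the entries of $A_i$ (it is nonzero on, e.g., any diagonal matrix with distinct entries), so a generic trace-zero $A_i$ has $d$ distinct eigenvalues. Similarly, for $i \ne j$ the resultant of $\chi_{A_i}$ and $\chi_{A_j}$ is a nonzero polynomial in the entries of $A_i, A_j$, so generically $\mathrm{spec}(A_i) \cap \mathrm{spec}(A_j) = \emptyset$. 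Combining these, all $n = dr$ eigenvalues of $K$ are pairwise distinct generically, which yields diagonalizability and one-dimensional eigenspaces.

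Second, the block relation $KV = V \diag(A_1,\ldots,A_r)$ immediately yields $K V_i = V_i A_i$, so each $d$-dimensional subspace $\mathrm{span}(V_i)$ is $K$-invariant, and $K$ acts on it as $A_i$ in the basis $V_i$. Fixing EVDs $A_i = Q_i D_i Q_i^{-1}$ of each block then produces one particular EVD $K = \widetilde{V}\, \widetilde{D}\, \widetilde{V}^{-1}$, where $\widetilde{V} = V \diag(Q_1,\ldots,Q_r)$ and $\widetilde{D} = \diag(D_1,\ldots,D_r)$; the columns of $V_i Q_i$ are eigenvectors of $K$ for precisely the $d$ eigenvalues of $A_i$.

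Finally, I would invoke the classical fact that a matrix with $n$ distinct eigenvalues has an EVD unique up to a permutation and nonzero rescaling of the columns of the eigenvector matrix. Therefore any EVD $K = Z \Lambda Z^{-1}$ is obtained from $(\widetilde{V}, \widetilde{D})$ by such a permutation and rescaling. Since by Step~1 the $d$ eigenvalues coming from block $A_i$ are disjoint from those of the other blocks, there is a permutation matrix $P$ that regroups the columns of $Z$ into $d$-blocks aligned with $\widetilde{V}$: writing $ZP = \begin{bmatrix} Z_1' & \cdots & Z_r' \end{bmatrix}$, each $Z_i'$ consists of nonzero column rescalings of the columns of $V_i Q_i$, so $\mathrm{span}(Z_i') = \mathrm{span}(V_i Q_i) = \mathrm{span}(V_i)$ as $Q_i$ is invertible. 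The main obstacle is the genericity argument in Step~1, specifically guaranteeing disjoint spectra across blocks via the resultant; once distinct eigenvalues are secured, the rest is standard linear algebra sitting on top of the block structure from \cref{thm_main}.
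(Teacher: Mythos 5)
Your proposal is correct and follows essentially the same route as the paper: genericity of distinct eigenvalues is established by showing the failure locus is cut out by nontrivial polynomials (the paper exhibits a single explicit traceless block-diagonal witness $V\Delta V^{-1}$, while you split the argument into per-block discriminants plus pairwise resultants, which amounts to the same thing), and the second claim follows in both cases from the essential uniqueness of the EVD of a matrix with simple spectrum together with the block relation $KV_i = V_iA_i$.
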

\begin{proof}
A matrix $A$ has an eigenvalue of multiplicity $k > 1$ if and only if the discriminant of the characteristic polynomial, a nontrivial polynomial in the entries of $K$, vanishes \citep[Chapter 12, Section 1.B]{GKZ1994}. The matrices
\begin{equation}\label{eqn_good_traceless}
\Delta_k = \diag\left(-(2k+1)\binom{d}{2},kd+1,kd+2,\ldots,kd+d-1\right) 
\end{equation}
have zero trace and no coinciding eigenvalues. Therefore, the diagonal matrix $\Delta = \diag(\Delta_1, \dots, \Delta_r)$ is traceless and has no coinciding eigenvalues. Since $V \Delta V^{-1} \in \kappa_\tensor{A}$ and it has a nonvanishing discriminant, this entails that the variety of matrices in $\kappa_\tensor{A}$ with coinciding eigenvalues is a strict Zariski closed subset. Matrices with distinct eigenvalues are diagonalizable \citep[Theorem 1.3.9]{HJ2013}.

The second part is a corollary of the essential uniqueness of the EVD of diagonalizable matrices, see, e.g., \citep[Theorem 1.3.27]{HJ2013}. In particular, the eigenvectors corresponding to a particular eigenvalue are unique up to scale. Hence, the eigenspace corresponding to some subset of distinct eigenvalues is unique.
\end{proof}

The previous result showed that diagonalization of a generic element in the kernel identifies a set of basis vectors that can be partitioned to provide bases of $\mathrm{span}(\vect{v}_i^1,\ldots,\vect{v}_i^d)$. By \cref{cor_det_transform}, such bases identify the elementary tensors $\vect{v}_i^1 \wedge\dots\wedge \vect{v}_i^d$ up to scale. However, \cref{lem_evd} did not clarify how to perform this partitioning, i.e., how to find $P$. This is covered by the next result.

\begin{lemma}[Generic block diagonalizability]\label{lem_permutation}
Let $\tensor{A}$, $Z$, and $P$ be as in \cref{lem_evd}.
A generic element $K' \in \kappa_\tensor{A}$ is, up to permutation, block diagonalized by $Z$:
\[
Z^{-1} K' Z = P \diag(A_1, \dots, A_r) P^\trans{}, 
\]
where the $A_i \in (\FF\setminus\{0\})^{d\times d}$ have zero trace.
\end{lemma}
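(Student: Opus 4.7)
The plan is to pin down $Z$ explicitly in terms of the decomposition factor matrix $V$ using \cref{lem_evd}, and then evaluate $Z^{-1} K' Z$ directly via the description of $\kappa_\tensor{A}$ supplied by \cref{thm_main}. The main subtlety is establishing an identity of the form $Z = VMP^T$ with $M$ block-diagonal and invertible; once that is in hand, the desired block-diagonal form of $Z^{-1}K'Z$ drops out by a direct computation, and the nonvanishing of entries is a routine linear genericity argument.

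By \cref{thm_main}, every $K' \in \kappa_\tensor{A}$ can be written as $K' = V \diag(A_1', \ldots, A_r') V^{-1}$ with each $A_i' \in \FF^{d \times d}$ traceless, and the assignment $K' \mapsto (A_1', \ldots, A_r')$ is a linear isomorphism onto the product of $r$ copies of the space of traceless $d\times d$ matrices. By \cref{lem_evd}, there is a permutation $P$ such that, writing $ZP = [Z_1' \mid \cdots \mid Z_r']$ with $Z_i' \in \FF^{n \times d}$, one has $\mathrm{span}(V_i) = \mathrm{span}(Z_i')$. I would upgrade this span equality to a matrix identity: because $Z$ is invertible and the subspaces $\mathrm{span}(V_i)$ are in direct sum, each block $Z_i'$ has full column rank $d$ and therefore equals $V_i M_i$ for a unique invertible $M_i \in \FF^{d \times d}$. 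Setting $M = \diag(M_1, \ldots, M_r)$, this reads $Z = V M P^T$.

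A direct block-multiplication then yields
\[
 Z^{-1} K' Z
 = P M^{-1} V^{-1} \cdot V \diag(A_1', \ldots, A_r') V^{-1} \cdot V M P^T
 = P \diag(A_1, \ldots, A_r) P^T,
\]
where $A_i := M_i^{-1} A_i' M_i$, and conjugation-invariance of the trace gives $\mathrm{tr}(A_i) = \mathrm{tr}(A_i') = 0$. This is the asserted block-diagonal form.

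For the genericity statement, I would argue as follows. For each fixed $M_i$, the map $A_i' \mapsto M_i^{-1} A_i' M_i$ is a linear automorphism of the space of traceless $d \times d$ matrices, so the nonvanishing of a single entry $(A_i)_{jk}$ is a single nonzero linear condition on $A_i'$. The union over all $(i,j,k)$ of the corresponding vanishing loci is a finite union of hyperplanes in $(A_1', \ldots, A_r')$-space; pulling its complement back through the isomorphism of \cref{thm_main} gives a nonempty Zariski-open subset of $\kappa_\tensor{A}$ on which every entry of every $A_i$ is nonzero, concluding the proof.
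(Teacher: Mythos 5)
Your proposal is correct and follows essentially the same route as the paper: upgrade the span equality from Lemma~\ref{lem_evd} to the identity $ZP = V\diag(M_1,\dots,M_r)$, conjugate the block-diagonal form of $K'$ from Theorem~\ref{thm_main}, and observe that the entry-vanishing locus is a proper Zariski-closed subset. The only (immaterial) difference is in how properness is certified: you argue abstractly that each entry of $M_i^{-1}A_i'M_i$ is a nonzero linear functional on the traceless $A_i'$ (which tacitly uses that the coordinate functionals do not vanish on traceless matrices, true for $d\ge 2$), whereas the paper exhibits an explicit traceless witness with all entries nonzero.
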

\begin{proof}
As $K' \in \kappa_\tensor{A}$, it can be written as $K' = V \diag(A_1',\dots,A_r') V^{-1}$ with $A_i'$ traceless $d\times d$ matrices. Then, \cref{lem_evd} states that there exists a permutation $P$ and $d \times d$ invertible matrices $X_i$ such that
\(
 Z P = V \diag(X_1,\dots,X_r).
\)
Hence, we find that
\begin{align*}
 Z^{-1} K' Z 
 &= P \diag(X_1^{-1},\dots,X_r^{-1}) V^{-1} K' V \diag(X_1,\dots,X_r) P^\trans{}\\
 &= P \diag( X_1^{-1} A_1' X_1, \dots, X_r^{-1} A_r' X_r ) P^\trans{},
\end{align*}
which proves the first part of the claim.

The important piece of the claim is that $A_k = X_k^{-1} A_k' X_k$ has all nonzero elements, for generic $K' \in \kappa_\tensor{A}$. Since $\tensor{A}$, $V$, $Z$, and $P$ are fixed, $X_k$ and $X_k^{-1}$ are matrices of constants, while the coordinates of $A_k'$ are considered variables, i.e., $A_k \in \FF[ a_{k,ij} \mid 1\le i,j\le d]$ for $k=1,\ldots,r$. Hence,
\[
 0 = (A_k)_{ij} = (X_k^{-1} A_k' X_k)_{ij} = \sum_{p=1}^d \sum_{q=1}^d y_{k,ip} x_{k,qj} a_{k,pq}
\]
is a linear equation in the variables $a_{k,ij}$, where $X_k=[x_{k,ij}]$ and $X^{-1}=[y_{k,ij}]$. Clearly, $A_k$ has an element equal to zero if and only if the single polynomial equation $\prod_{i,j} (A_k)_{ij}$ vanishes.
Thus, the matrices $A_k$ with some nonzero elements \changed{are contained} in a Zariski closed set. It suffices to exhibit one $K' \in \kappa_\tensor{A}$ with all nonzero entries in all $A_k$'s to conclude that the foregoing closed set is a strict subset of $\kappa_\tensor{A}$.
Let $A'_k = X_k (\Delta_k + \mathbf{1} \mathbf{1}^\trans{} - \Id) X_k^{-1}$, where $\Delta_k$ is as in \cref{eqn_good_traceless}. Note that $\mathbf{1} \mathbf{1}^\trans{} - \Id$ is the matrix of ones, except on the diagonal where it is zero. With this choice of $A_k'$, we see that $A_k = \Delta_k + \mathbf{1}\mathbf{1}^\trans{} - \Id$, which is traceless and has all its entries different from $0$. This concludes the proof.
\end{proof}

Note how the notation emphasizes that, evidently, one cannot take $K$ from \cref{lem_evd} and $K'$ from \cref{lem_permutation} equal to one another.

\Cref{lem_permutation} suggests a procedure for identifying $P$. Let $Z$ and $K'$ be as in the lemma, and let $C = Z^{-1} K' Z$. Then, we can determine a permutation $P'$ that block diagonalizes $C$ simply by inspecting the nonzero elements of $C$ and building the permutation greedily; see \cref{sec_sub_partitioning} for concrete details. The resulting permutation $Q$ is not guaranteed to be equal to $P$. Nevertheless, $P$ and $Q$ are related through the existence of a permutation $\pi$ so that if $P^\trans{} C P = \diag(A_1, \dots, A_r),$ then $Q^\trans{} C Q = \diag(A_{\pi_1},\dots,A_{\pi_r})$. This implies that applying $P^\trans{} Q$ on the right permutes the block columns according to $\pi$.
That is, using the notation from the proof of \cref{lem_permutation}, since $Z P = V \diag(X_1, \dots, X_r)$, we have
\[
Z Q
= V \diag(X_1, \dots, X_r) P^\trans{} Q 
= \begin{bmatrix} V_{\pi_1} X_{\pi_1} & \dots & V_{\pi_r} X_{\pi_r} \end{bmatrix}.
\]
Thus, $ZQ = \begin{bmatrix} Z_{\pi_1}' & \dots & Z_{\pi_r}' \end{bmatrix}$, where the $Z_i$'s are as in \cref{lem_evd}. This means that the same elementary tensors $\vect{v}_i^1 \wedge\dots\wedge \vect{v}_i^d$ are identified, up to scale, except in a different order. Consequently, the correct \emph{set} of elementary skew-symmetric tensors, up to scale, are identified by the partitioning induced from $Q$.

\subsection{The high-level algorithm}

We are now ready to combine the foregoing ingredients into a mathematical algorithm for \changed{exact} low-rank Grassmann decomposition \changed{of noise-free tensors}. This algorithm is presented as \cref{alg_main_computation}. Its correctness is established by the next result.

\begin{algorithm}[tb]
\caption{Mathematical algorithm for low-rank Grassmann decomposition}
\label{alg_main_computation}
\algrenewcommand\alglinenumber[1]{{\small S{\the\numexpr#1-1\relax}.}}
\begin{algorithmic}[1] 
\Require{Tensor $\tensor{T}\in\wedge^d \VS{W}$ is generic of Grassmann rank $r \le \frac{1}{d} \dim\VS{W}$.}
\State{Compute an orthonormal basis $U$ of the column span of $\tensor{T}_{(1)}$ and express $\tensor{T}$ in it: $\tensor{A}=(U^\herm,\dots,U^\herm)\cdot\tensor{T}$;}
\State{Compute the matrix $J$ of the map $\delta_\tensor{A} : \FF^{dr \times dr} \to \wedge^d \FF^{dr},\, \dot{X} \mapsto \sum_{k=1}^d \dot{X} \cdot_k \tensor{A}$;}
\State{Compute the kernel $\kappa_\tensor{A}$ of $J$;}
\State{Choose a generic matrix $K \in \kappa_\tensor{A}$ and compute an EVD $K = W \Lambda W^{-1}$;}
\State{Choose a generic matrix $K' \in \kappa_\tensor{A}$ and compute a permutation $P$ such that $P^\trans{} W^{-1} K' W P$ is a block diagonal matrix;}
\State{Partition $WP = \begin{bmatrix} V_1 & \dots & V_r \end{bmatrix}$ with $V_i := \begin{bmatrix} \vect{v}_{i}^1 &\dots& \vect{v}_i^d \end{bmatrix}$ and improve the $V_i$'s;}
\State{Solve the linear system $\sum_{i=1}^r x_i \vect{v}_i^1\wedge\dots\wedge \vect{v}_i^d = \tensor{A}$ for $x_1,\dots,x_r$;}
\State Compute the factor matrix $D = \begin{bmatrix} x_1 U V_1 & \dots & x_r U V_r \end{bmatrix}$;
\State \Return $D$.
\end{algorithmic}
\end{algorithm}

\begin{theorem}\label{thm_main_algorithm}
Let 
\(
  \tensor{T} \in \wedge^d \VS{W} \simeq \wedge^d \FF^m
\)
be a generic skew-symmetric tensor of Grassmann rank $r$ with $m \ge dr$ and $d \ge 3$. Then, \cref{alg_main_computation} computes a set of elementary skew-symmetric tensors decomposing $\tensor{T}$.
\end{theorem}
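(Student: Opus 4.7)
The plan is to verify that each step of \cref{alg_main_computation} either performs a well-defined computation or is justified by one of the preceding structural results, arguing line by line that correctness propagates from the compressed, concise setting back to the original tensor.

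First, I would handle Steps S0--S1, which reduce to the concise setting. By \cref{lem_full_mlrank}, a generic Gr-rank-$r$ tensor $\tensor{T}$ with $m \ge dr$ has multilinear rank $(dr,\dots,dr)$, so the column span of $\tensor{T}_{(1)}$ has dimension exactly $dr$ and $U \in \FF^{m \times dr}$ has orthonormal columns. By \cref{lem_compression}, the (unique) Grassmann decomposition of $\tensor{T}$ lives in $\wedge^d \VS{V}$, where $\VS{V} = \mathrm{span}(U)$, so the compressed tensor $\tensor{A} = (U^H,\dots,U^H)\cdot \tensor{T} \in \wedge^d \FF^{dr}$ has Gr-rank $r$ and is concise. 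Genericity of $\tensor{T}$ translates to genericity of $\tensor{A}$ in the sense required by \cref{thm_main,lem_evd,lem_permutation}, because the compression is a finite-to-one rational map whose fiber over a generic $\tensor{A}$ corresponds to choices of the orthonormal complement.

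Next, for Steps S2--S3, the map $\delta_\tensor{A}$ is linear by \cref{eqn_delta}, so it has a finite matrix $J$ and a computable kernel $\kappa_\tensor{A}$, whose structure is given explicitly by \cref{thm_main} as $\{V\diag(A_1,\dots,A_r)V^{-1} : \mathrm{tr}(A_i)=0\}$ for a decomposition factor matrix $V = [V_1\,\cdots\, V_r]$. I would then invoke \cref{lem_evd} at Step S4: generically, $K \in \kappa_\tensor{A}$ has distinct eigenvalues, so the EVD $K = W \Lambda W^{-1}$ exists and, for some permutation $P$, the block partition $WP = [Z_1'\,\cdots\, Z_r']$ satisfies $\mathrm{span}(Z_i') = \mathrm{span}(V_i)$. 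At Step S5, \cref{lem_permutation} shows that a generic second kernel element $K'$ satisfies $W^{-1} K' W = P \diag(A_1,\dots,A_r) P^T$ with every $A_k$ having no zero entry, so the block pattern of $W^{-1} K' W$ determines a permutation $Q$ which block-diagonalizes the conjugate and whose induced partition of the columns of $WQ$ coincides with the partition induced by $P$ up to a permutation $\pi$ of the blocks (as detailed in the discussion following \cref{lem_permutation}).

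Finally, for Steps S6--S8, after partitioning $WQ = [V_1'\,\cdots\, V_r']$, each $V_i'$ spans the same $d$-dimensional subspace as $V_{\pi_i}$, so by \cref{cor_det_transform} (the determinantal transformation rule) there exist unique scalars $\beta_i \in \FF^\times$ with $(V_{\pi_i})_1 \wedge\cdots\wedge (V_{\pi_i})_d = \beta_i (V_i')_1 \wedge\cdots\wedge (V_i')_d$. The linear system in Step S6 recovers exactly the coefficients $x_i = \beta_i$, and this system is uniquely solvable because the elementary tensors of the Grassmann decomposition of a generic Gr-rank-$r$ tensor are linearly independent (a straightforward consequence of identifiability as discussed in the introduction, or directly of the concision of $\tensor{A}$). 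Undoing the compression via the orthonormal map $U$ in Step S7 then yields a valid decomposition factor matrix for $\tensor{T}$. The main obstacle I anticipate is the bookkeeping around genericity: making sure that the Zariski-open conditions of \cref{lem_evd,lem_permutation} are indeed simultaneously satisfiable and that the unspecified ``generic'' choices of $K$ and $K'$ in Steps S3--S4 can be made (for instance, by a random linear combination of a basis of $\kappa_\tensor{A}$), since each condition cuts out only a strict Zariski-closed subset of $\kappa_\tensor{A}$ and their union is still a strict subset.
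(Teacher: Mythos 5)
Your proposal is correct and follows essentially the same route as the paper's proof: compress to the concise space via Lemmas \ref{lem_full_mlrank} and \ref{lem_compression}, justify S1--S5 by \cref{thm_main,lem_evd,lem_permutation}, argue unique solvability of the linear system in S6 from the linear independence of the elementary tensors (the paper phrases this as a contradiction with the assumed Gr-rank $r$, which is the same observation), and undo the compression via $U$ in S7. Your closing remark on the simultaneous satisfiability of the finitely many Zariski-open genericity conditions is a sensible addition that the paper leaves implicit.
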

\begin{proof}
By \cref{lem_compression}, we can focus on concise tensor spaces. 
A higher-order singular value decomposition (HOSVD) \citep{dLdMV2000} will compute orthonormal bases for the concise tensor product subspace $\wedge^d \VS{V}$ \changed{containing} $\tensor{T}$. By \cref{lem_wedge_flattening}, the standard flattenings are equal up to sign, so the HOSVD can be computed as in S0; \changed{see also \cite{BK2017}}. By the definition of multilinear multiplication, applying $U \otimes\dots\otimes U$ to any elementary skew-symmetric tensor $\vect{v}_i^1\wedge\dots\wedge \vect{v}_i^d$ yields the same elementary tensor $(U\vect{v}_i^1)\wedge\dots\wedge(U\vect{v}_i^d)$ but embedded in the original ambient space $\wedge^d \VS{W}$. This proves the correctness of step S7 of the theorem.

The correctness of steps S1--S5 follows immediately from combining \cref{thm_main,lem_evd,lem_permutation}. These results also show that a set of elementary skew-symmetric tensors $\{ \vect{v}_i^1\wedge\dots\wedge \vect{v}_i^d \mid i \in [r] \}$ will be identified, so that the solution of the linear system from step S6 in the theorem yields the rank-$r$ Grassmann decomposition of $\tensor{A} = (U^\herm,\dots,U^\herm)\cdot \tensor{T}$.

Note that the linear system $\tensor{A} = \sum_{i=1}^r x_i \vect{v}_i^1 \wedge\dots\wedge \vect{v}_i^d = \sum_{i=1}^r x_i \tensor{A}_i$ has a unique solution. Indeed, it could have multiple solutions $x_i$ with the same set of elementary tensors only if these tensors are not linearly independent. However, if this were the case, say $\tensor{A}_1 = x_2' \tensor{A}_2 + \dots + x_r' \tensor{A}_r$, without loss of generality, then we could factorize
\(
 \tensor{A} = \sum_{i=2}^r (1 + x_i') \tensor{A}_i,
\)
which contradicts the assumption that $\tensor{T}$, and, hence, $\tensor{A}$, has Gr-rank equal to $r$.
\end{proof}

\section{An efficient numerical implementation} \label{sec_numerical_algorithm}

This section discusses a concrete realization of \cref{alg_main_computation} as a numerical method in coordinates, suitable for implementation in floating-point arithmetic. 
It was designed for \emph{decomposition} of a tensor that is mathematically of low Gr-rank,
not as an \emph{approximation} method \changed{for finding a nearby low Gr-rank tensor given an arbitrary input}. When the Grassmann decomposition model \cref{eqn_grassmann_decomp} holds only approximately, one may use \cref{alg_main_computation} as an initialization method for an optimization-based algorithm, \changed{as is commonly done for tensor rank and block term decomposition in state-of-the-art tensor packages \cite{Tensorlab}.} The numerical experiments in \cref{sec_numerical_experiments} will investigate insofar as the concrete numerical implementation presented in this section can withstand \changed{small} model violations.
 
\changed{In my implementation,} tensors in $\wedge^d \VS{W}$ are represented in coordinates with respect to a basis 
\begin{equation}\label{eqn_standard_basis}
 ( \vect{w}_{i_1} \wedge\dots\wedge \vect{w}_{i_d} \mid 1 \le i_1 < \dots < i_d \le m),
\end{equation}
where $(\vect{w}_1,\dots,\vect{w}_m)$ is a basis of $\VS{W}$, by \cref{lem_basis}.
In this way, a skew-symmetric tensor is compactly represented as a vector of length $\dim \wedge^d\VS{W} = \binom{m}{d}$. \changed{Another option would be to treat them as general tensors in $\VS{W}^{\otimes d}$. However, this requires $m^d$ coordinates, which is approximately $d!$ times more expensive than the foregoing minimal representation.}

In the following subsections, the main steps and the notation of \cref{alg_main_computation} \changed{is reprised}, and additional details \changed{are provided} on how they can be implemented efficiently. 
The asymptotic time complexity of the proposed numerical implementation of \cref{alg_main_computation} is summarized in \cref{tab_complexity}. These complexity estimates are obtained by retaining the highest-order terms in the individual complexity analyses presented in the next subsections.

\begin{table}
\caption{The asymptotic time complexities of each step of the numerical implementation of \cref{alg_main_computation} from \cref{sec_numerical_algorithm} for decomposing a generic Gr-rank $r$ skew-symmetric tensor $\tensor{A} \in \wedge^d \FF^m$ into elementary skew-symmetric tensors. The integer $n = dr$.}
\newcommand{\ccenter}[1]{\hfill#1\hfill}
\[
 \begin{array}{rrrrrrrrr}
 \toprule
  \ccenter{\textbf{S0}} & \ccenter{\textbf{S1}} & \ccenter{\textbf{S2}} & \ccenter{\textbf{S3}} & \ccenter{\textbf{S4}} & \ccenter{\textbf{S5}} & \ccenter{\textbf{S6}} & \ccenter{\textbf{S7}} & \ccenter{\textbf{total}} \\
  \midrule 
  \changed{m^2 \binom{m}{d-1}} & \changed{n^2 m^2 \binom{m}{d-2}} & n^6 & n^3 & n^3 & d^2 n^3 & \changed{dmn \binom{m}{d-1}} & mn^2 & \text{S0} + \text{S1} + \text{S2} + \text{S6} \\
  \bottomrule
 \end{array}
\]
\label{tab_complexity}
\end{table}

\subsection{S0: Computing \changed{a basis of the concise tensor space}} \label{sec_sub_hosvd}

Given a tensor $\tensor{T} \in \wedge^d \VS{W}$, we can identify the concise tensor space $\wedge^d \VS{V}$ \changed{that contains $\tensor{T}$} by computing the image of $\tensor{T}_{(1)}.$ Indeed, if $\tensor{T} \in \wedge^d \VS{V} \subset \wedge^d \VS{W}$, then the $1$-flattening satisfies
\[
\tensor{T}_{(1)} \in \VS{V} \otimes (\wedge^{d-1} \VS{V})^* \subset \VS{W} \otimes (\wedge^{d-1} \VS{W})^*.
\]
The image of $\tensor{T}_{(1)}$ coincides with $\VS{V}$, for otherwise $\wedge^{d} \VS{V}$ would not be concise.

\changed{The mathematical \cref{alg_main_computation} suggests to explicitly express $\tensor{T}$ in coordinates with respect to an orthonormal basis of its concise space $\wedge^d \VS{V}$.}
\changed{However, computing $\tensor{A}=(U^\herm,\dots,U^\herm)\cdot\tensor{T}$ with high speed and low memory consumption seems to be hard in practice.
A few natural strategies are as follows.
First, the standard algorithm for multilinear multiplication using flattenings, matrix multiplication, and circular shifts (see, e.g., \cite[Section 4.1.2]{BV2023}) performs very well in terms of computational throughput, but requires asymptotically more operations, and, more significantly, requires a representation of $\tensor{T}$ as an $m\times\dots\times m$ array, which consumes $d!$ times more memory than exploiting its representation with $\binom{m}{d}$ coordinates in the basis \cref{eqn_standard_basis}. Second, a relatively technical algorithm was described in \cite[Section 5.1]{Vannieuwenhoven2024b} that exploits the partially skew-symmetric structures that arise when specializing the aforementioned algorithm; while it theoretically has a better time complexity, its computational throughput was low due to the unfavorable memory access patterns of flattenings and inverse flattenings. Third, computing the $\binom{n}{d}$ entries of $\tensor{A}$, with $n=dr$, as 
\[
a_{i_1 \dots i_d} = \sum_{1 \le j_1 < \dots < j_d \le m} \sum_{\sigma\in\mathfrak{S}([d])} \sign(\sigma) \overline{u_{i_1 j_1} \cdots u_{i_d j_d}} \cdot t_{j_1 \dots j_d}
\]
requires no additional memory but does involve computing and summing over $\binom{m}{d}$ elements, yielding a complexity of at least $\binom{m}{d} \binom{n}{d}$ operations; this is usually much more than the $d n m^{d}$ cost of the first algorithm.}

\changed{A careful inspection of \cref{alg_main_computation} reveals that $\tensor{A} = (U^\herm, \dots, U^\herm)\cdot\tensor{T}$ is used only in lines S1 and S6. We will see in \cref{sec_sub_gram,sec_sub_system_solve} how these lines can be executed without access to $\tensor{A}$. By circumventing the explicit computation of $\tensor{A}$, an overall speedup factor of over $3\times$ was obtained relative to the algorithm in \cite[Section 5.1]{Vannieuwenhoven2024b} for the computation of a Gr-rank $10$ decomposition of a tensor in $\wedge^6 \FF^{65}$, one of the most challenging cases considered in this article.}

\subsubsection{Flattenings}\label{sec_sub_flattening}
The $1$-flattening of a skew-symmetric tensor is computed as is suggested implicitly by \cref{lem_wedge_flattening}: loop over all $\binom{m}{d}$ coordinates of $\tensor{T}$ and put each of them into the $d$ correct positions of $\tensor{T}_{(1)}$ with the appropriate sign. 

The time complexity is $\BigO{ d^2 \binom{m}{d} }$ operations. 

\subsubsection{Basis}
While one can choose any basis of the image of $\tensor{T}_{(1)}$, it is recommended to choose an orthonormal one. 
Such a basis can be computed in many ways, including from an SVD, pivoted QR decomposition, or randomized methods.

In a numerical context it rarely happens that $\tensor{T}$ lies exactly in a lower-dimensional skew-symmetric subspace $\wedge^d \VS{V}$, because of various sources of imprecision such as approximation, computational, measurement, and round-off errors. Hence, we should seek a concise space \changed{close to} $\tensor{T}$, i.e., a space $\wedge^d \VS{V}$ such that the residual of the orthogonal projection of $\tensor{T}$ onto $\wedge^d \VS{V}$ is sufficiently small. If $U \in \FF^{m\times n}$ \changed{contains} an orthonormal basis \changed{of $\VS{V}$ in its columns}, then $\tensor{T}_\star = (U U^\herm, \dots, U U^\herm) \cdot\tensor{T}$ is the orthogonal projection of $\tensor{T}$ onto $\wedge^d \VS{V} \subset \wedge^d \VS{W}$, so step S0 of \cref{alg_main_computation} also applies in this approximate sense. To determine a suitable skew-symmetric space \changed{close to} $\tensor{T}$, we can use the truncated SVD \changed{of $\tensor{T}_{(1)}$. This results in a} quasi-optimal \changed{approximation $\tensor{T}_\star$ of $\tensor{T}$, which was already remarked in \cite[Section 2.2]{BK2017}.}

If the Gr-rank of $\tensor{T}$ is known, then the truncation multilinear rank for T-HOSVD should be chosen equal to $(dr,\dots,dr)$.
However, if it is unknown, then, under the assumption that \cref{alg_main_computation} applies, the multilinear rank of $\tensor{T}$ should be of the form $(dr,\dots,dr)$. When using a numerical thresholding criterion based on the singular values of $\tensor{T}_{(1)}$, this should be taken into account. For example, we can truncate based on the geometric means of $d$ consecutive singular values, i.e., 
\(
\sigma_i' := \sqrt[d]{ \sigma_{d(i-1)+1} \cdots \sigma_{di} },
\)
choosing the numerical $\epsilon$-rank as the largest index $i$ such that $\sigma_i' \ge \epsilon \sigma_1'$. \changed{The Gr-rank of $\tensor{T}$ can thus be determined based on the (numerical) rank of $\tensor{T}_{(1)}$, which is $\tensor{T}$'s order $d$ multiplied with the Gr-rank $r$.}

In my implementation, a standard rank-$rd$ truncated SVD \changed{based on LAPACK's standard SVD implementation was chosen} to determine an orthonormal basis of the approximate image of \changed{the $m \times \binom{m}{d-1}$ matrix} $\tensor{T}_{(1)}$. The asymptotic time complexity to \changed{compute an orthonormal basis of the column span of $\tensor{T}_{(1)}$ via a truncated SVD} is $\BigO{m^2 \binom{m}{d-1}}$ operations.

\changed{Note that a truncated randomized SVD may further lower the computational complexity at the expense of a bit of accuracy and determinism \cite{HMT2011,MT2020}. However, due to the unfavorable wide shape of $\tensor{T}_{(1)}$, specialized, structured tensor sketches should be used to achieve computational and memory efficiency. Such approaches have been extensively studied in the literature for unstructured tensors, e.g., \cite{WTSA2015,BBK2018,MB2020,CJ2021,MS2021}, but only sparse results for symmetric tensors exist \cite{WTSA2015}. See Pearce and Martinsson \cite{PM2025} for a recent survey on matrix and tensor sketching methods.
Determining efficient randomized sketches for (skew-)symmetric tensors represented intrinsically with $\binom{m}{d}$ coordinates seems to be an open problem and may require more advanced data structures with modest excess memory requirements \cite{SLVdGK2014}.}

\subsection{S1: Representing the map \texorpdfstring{$\delta_\tensor{A}$}{dA}} \label{sec_sub_gram}
To compute the kernel $\kappa_\tensor{A}$ of $\delta_\tensor{A}$, the straightforward approach consists of \changed{computing $\tensor{A}$ and} building the $\binom{n}{d} \times n^2$ matrix $J$ that represents $\delta_\tensor{A}$ in coordinates.
We can use standard numerical linear algebra libraries to compute its kernel, for example by extracting it from a full SVD. While this approach is accurate, it is relatively slow because of its $\BigO{\binom{n}{d} n^4}$ time complexity.

An alternative approach consists of computing the $n^2 \times n^2$ Gram matrix $G = J^\herm J$ whose kernel mathematically coincides with the one of $J$. 
Then, we only need to compute the kernel of a $n^2 \times n^2$ matrix, which requires $\BigO{n^6}$ operations if a standard SVD is used. 
As is often the case with multilinear maps, $G$ can be computed efficiently without constructing the large intermediate matrix $J$. Such an algorithm \changed{is described} next.

The Gram matrix $G$ is a Hermitian matrix in $(\VS{V} \otimes \VS{V}^*) \otimes \overline{(\VS{V} \otimes \VS{V}^*)}^*$, where the overline denotes the complex conjugation isomorphism. Hence, after choosing an orthonormal basis $(\vect{e}_1,\dots,\vect{e}_n)$ of $\VS{V} \simeq \FF^n$, it has a natural indexing by tuples $(i,j),(i',j')$, which \changed{we} abbreviate to $ij,i'j'$.
The entries of $G$ are by definition $G_{ij,i'j'} := \langle \delta_\tensor{A}(E_{ij}), \delta_\tensor{A}(E_{i'j'}) \rangle_F$, where $E_{ij} = \vect{e}_i \vect{e}_j^\herm$. Then, we compute
\begin{align}
G_{ij,i'j'}
\nonumber&= \sum_{k=1}^d \sum_{\ell=1}^d \langle  E_{ij} \cdot_k \tensor{A}, E_{i',j'} \cdot_\ell \tensor{A} \rangle_F\\
\nonumber&= \sum_{k=1}^d \mathrm{tr}( E_{ij} \tensor{A}_{(k)}\tensor{A}_{(k)}^\herm E_{i'j'}^\herm ) + \sum_{1\le\ell\ne k\le d} \mathrm{tr}\left( (E_{ij} \otimes I_n ) \tensor{A}_{(k,\ell)} \tensor{A}_{(k,\ell)}^\herm (I_n \otimes E_{i'j'})^\herm \right)\\
\label{eqn_gram_final}&= \sum_{k=1}^d \mathrm{tr}( \tensor{A}_{(k)}\tensor{A}_{(k)}^\herm E_{j'i'} E_{ij} ) + \sum_{1\le\ell\ne k\le d}  \mathrm{tr}\left( \tensor{A}_{(k,\ell)} \tensor{A}_{(k,\ell)}^\herm (E_{ij} \otimes E_{j'i'}) \right),
\end{align}
where $I_n$ is the matrix of $\Id_\VS{V}$.
Note that $E_{ij} \otimes E_{j'i'}$ is the tensor product of linear maps, so
$E_{ij}\otimes E_{j'i'} = (\vect{e}_i \vect{e}_j^\herm)\otimes (\vect{e}_{j'}\vect{e}_{i'}^\herm) = (\vect{e}_i\otimes\vect{e}_{j'})(\vect{e}_j\otimes\vect{e}_{i'})^\herm$. 
Let $G^{k,\ell} = \tensor{A}_{(k,l)} \tensor{A}_{(k,l)}^\herm$. We see that $G^{k,\ell} = G^{1,2}$ because all $(k,\ell)$-flattenings of the skew-symmetric tensor $\tensor{A}$ are the same, up to a sign $\pm1$, by \cref{lem_wedge_flattening}. Since we are multiplying the matrix with its adjoint, the sign is squared and disappears. The same observation holds for the $k$-flattenings and their Gram matrix $H = \tensor{A}_{(k)} \tensor{A}_{(k)}^\herm = \tensor{A}_{(1)}\tensor{A}_{(1)}^\herm$.

\changed{As remarked in \cref{sec_sub_hosvd} we do not explicitly form $\tensor{A}$. Instead, since $\tensor{A}=(U^\herm,\dots,U^\herm)\cdot\tensor{T}$, we determine that
\begin{align*}
H = \tensor{A}_{(1)} \tensor{A}_{(1)}^\herm 
&= U^\herm \tensor{T}_{(1)} (\overline{U} \otimes\dots\otimes \overline{U}) (\overline{U} \otimes\dots\otimes \overline{U})^\herm \tensor{T}_{(1)}^\herm U \\
&= U^\herm \tensor{T}_{(1)} \overline{({U} U^\herm \otimes\dots\otimes {U} U^\herm) \tensor{T}_{(1)}^\trans{}} U.
\end{align*}
Now we observe that $U U^\herm$ is an orthogonal projection onto the column span of $U$. Since the latter contains an orthonormal basis of the vector space $\VS{V} \subset \VS{W}$ and $\tensor{T} \in \wedge^d \VS{V}$, the projection $(\Id_{\VS{W}}, U U^\herm, \dots, U U^\herm) \cdot \tensor{T} = \tensor{T}$. Consequently,
\[
H = U^\herm \tensor{T}_{(1)} \overline{\tensor{T}_{(1)}^\trans{}} U = (U^\herm \tensor{T}_{(1)}) (U^\herm \tensor{T}_{(1)})^\herm.
\]
We analogously find the following expression for $G^{1,2}$:
\[
G^{1,2} = \bigl( (U \otimes U)^\herm \tensor{T}_{(1,2)} \bigr) \bigl( (U \otimes U)^\herm \tensor{T}_{(1,2)} \bigr)^\herm.
\]}%

By exploiting the above observations and the fact $E_{j'i'}E_{ij} = \delta_{ii'} E_{j'j}$, where $\delta_{ii'}$ is the Kronecker delta,
we can further simplify \cref{eqn_gram_final} to
\begin{align}\label{eqn_gram_final_final}
G_{ij,i',j'}
&= d \delta_{ii'} \vect{e}_{j}^\herm H \vect{e}_{j'} + d(d-1)  (\vect{e}_j \otimes \vect{e}_{i'})^\herm G^{1,2} (\vect{e}_i\otimes \vect{e}_{j'}).
\end{align}
Let $\sigma$ be the bijection that acts like 
\begin{align*}
\sigma : (\VS{V} \otimes \VS{V}) \otimes \overline{(\VS{V} \otimes \VS{V})}^* &\to (\overline{\VS{V}}^* \otimes \VS{V})\otimes \overline{( \overline{\VS{V}}^* \otimes \VS{V})}^*,\\
\vect{v}_j \otimes \vect{v}_{i'} \otimes \vect{v}_i^\herm \otimes \vect{v}_{j'}^\herm &\mapsto \vect{v}_i^\herm \otimes \vect{v}_j \otimes \vect{v}_{i'} \otimes \vect{v}_{j'}^\herm.
\end{align*}
Note that this map consists of permuting the coordinates of $G^{1,2}$.

\changed{In conclusion}, \cref{eqn_gram_final_final} can be computed efficiently as in \cref{alg_gram}.

\begin{algorithm}[tb]
\caption{Efficient Gram matrix construction}
\label{alg_gram}
\algrenewcommand\alglinenumber[1]{{\small S{\the\numexpr#1-1\relax}.}}
\begin{algorithmic}[1]
\Require{Skew-symmetric tensor $\tensor{T}\in \wedge^d \VS{V} \subset \wedge^d \VS{W}$ \changed{and an orthonormal basis $U$ of $\VS{V}$.}}
\State{\changed{$M_{12} \gets (U\otimes U)^\herm \tensor{T}_{(1,2)}$}}
\State{$G \gets \sigma\bigl( d(d-1) M_{12} M_{12}^\herm \bigr)$}
\State{\changed{$M \gets U^\herm \tensor{T}_{(1)}$}}
\State{$H \gets d M M^\herm$}
\For{$i \gets 1, \dots, n$}
\State{$G_{i,:,i,:} \gets G_{i,:,i,:} + H$}
\EndFor{}
\State \Return $G$.
\end{algorithmic}
\end{algorithm}

The $(1,2)$-flattening taking $\wedge^d \VS{W}$ to $\VS{W}^{\otimes 2} \otimes \wedge^{d-2} \VS{W}$ can be computed similarly as in \cref{sec_sub_flattening}: loop over all $\binom{m}{d}$ coordinates of $\tensor{T}$ and put them with the correct sign in the $d^2$ possible positions (requiring $\BigO{d}$ operations to compute each linear index).
With this information, the asymptotic time complexity of \cref{alg_gram} is determined to be
\changed{\begin{multline*}
 \underbrace{d \binom{d}{2} \binom{m}{d}}_\text{flattening in S0} + 
 \underbrace{n^2 m^2 \binom{m}{d-2}}_\text{matmul in S0} + 
 \underbrace{n^4}_\text{$\sigma$ in S1} + 
 \underbrace{n^4 \binom{m}{d-2}}_\text{matmul in S1} +
 \underbrace{d^2 \binom{m}{d}}_\text{flattening in S2} + 
 \underbrace{n m \binom{m}{d-1}}_\text{matmul in S2} + 
 \underbrace{n^2 \binom{m}{d-1}}_\text{matmul in S3} + 
 \underbrace{n^3}_\text{S4-S6}\\
= \BigO{ n^2 m^2 \binom{m}{d-2} }
\end{multline*}}
operations, \changed{where ``matmul'' refers to a matrix multiplication.}

\subsection{S2: Computing the kernel} \label{sec_sub_kernel}
The kernel of a linear map can be computed with numerical linear algebra libraries. \changed{It can be} computed accurately with a full SVD of the \changed{$n^2 \times n^2$} Hermitian matrix $G$, \changed{after which} the right singular vectors $\vect{k}_i$ corresponding to the $q = r(d^2 -1)$ smallest singular values \changed{are extracted}. These vectors, obtained as a vector of $n^2$ coordinates, \changed{are elements of} $\VS{V} \otimes \overline{\VS{V}}^*$, so they can be reshaped into $n \times n$ matrices, resulting in a unitary basis $\tensor{K}=(K_1, \dots, K_q) \in (\FF^{n \times n})^{\times q} \simeq \FF^{n\times n\times q}$ of the kernel $\kappa_\tensor{A}$ of $\delta_\tensor{A}$.

\subsection{S3: Performing an EVD} \label{sec_sub_s2} \label{sec_sub_sub_sample_kernel}

For simplicity, I choose $K = K_q$, the element that numerically lies closest to the kernel of $G$.
This choice may not satisfy the genericity condition from \cref{lem_evd}, but it is simple, efficient, and leads empirically to good accuracy.

An EVD of $K = W \Lambda W^{-1}$ can be computed with standard numerical linear algebra libraries. These libraries will compute it over $\mathbb{C}$ if there are pairs of complex conjugate eigenvalues.
This means that in the case of a real input tensor $\tensor{T}$, \cref{alg_main_computation} could require computations over $\mathbb{C}$ from step S3 onward. While this of no consequence in theory, it is somewhat inconvenient in practice, among others because it increases the cost of field multiplications by a factor of at least $3$ (using Gau\ss's algorithm).
Inspecting the proofs of \cref{lem_evd,lem_permutation} carefully shows that in the case of a real $K \in \mathbb{R}^{n \times n}$ \cref{lem_permutation} also holds if we compute a \emph{real} similarity transformation $K = W B W^{-1}$, with real $W \in \mathrm{GL}(\mathbb{R}^n)$, to a block diagonal form $B\in\mathbb{R}^{n \times n}$ with $1\times 1$ and $2 \times 2$ matrices on the diagonal \citep[Corollary 3.4.1.10]{HJ2013}; the $1 \times 1$ blocks correspond to the real eigenvalues, while the $2 \times 2$ blocks correspond to a pair of complex conjugate eigenvalues.
It is therefore recommended reducing a real $K$ with a real similarity transformation to block diagonal form.

LAPACK does not implement real similarity transformations to a block diagonal form, so I simply compute an EVD and check afterward if pairs of complex conjugate eigenvalues are present. For each pair of complex conjugate eigenvalues, it suffices to replace the corresponding pair of complex conjugate eigenvectors $\vect{v}$ and $\overline{\vect{v}}$ by the real vectors $\frac{1}{\sqrt{2}} (\vect{v} + \overline{\vect{v}})$ and $\frac{\imath}{\sqrt{2}} (\vect{v} - \overline{\vect{v}})$, respectively.

The computational complexity is dominated by the cost of computing an EVD of an $n \times n$ matrix, which asymptotically requires $\BigO{n^3}$ operations.

\subsection{S4: Partitioning into elementary tensors} \label{sec_sub_partitioning}

We can uniformly sample a unit-norm matrix $K'$ from $\kappa_\tensor{A}$ by sampling a random Gaussian vector $\vect{k} \sim N(\vect{0}, I)$ in $\FF^q$ and setting $K' = \sum_{i=1}^{q} k_i K_i / \|\vect{k}\|$. With probability $1$, this $K'$ is generic in the sense of \cref{lem_evd}.

The permutation is determined by first computing 
\[
L 
= (W^{-1}, W^\trans{}, \vect{k}/ \|\vect{k}\|) \cdot \tensor{K}
= \changed{W^{-1} \bigl( (\vect{k}/ \|\vect{k}\|) \cdot_3 \tensor{K} \bigr) W}.
\]
Then, for increasing $i=1,\ldots,n$, we greedily build a permutation $\vect{p}$ by appending the indices $\vect{i}$ of the $d$ largest elements in the $i$th column of $L$ to $\vect{p}$, provided none of the indices in $\vect{i}$ already appear in $\vect{p}$. If one of them does, then we proceed with the next column $i+1$ without appending to $\vect{p}$.

If $L$ is (approximately) a permutation of a block diagonal matrix $B$, then the above process recovers a vector $\vect{p}$ representing the permutation $P: [d] \to [d], i \mapsto p_i$ with the property that $L = P B P^\trans{}$. This is how $P$ from S4 in \cref{alg_main_computation} is obtained in my implementation.

The asymptotic time complexity for S4 is proportional to
\[
 \underbrace{ n^2 q + 3 n^3 }_\text{compute $L$} \;+\; \underbrace{n^2 \lg n}_\text{build $\vect{p}$} = \BigO{n^3}
\]
operations, assuming an $\BigO{n \lg n}$ sorting algorithm and a tree data structure with amortized $\BigO{\lg n}$ lookup and insertion cost are used to build $\vect{p}$.

\subsection{S5: Eigenbasis refinement} \label{sec_sub_sub_refinement}

After running steps S3 and S4 of \cref{alg_main_computation}, we obtain an initial factor matrix $V = \begin{bmatrix} V_1 & \dots & V_r \end{bmatrix}$. A consequence of \cref{thm_main} is that each $\VS{V}_i := \mathrm{span}(V_i)$ is a \emph{$\tensor{K}$-invariant subspace} \citep[Definition 1.3.16]{HJ2013} of all the matrices in $\kappa_\tensor{A}$, i.e.,
\(
 \mathrm{span}( K_j V_i ) = \VS{V}_i,
\)
for all $j=1,\dots,q$. Mathematically, \cref{lem_evd} ensures that $\VS{V}_i$ can be extracted from an EVD of one generic element $K$ in $\kappa_\tensor{A}$.
Numerically, however, the accuracy of extracting an invariant subspace of $K$ is limited by Sun's condition number \citep[Section 4.2]{Sun1991}, which depends nontrivially on the separation gap between the invariant subspaces $\VS{V}_i$ and $\oplus_{j\ne i} \VS{V}_j$.
The condition number of computing a $\tensor{K}$-invariant subspace is, a priori, different from Sun's condition number for computing the corresponding invariant subspace of $K$. 

In light of the numerical (in)stability results in \citep{BBV2019,BNV2023} and the numerical experiments in \cref{sec_sub_impact_refinement}, \changed{we will} improve the estimates of each invariant subspace $\VS{V}_i$ independently by the \emph{$\tensor{K}$-subspace iteration} method from \citep[Algorithm 2]{SS2024}; it is recalled in \cref{alg_fixed_point_improv}. It can be viewed as an iterated version of \citep[Algorithm 1]{AACCPV2021} and as a natural extension of classic subspace iteration \citep{Saad2011}. The notation in step S2 means that the first $d$ columns of $U$ are copied into $Q$.  

\begin{algorithm}[tb]
\caption{$\tensor{K}$-subspace iteration (Seghouane and Saad \citep[Algorithm 2]{SS2024})}
\label{alg_fixed_point_improv}
\algrenewcommand\alglinenumber[1]{{\small S{\the\numexpr#1-1\relax}.}}
\begin{algorithmic}[1]
\Require{A tensor $\tensor{K} = (K_1, K_2, \dots K_q)\in \FF^{n\times n \times q}$.}
\Require{A matrix $Q \in \FF^{n\times d}$ spanning an approximate $\tensor{K}$-invariant subspace.}
\Require{The number of iterations $p \in \mathbb{N}.$}
\For{$i \gets 1, \dots, p$}
\State{$U, S, V \gets \mathrm{SVD}\bigl( \begin{bmatrix}K_1 Q & K_2 Q & \dots & K_q Q \end{bmatrix} \bigr)$}
\State{$Q \gets U[:, 1:d]$}
\EndFor{}
\State \Return $Q$.
\end{algorithmic}
\end{algorithm}

This then leads to a time complexity proportional to 
\[
 \underbrace{n^2}_\text{compute $WP$} \;\;+\;\; \underbrace{2 p q n^2 d}_\text{S1 of \cref{alg_fixed_point_improv}} \;\;+\;\; \underbrace{p n d}_\text{S2 of \cref{alg_fixed_point_improv}} = \mathcal{O}( d^2 n^3 ),
\]
operations in S5 in \cref{alg_main_computation}, assuming that $p$ can be treated as a constant. My implementation uses $p=10$ iterations to refine the eigenbasis.

\subsection{S6: Solving the linear system}\label{sec_sub_system_solve}
Solving the overdetermined linear system in S6 using a standard QR decomposition is not recommended, as the matrix whose columns are the skew-symmetric tensors $\vect{v}_i^1\wedge\dots\wedge\vect{v}_i^d$ has size $\binom{n}{d} \times r$, which is very costly both in terms of memory and time.

To circumvent most of the above dual bottleneck, \changed{we can observe that the required coefficients $x_i$ in step S6 can be obtained by evaluating the multilinear map represented by the original tensor $\tensor{T}$ on appropriate vectors. This is understood as follows.
Let $V_i \in \FF^{n \times d}$, $i=1,\dots,r$, be the refined invariant subspaces resulting from the previous step. By our assumptions, $V = \begin{bmatrix} V_1 & \dots & V_r \end{bmatrix} \in \FF^{n \times n}$ is an invertible matrix. Let 
\[
V^{-1} := \begin{bmatrix} \Gamma_1^\herm \\ \vdots \\ \Gamma_r^\herm \end{bmatrix} \text{ with } \Gamma_i^\herm \in\FF^{d\times n}, \quad\text{so that}\quad \Gamma_j^\herm V_i = \delta_{ij} I_d \;\text{ for all }\; 1 \le i,j \le r,
\]
where $\delta_{ij}$ is the Kronecker delta and $I_d$ the $d\times d$ identity matrix.
We can express
\[
\tensor{A} 
= \sum_{i=1}^r x_i \vect{v}_i^1 \wedge\dots\wedge \vect{v}_i^d
= \sum_{i=1}^r x_i (V_i \vect{e}_1) \wedge\dots\wedge (V_i \vect{e}_d),
\]
where $V_i \vect{e}_j$ selects the $j$th column $\vect{v}_i^j$ of $V_i$.
Then, by multilinearly multiplying the original tensor $\tensor{T}$ with $\Gamma_j^\herm U^\herm$ on all factors, we find for every $j=1,\dots,r$ that
\[
\bigl( \Gamma_j^\herm U^\herm, \dots, \Gamma_j^\herm U^\herm \bigr) \cdot \tensor{T}
= \sum_{i=1}^r x_i (\Gamma_j^\herm V_i \vect{e}_1) \wedge\dots\wedge (\Gamma_j^\herm V_i \vect{e}_d)
= x_j \vect{e}_1 \wedge\dots\wedge \vect{e}_d.
\]
Combining this with \cref{eqn_wedge_product}, we can conclude that 
\[
\bigl(\vect{e}_1^\herm \Gamma_j^\herm U^\herm, \dots, \vect{e}_d^\herm \Gamma_j^\herm U^\herm \bigr) \cdot \tensor{T} 
= \frac{x_j}{d!} (\vect{e}_1^\herm, \dots, \vect{e}_d^\herm) \cdot \sum_{\sigma\in\mathfrak{S}([d])} \sign(\sigma) \vect{e}_{\sigma_1} \otimes\dots\otimes \vect{e}_{\sigma_d}
= \frac{x_j}{d!};
\]
the final equality can also be understood as a special case of \cref{eqn_alt_map_det}. This shows each $x_j / d!$ can be computed by evaluating $\tensor{T}$, considered as a multilinear map, on the tuple of vectors $(U \Gamma_j \vect{e}_1, \dots, U \Gamma_j \vect{e}_d)$.}

For general tensors, a multilinear multiplication can be computed efficiently by a sequence of \changed{flattenings and matrix-vector} multiplications. If $\tensor{T} \in \VS{W}^{\otimes d}$ and \changed{$\vect{f}_1, \dots, \vect{f}_d \in \VS{W}$ are vectors,} then the multilinear \changed{multiplication
\(
 (\vect{f}_1^\herm, \dots, \vect{f}_d^\herm) \cdot \tensor{T}
\)}
is computed efficiently as in \cref{alg_multilinear_mult}. 

\begin{algorithm}[tb]
\caption{Efficient multilinear multiplication}
\label{alg_multilinear_mult}
\algrenewcommand\alglinenumber[1]{{\small S{\the\numexpr#1-1\relax}.}}
\begin{algorithmic}[1]
\Require{Tensor $\tensor{T}\in \VS{W}\otimes\dots\otimes \VS{W}$ and \changed{vectors $\vect{f}_1,\dots,\vect{f}_d \in \VS{W}$}.}
\State{$\tensor{A}^0\gets\tensor{T}$}
\For{$k \gets 1, \dots, d$}
\State{$M_k \gets \tensor{A}_{(1)}^{k-1}$}
\State{$\tensor{A}_{(\emptyset; [d-k])}^k \gets \vect{f}_k^\herm M_k$}
\EndFor{}
\State \Return $\tensor{A}^d$.
\end{algorithmic}
\end{algorithm}

For skew-symmetric tensors, we can apply \cref{alg_multilinear_mult} as well. \changed{Observe that $\tensor{A}^k \in \wedge^{d-k} \VS{W}$ because in the $k$th iteration we multiply $\tensor{A}^{k-1} \in \wedge^{d-k+1}\VS{W}$ with $\vect{f}_k^\herm : \VS{W} \to \FF$ in the first factor by applying $\vect{f}_k^\herm$ to $\tensor{A}^{k-1}_{(1)} \in \VS{W} \otimes (\wedge^{d-k} \VS{W})^*$.
This results in an element of $\FF \otimes (\wedge^{d-k}\VS{W})^*$.  This is a row vector whose entries represent $\tensor{A}^{k}$ as an element of $\wedge^{d-k}\VS{W}$. The $1$-flattening can be computed as in \cref{sec_sub_flattening}, and the inverse of the $(\emptyset,[d])$-flattening consists of reinterpreting the row vector as a column vector, which requires no operations in practice.}
\changed{Exploiting this} in the implementation of \cref{alg_multilinear_mult} \changed{for skew-symmetric tensors}, yields an asymptotic time complexity of
\begin{equation}\label{eqn_complexity_mm}
C_\text{MM}^{m} =
\sum_{k=1}^{d} \Biggl(\underbrace{(d-k)^2 \binom{m}{d-k}}_{\text{$1$-flattening in S2}} + \underbrace{m \binom{m}{d-k}}_{\text{matmul in S3}} + \underbrace{0}_{\text{inverse flattening}}\Biggr)
= \BigO{ d (m + d^2) \binom{m}{d-1} }.
\end{equation}

\changed{In conclusion, the coefficients $x_j = d! \cdot ( (U \Gamma_j \vect{e}_1)^\herm, \dots, (U \Gamma_j \vect{e}_d)^\herm)\cdot \tensor{T}$ are computed by $r$ scaled multilinear multiplications via \cref{alg_multilinear_mult}. Therefore, the asymptotic time complexity of S6 is
\[
 \underbrace{ n^3 }_\text{compute $V^{-1}$} \;+\; \underbrace{r \cdot C_\text{MM}^{m}}_\text{multilinear multiplications} = \BigO{ n (m + d^2) \binom{m}{d-1} }
\]
operations.}

\subsection{S7: Computing the factor matrix} \label{sec_sub_factor_matrix}
This step can be implemented using numerical linear algebra libraries at an asymptotic cost of $\BigO{mndr}$ operations.

\section{Numerical experiments} \label{sec_numerical_experiments}

In this section, numerical experiments \changed{are presented} with my Julia \changed{v1.12.4} implementation of \cref{alg_main_computation} \changed{following the implementation choices from} \cref{sec_numerical_algorithm}. The only performance-critical external library used for the main implementation was \verb|LinearAlgebra.jl|, which relies on Julia's \verb|libopenblas64| and is configured to use $8$ threads. No explicit multithreading or parallelism is used elsewhere; most of the computationally demanding steps rely on the OpenBLAS implementation.
The implementation, including all code necessary to perform the experiments and generate the figures below can be found at \url{https://gitlab.kuleuven.be/u0072863/grassmann-decomposition}.

All the experiments \changed{were applied to} synthetic random tensors in $\wedge^d \FF^m$. A ``noiseless random Gr-rank-$r$ tensor'' is generated as follows. A random decomposition factor matrix $V\in\FF^{m \times dr}$ is sampled from the Gaussian ensemble, meaning $v_{ij}$ is sampled independently from the standard normal distribution $N(0,1)$ for all entries. For each $i=1,\ldots,r$, the elementary factor matrices $V_i$ are then normalized so that each column of $V_i$ has the same norm. The corresponding skew-symmetric tensor is generated by computing the wedge products of columns of the elementary factor matrices (with the algorithm from \cref{sec_sub_wedge}), and then summing all of these elementary skew-symmetric tensors. The tensor is then normalized so that its representation has unit Euclidean norm. Noise of level $\sigma$ can be added to the Gr-rank-$r$ tensor $\tensor{A}$, by sampling a Gaussian vector in $\FF^{\binom{m}{d}}$ in which each entry is sampled independently from $N(0,1)$. The vector is normalized to unit length, multiplied with $\sigma$, and added to $\tensor{A}$.

The performance measures that are used in the experiments are standard: a \textit{relative backward error}, a \textit{relative forward error}, and the wall clock \textit{execution time}. Let $\tensor{A} = \sum_{i=1}^r \vect{w}_i^1 \wedge\dots\wedge \vect{w}_i^d$ be the true Grassmann tensor and $\widehat{\tensor{A}} = \sum_{i=1}^r \widehat{\vect{w}}_i^1 \wedge\dots\wedge \widehat{\vect{w}}_i^d$ be its approximation computed by the proposed numerical algorithm. Then, the relative backward and forward errors are, respectively,
\[
\epsilon_{\text{b}} =
\frac{\| \widehat{\tensor{A}} - \tensor{A} \|_F}{\| \tensor{A} \|_F}
\quad\text{and}\quad 
\epsilon_{\text{f}} = \max_{i=1,\dots,r} \mathrm{dist}_{\mathrm{Gr}(d,\FF^m)}(\vect{w}_i^1 \wedge\dots\wedge \vect{w}_i^d, \widehat{\vect{w}}_i^1 \wedge\dots\wedge \widehat{\vect{w}}_i^d),
\]
where $\mathrm{dist}_{\mathrm{Gr}(d,\FF^m)}$ is the \emph{chordal distance} on the Grassmannian $\mathrm{Gr}(d,\FF^m)$:
\[
 \mathrm{dist}_{\mathrm{Gr}(d,\FF^m)} (U, \widehat{U}) = \frac{1}{\sqrt{2}} \| U U^\herm - \widehat{U} \widehat{U}^\herm \|_F
\]
if $U, \widehat{U} \in \FF^{m \times d}$ are matrices with orthonormal columns (in the Frobenius inner product) whose column spans represent the subspaces between which the distance is measured. Note that the order of the summands in a Grassmann decomposition is ambiguous. To determine the (hopefully) correct matching, the orthogonal projection of the elementary Grassmann tensors $\widehat{\tensor{A}}_i = \widehat{\vect{w}}_j^1\wedge\dots\wedge\widehat{\vect{w}}_j^d$ onto the $\tensor{A}_i=\vect{w}_i^1\wedge\dots\wedge\vect{w}_i^d$ are efficiently computed and organized into an $n \times n$ matrix $P$. In the case of a perfect decomposition, $P$ will be a permutation matrix that indicates how the $\widehat{\tensor{A}}_i$'s should be permuted to match up with the $\tensor{A}_i$'s. By continuity, $P$ will be close to a permutation matrix when $\tensor{A}\approx\widehat{\tensor{A}}$. We can then search the largest element in each column to determine a suitable permutation.

All experiments were executed on \verb|aerie|, a computer system running Xubuntu 24.04 LTS and featuring an AMD Ryzen 7 5800X3D (8 physical cores, 3.4GHz \changed{maximum} clock speed, 96 MB L3 cache) and $4\times32$ GB DDR4--3600 main memory.

\subsection{Impact of eigenbasis refinement} \label{sec_sub_impact_refinement}

The effect of the number of iterations $p$ of \cref{alg_fixed_point_improv} on the final backward error is investigated first. 
\changed{For each} $p = 0, 1, 2, 4, 8, 16, 32$, \changed{we independently sample} $100$ noiseless random Gr-rank-$15$ tensors in $\wedge^3 \mathbb{R}^{50}$. The Grassmann decompositions \changed{of these $700$ tensors} are then computed with \cref{alg_main_computation}. The resulting relative backward errors are visualized in \cref{fig_csi}.

\begin{figure}[tb]\centering
\includegraphics[width=.8\textwidth]{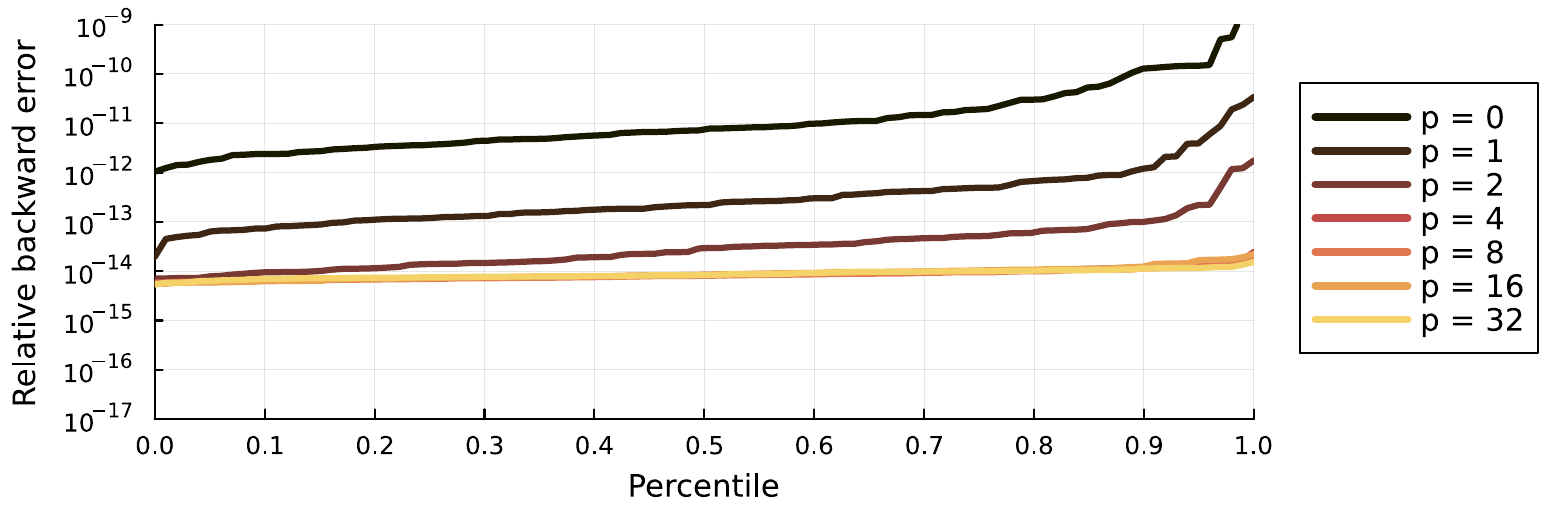}
\caption{The relative backward error of decomposing $100$ random Gr-rank-$15$ tensors in $\wedge^3 \mathbb{R}^{50}$ for a varying number of iterations $p$ in \cref{alg_fixed_point_improv}.}
\label{fig_csi}
\Description{A plot showing that the relative backward error of decomposing $100$ random Gr-rank-$15$ tensors in $\wedge^3 \mathbb{R}^{50}$ is substantially improved by using at least $p=4$ iterations in \cref{alg_fixed_point_improv}. Using this many iterations, the error reduces from about $10^{-11}$ at $p=0$ to about $10^{-14}$ at $p \ge 4$. Not much benefit is observed from increasing $p$ beyond $4$, as the corresponding curves are visually almost indistinguishable.}
\end{figure}

A dramatic improvement is observed from $p=0$ to $p=4$ of about $3$ orders of magnitude. Increasing $p$ further does not appear to offer any benefit in this experiment. Based among others on this experiment, $p=10$ was chosen as the default value of the number of $\tensor{K}$-subspace iterations in \cref{alg_fixed_point_improv}, offering a good trade-off between additional computational cost and accuracy.

\subsection{Computation time breakdown}

Next, we verify empirically insofar as the complexity estimates in \cref{tab_complexity} are representative of true performance. To validate this, one noiseless random Gr-rank-$r$ decomposition in $\wedge^d \mathbb{R}^{65}$ \changed{is generated}. The experiment aims to highlight and isolate the impact of increasing the order $d$ of the tensor, as the relative importance of the various steps in \cref{alg_main_computation} depends crucially on $d$ and the fraction $\frac{m}{n}$. For this reason, $n$ is chosen as the least common multiple of $3,4,5,$ and $6$, i.e., $n=60$, so that the fraction $\frac{m}{n} = \frac{65}{60}$ is constant and the Gr-rank $r = n / d$ is an integer for $d=3,4,5,6$.
The resulting breakdown of the execution time is shown in \cref{fig_fraction}.

\begin{figure}[tb]\centering
\includegraphics[width=.8\textwidth]{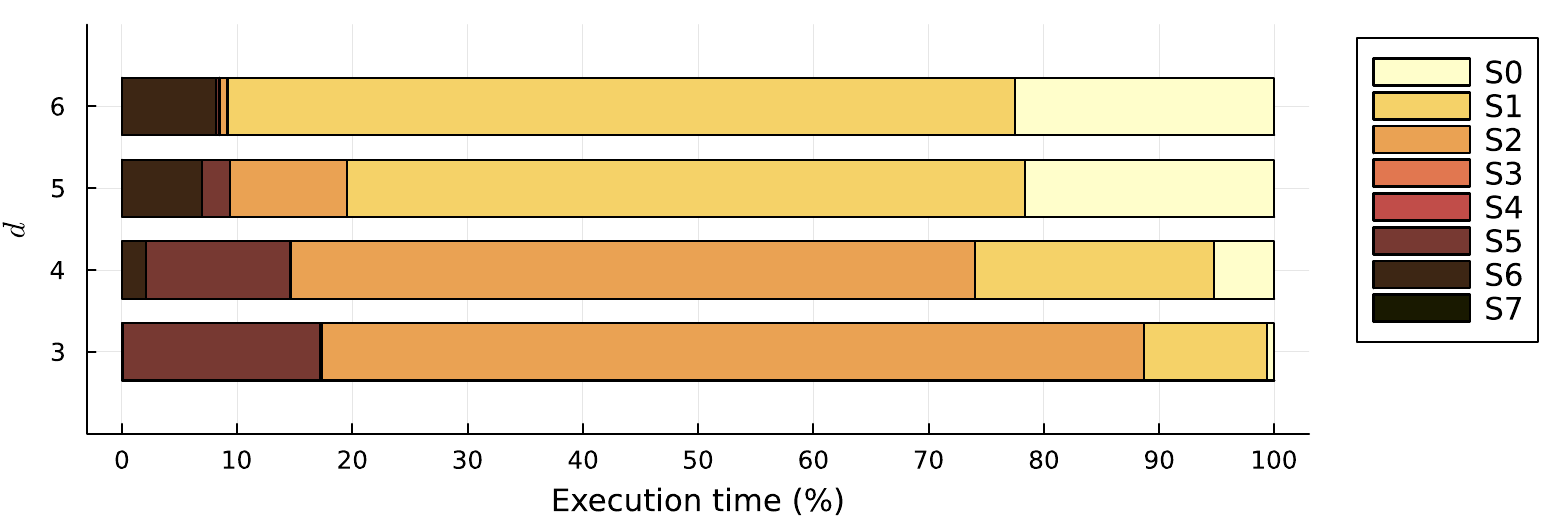}
\caption{Relative breakdown of the execution times of the steps in \cref{alg_main_computation} for a random Grassmann tensor in $\wedge^d \mathbb{R}^{65}$ of rank $r = 60/d$ for $d=3,4,5,6$. The absolute total execution times were $4.1$s, $7.6$s, $60.5$s, and $1208.3$s for, respectively, $d=3,4,5,$ and $6$.}
\label{fig_fraction}
\Description{The figure shows the following. For $d=6$, about $65\%$ of the time is spent on S1, $25\%$ on S0, and $10\%$ on S6. For $d=5$, about $55\%$ of the time is spent on S1, $25\%$ on S0, $10\%$ on S2, and $5\%$ on S6. For $d=4$, about $60\%$ of the time is spent on S2, $20\%$ on S1, and $10\%$ on S6. For $d=3$, about $70\%$ of the time is spent on S2, $20\%$ on S6, and $10\%$ on S1.}
\end{figure}

We observe in \cref{fig_fraction} that for all the orders $d$, steps S3 (EVD), S4 (block diagonalization), and S7 (extracting the factor matrix) virtually take up no time relative to the total execution time. This aligns well with the theoretical complexities in \cref{tab_complexity}, as these steps are of order $n^3$, while the others are at least of order $n^4$. 

Another observation that aligns well with \cref{tab_complexity} is the shrinking portion of the kernel computation in S2. For $d=3$, the $n^6$ complexity will usually be the dominant cost for \cref{alg_main_computation}. However, as this cost is independent of $d$, it will be quickly overtaken by S0, S1, and S6, whose complexity grows exponentially in $d$. Indeed, the kernel computation takes up a relatively insignificant amount of time as $d \ge 5$, while for $d \le 4$, it is empirically the dominant cost of running \cref{alg_main_computation}. 

\Cref{tab_complexity} suggests that the complexity of the eigenbasis refinement in S5 should be quite strongly dominated by the complexity of steps S0 and S1. However, in \cref{fig_fraction} we observe that S5 takes up a visible fraction of the execution time for $d \le 5$. This is attributed to the relatively large $10 d^2$ (the $10$ originates from the number of iterations for \cref{alg_fixed_point_improv}) in front of the $n^3$, whereas for S0 and S5 the $m^{d+1}$ complexity is scaled by the moderating coefficient $\frac{d}{(d-1)!}$.

Finally, the most computationally significant steps for large $d$ according to \cref{tab_complexity} are S0 and S1. Step S6 can also be significant if $m \approx n$, such as in this experiment, though it too will become relatively unimportant as $d$ keeps growing. \Cref{fig_fraction} empirically confirms the significance of S0, S1, and S6. The relative fraction of S0 and S1 do not appear to be in line with the theoretical prediction. I attribute this to the difference in the operations that underlie the leading complexity terms. In the case of S0, the leading term originates from an SVD, while for S1 it originates from a matrix multiplication. 
The difference in performance is then largely explained by (i) the lower constant in front of the time complexity of matrix multiplication, (ii) the higher attainable peak throughput for matrix multiplication, and (iii) the better parallel efficiency of matrix multiplication on shared-memory systems.

\subsection{Performance on random model tensors}

The next experiment aims to show the overall performance of the proposed \cref{alg_main_computation} on the three performance measures. I generate one noiseless random Grassmann tensor of rank $r$ in $\wedge^d \mathbb{R}^{dr}$ for all $3 \le d \le 10$ and $1 \le r \le 33$, subject to the constraint that $(dr)^2 \le 10^4$ and \changed{$2r \le \sqrt[d]{75 \cdot 10^6}$.} The numerical implementation of \cref{alg_main_computation} from \cref{sec_numerical_algorithm} is then used to decompose these tensors. The results are shown in \cref{fig_rank_and_degree}. 

\begin{figure}[tb]\centering 
\includegraphics[width=.8\textwidth]{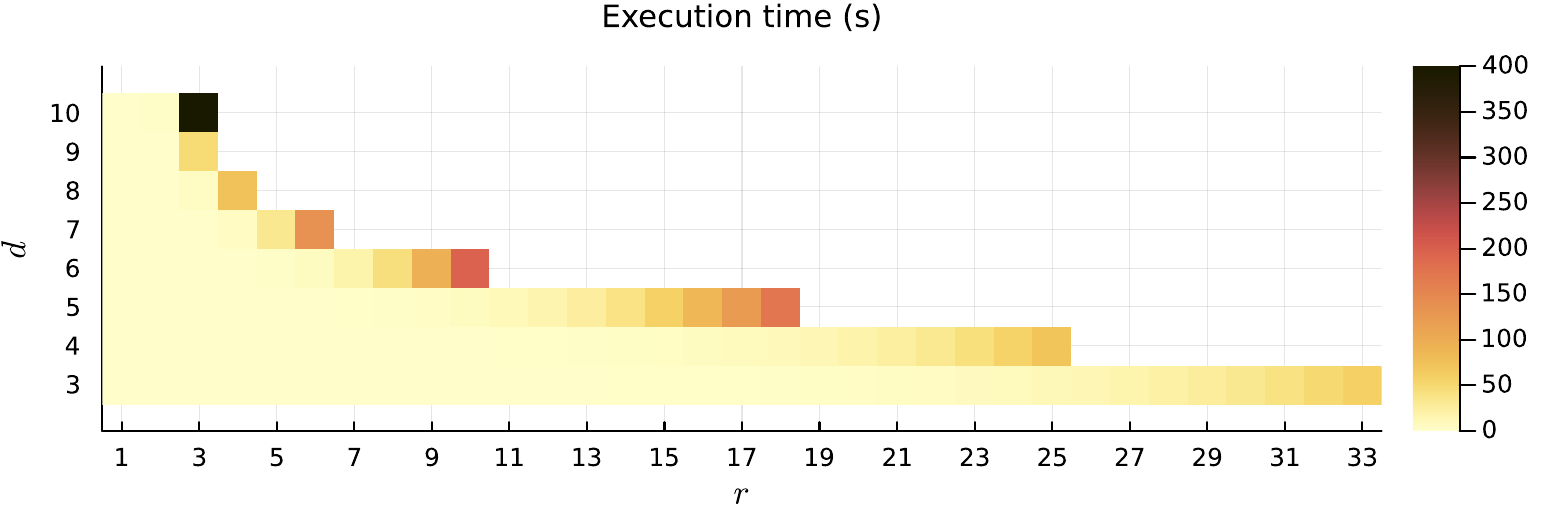}
\includegraphics[width=.8\textwidth]{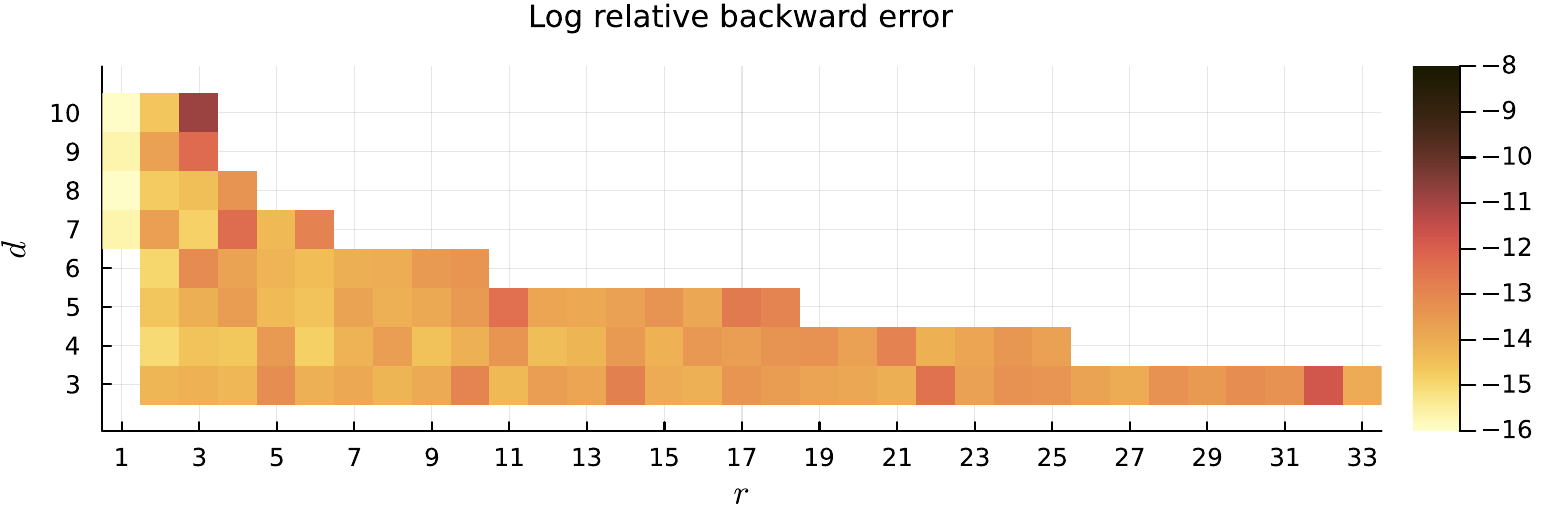}
\includegraphics[width=.8\textwidth]{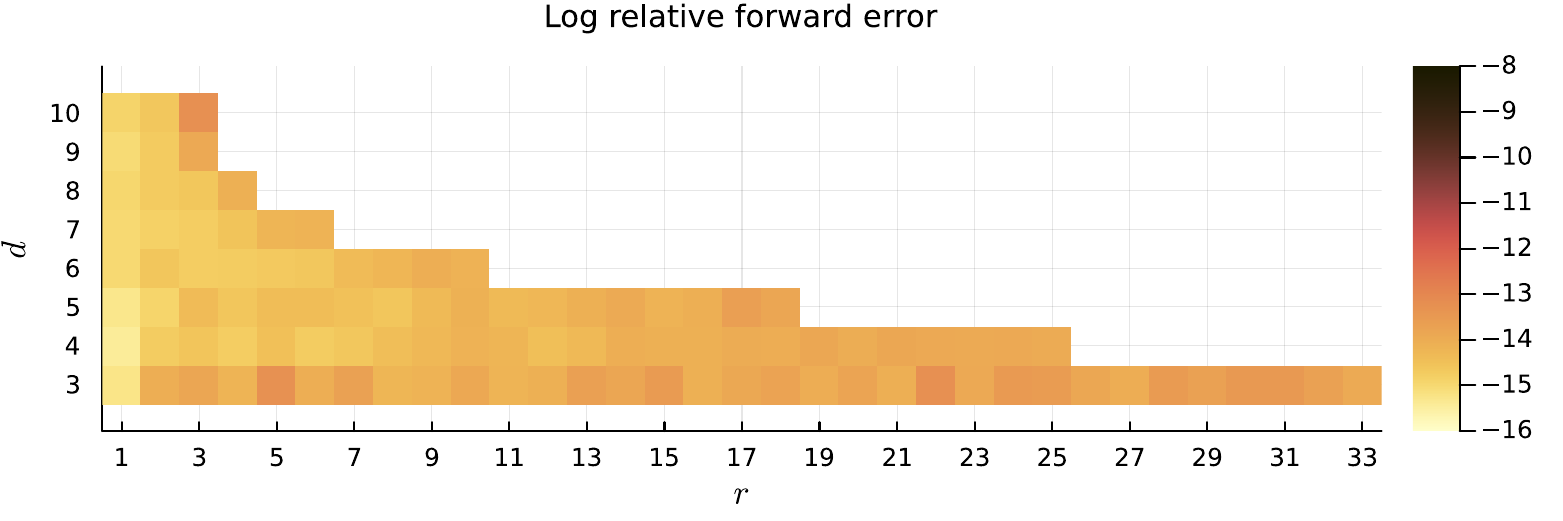}
\caption{The execution time in seconds and the base-$10$ logarithm of the relative backward and forward errors for decomposing noiseless random Gr-rank-$r$ tensors in $\wedge^d \FF^{dr}$ for feasible combinations of $3 \le d \le 10$ and $1 \le r \le 33$.}
\label{fig_rank_and_degree}
\Description{Complicated graphic showing small relative errors (backward and forward) in the range of $10^{-12}$ to $10^{-16}$.}
\end{figure}

The first panel in \cref{fig_rank_and_degree} shows the total execution time for decomposing the tensor. No particular observations stand out above and beyond what we knew theoretically from the complexity analysis in \cref{tab_complexity}. It is nonetheless interesting to see the absolute numbers and to observe the very competitive timings for $d=3$ up to $r \le 33$ (i.e., $n=99$), requiring less than $1$ minute. Note that by attempting to treat some of the higher-order tensors as general tensors, i.e., disregarding the skew-symmetric structure, we would not be able to compute their Grassmann decomposition. For example, a Gr-rank-$5$ tensor in $\wedge^7 \mathbb{R}^{35}$ requires $514.7$GB of storage as a general tensor, versus only $53.8$MB as a skew-symmetric tensor---a difference of almost four orders of magnitude.

The second panel of \cref{fig_rank_and_degree} illustrates the relative backward error. For all tested problems, the obtained Grassmann decomposition was very close (all \changed{$\epsilon_\text{b} \le 2\cdot10^{-12}$ but one}) to the original tensor in relative error. Note the seemingly missing values for $3 \le d \le 6$ for $r=1$. The reason is that the relative backward errors are exactly equal to $0$ in these cases, so their base-$10$ logarithm is $-\infty$. Note that for $r=1$, \changed{we do not need to} execute \cref{alg_main_computation} completely, as we can stop after S0 because of \cref{prop_rank1}.

The third panel of \cref{fig_rank_and_degree} shows the relative forward error. 
We observe visually that there is little difference between the backward and forward error. This suggests that the (projective) Grassmann decomposition problem seems to be well-conditioned for random Grassmann tensors when measuring errors in the space of tensors with the Frobenius norm and errors in the (projective) output space $\mathrm{Gr}(d,\mathbb{R}^n) \times\dots\times \mathrm{Gr}(d,\mathbb{R}^n)$ as the $\infty$-norm of the chordal distances in the respective Grassmannians. The condition number of Grassmann decomposition (with respect to the product norm in the codomain) can be determined by applying the techniques from \citep{BV2018}. Investigating this was out of the scope of the present work.

\subsection{Performance in the noisy regime}\label{sec_sub_noisy}

The final experiment investigates insofar as \cref{alg_main_computation}, which was designed as a decomposition algorithm for skew-symmetric tensors admitting an exact Grassmann decomposition, can cope with model violations. That is, how robust is the algorithm against arbitrary perturbations of an exact, true Grassmann decomposition. For this, $100$ noisy complex Gr-rank-$10$ decompositions in $\wedge^d \mathbb{C}^{50}$ \changed{are generated}, for each $d=3,4,5$, and for each of the noise levels $\sigma = 10^{-17}, 10^{-16.5}, 10^{-16}, \dots, 10^0$. The underlying true Grassmann decomposition is different in each random sample, so all data points are completely independent of one another. The maximum of the resulting relative forward errors, as compared to the true Grassmann decomposition of the noiseless tensor, is shown in \cref{fig_perturbation}.

\begin{figure}[tb]\centering
\includegraphics[width=.8\textwidth]{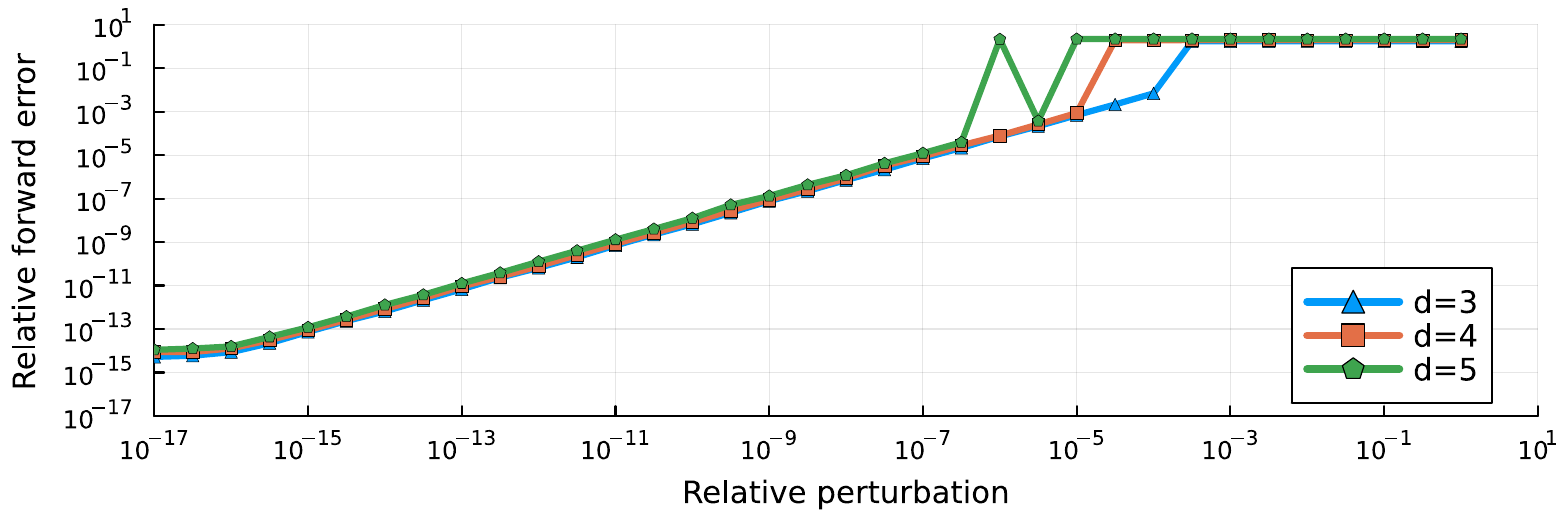}
\caption{The maximum relative forward error under relative perturbations for $100$ random noisy Gr-rank-$10$ tensors in $\wedge^d \mathbb{C}^{50}$ for $d=3,4,5$.}
\label{fig_perturbation}
\Description{Piecewise continuous curves are visible for $d=3,4,5$: for a relative perturbation between $10^{-17}$ and $10^{-16}$, the curves are horizontal at a relative forward error of about $10^{-14}$, for a relative perturbation between $10^{-16}$ to about $10^{-6}$ the curves rise linearly with slope $1$ in this log-log plot, indicating that the relative forward error grows proportionally to the relative perturbation, and finally from a relative perturbation higher than $10^{-6}$, the maximum forward error jumps up to $10$, indicating failure.}
\end{figure}

Observe in \cref{fig_perturbation} that from $\sigma = 10^{-16}$ to about $10^{-6}$ the relative forward error is equal to about $100 \sigma$, indicating a solid robustness to white Gaussian noise added to a true low-rank Grassmann tensor. For $\sigma \le 10^{-16}$, the relative forward error plateaus out. This is expected because only double-precision floating-point arithmetic was used and the noise drowns in the signal. For large noise levels, i.e., $\sigma \ge 10^{-6}$ for $d=5$ and $\sigma \ge 10^{-5}$ for $d \le 4$, the relative forward error suddenly jumps to $\approx 10^0$. This can be attributed partly to the failure of the matching algorithm in the computation of the relative forward error, and partly to the failure of \cref{alg_main_computation} in this high-error regime. 

\section{Conclusions}\label{sec_conclusions}
This article proposed \cref{alg_main_computation}, the first efficient numerical algorithm for real and complex Grassmann decomposition of generic order-$d$ skew-symmetric tensors in $\wedge^d \FF^m$ admitting such a decomposition up to small numerical perturbations and of rank $r \le \frac{m}{d}$. The technique is based on the framework of Brooksbank, Kassabov, and Wilson \citep{BKW2024}, which relies on the extraction of tensor decompositions from linearly-computable invariants for the tensor isomorphism problem. 
An efficient Julia implementation of the numerical algorithm from \cref{sec_numerical_algorithm} for both real and complex skew-symmetric tensors, represented intrinsically in $\wedge^d \FF^m \simeq \FF^{\binom{m}{d}}$ was developed. Numerical experiments support the claims about efficiency and accuracy in the case of random low-rank Grassmann decompositions.

It is an open question to what extent \cref{alg_main_computation} can be used as an effective initialization for optimization-based techniques for Grassmann decomposition in the high-noise regime, in light of the results in \cref{sec_sub_noisy}.
Another avenue for further study concerns the numerical stability of the proposed algorithm, in particular in the setting of ill-conditioned Grassmann decomposition problems. Brooksbank, Kassabov, and Wilson \citep{BKW2024} also proposed other chisels for sparsification, which might also be used to compute low-Gr-rank decompositions. Different chisels will have varying computational cost and are also anticipated to admit different numerical stability characteristics. 

\begin{acks}
This research was partly funded by the \grantsponsor{FWO}{Research Foundation---Flanders (FWO)}{https://www.fwo.be} with projects \grantnum{FWO}{G080822N} and \grantnum{FWO}{G011624N}.

A.~Taveira Blomenhofer is thanked for inviting me to the Centrum voor Wiskunde en Informatica (CWI) where he inspired me to think about Grassmann decompositions and elaborated on his recent identifiability results on February 19, 2024. 

The organizers of the \emph{Tensors: Algebra--Geometry--Applications (TAGA24)} conference at the Colorado State University Rocky Mountain Campus, Pingree Park, CO held from June 3 to 7, 2024 are thanked for this wonderful event, where I first learned about the research of P.~Brooksbank, M.~Kassabov, and J.~B.~Wilson.
I am indebted to H.~Abo for inquiring about the nondefectivity of secant varieties of the Grassmannian of $3$-dimensional subspaces on the bus to the Rocky Mountain Campus, hereby further setting my mind on Grassmann decompositions.
I thank P.~Brooksbank, M.~Kassabov, and J.~B.~Wilson for interesting discussions we had at TAGA24 on the sparsification of tensors through their chisel-based framework. In addition, I thank J.~B.~Wilson for his visit to Leuven on September 26, 2024, and the very insightful discussions we had with T.~Seynnaeve, D.~Taufer, and D.~Thorsteinsson.

T.~Seynnaeve and D.~Taufer are thanked for our discussions on this topic and for feedback they provided on an earlier version of this manuscript.

\changed{The three reviewers are thanked for their detailed comments that resulted in substantial improvements to this article: the expanded discussion of related decompositions and additional interpretations of skew-symmetric tensors in \cref{sec_introduction}; streamlining of the algebraic preliminaries in \cref{sec_preliminaries}; the expanded discussion of Brooksbank, Kassabov, and Wilson's chiseling framework applied to the tensor rank decomposition in \cref{sec_dleto}; a few additional clarifications in \cref{sec_algorithm}; and the simplified algorithms in \cref{sec_sub_hosvd,sec_sub_system_solve}.
The editor, Edgar Solomonik, is thanked for handling the process.}
\end{acks} 

\appendix 
\section{Elementary properties of the wedge product}
For convenience, a number of standard properties of the wedge product \cref{eqn_wedge_product} and skew-symmetric tensors \changed{are recalled. They} will be used freely throughout this paper; see \citep{Greub1978,Lang2002,Landsberg2012,Winitzki2023}.

The following well-known properties follow immediately from \cref{eqn_wedge_product} and the multilinearity of the tensor product \citep{Greub1978}.

\begin{lemma}\label{lem_elementary_props}
The following properties hold for all \changed{vectors} $\vect{v}_i, \vect{v} \in \VS{V}$ and \changed{scalars} $\alpha,\beta \in \FF$:
 \begin{enumerate}[1.]
\item Nilpotency: $\vect{v}_1 \wedge\dots\wedge \vect{v}_d = 0$ if and only if $\dim \langle \vect{v}_1,\ldots,\vect{v}_d \rangle < d$.
\item Anti-symmetry: $\vect{v}_{\sigma_1} \wedge\dots\wedge \vect{v}_{\sigma_d} = \sign(\sigma) \vect{v}_1 \wedge\dots\wedge \vect{v}_d$ for all $\sigma\in\mathfrak{S}([d])$.
\item Multilinearity: $(\alpha \vect{v}_1 + \beta \vect{v}) \wedge\vect{v}_2\wedge\dots\wedge \vect{v}_d = \alpha \vect{v}_1 \wedge\dots\wedge \vect{v}_d + \beta \vect{v} \wedge\vect{v}_2\wedge\dots\wedge \vect{v}_d$.
\end{enumerate}
\end{lemma}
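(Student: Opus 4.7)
The plan is to prove the three properties essentially directly from the defining formula \cref{eqn_wedge_product} by standard manipulations of symmetric-group sums, handling multilinearity first, then anti-symmetry, and then nilpotency (which uses the first two). Throughout I rely only on the multilinearity of the tensor product $\otimes$ and on the fact that $\mathfrak{S}([d])$ acts freely on itself by multiplication.

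First I would prove property 3 (multilinearity). By \cref{eqn_wedge_product}, the position of $\vect{v}_1$ inside the summand indexed by $\sigma$ is slot $\sigma^{-1}(1)$. Substituting $\alpha\vect{v}_1 + \beta\vect{v}$ in the first argument and invoking multilinearity of $\otimes$ in slot $\sigma^{-1}(1)$ splits each summand into two pieces with coefficients $\alpha$ and $\beta$. Collecting the two resulting sums over $\sigma$ gives $\alpha (\vect{v}_1 \wedge \cdots \wedge \vect{v}_d) + \beta (\vect{v} \wedge \vect{v}_2 \wedge \cdots \wedge \vect{v}_d)$, and by symmetry of the argument multilinearity holds in every slot.

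Next I would prove property 2 (anti-symmetry). For fixed $\tau \in \mathfrak{S}([d])$, write
\[
\vect{v}_{\tau_1} \wedge \cdots \wedge \vect{v}_{\tau_d} = \sum_{\sigma \in \mathfrak{S}([d])} \sign(\sigma)\, \vect{v}_{\tau(\sigma_1)} \otimes \cdots \otimes \vect{v}_{\tau(\sigma_d)}.
\]
Setting $\rho = \tau\sigma$ turns the sum into $\sum_\rho \sign(\tau^{-1}\rho) \vect{v}_{\rho_1}\otimes\cdots\otimes\vect{v}_{\rho_d}$. Since $\sign$ is a group homomorphism and $\sign(\tau^{-1}) = \sign(\tau)$, factoring out $\sign(\tau)$ recovers $\sign(\tau) \cdot (\vect{v}_1\wedge\cdots\wedge\vect{v}_d)$.

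Finally I would prove property 1 (nilpotency) using the previous two. For the easy direction, if the $\vect{v}_i$ are linearly dependent, some $\vect{v}_k$ is a linear combination of the others. Expanding by multilinearity (property 3) in slot $k$ reduces the claim to showing that a wedge with two equal arguments vanishes; swapping those two equal slots by a transposition and applying property 2 shows the tensor equals its own negative, hence is zero since $\mathrm{char}\,\FF \ne 2$ (the paper assumes $\FF \in \{\mathbb{R},\mathbb{C}\}$). For the converse, suppose the $\vect{v}_i$ are linearly independent. Extend them to a basis $(\vect{v}_1,\dots,\vect{v}_d,\vect{v}_{d+1},\dots,\vect{v}_n)$ of $\VS{V}$; then the $n^d$ tensors $\vect{v}_{i_1}\otimes\cdots\otimes\vect{v}_{i_d}$ form a basis of $\VS{V}^{\otimes d}$. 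The $d!$ summands $\vect{v}_{\sigma_1}\otimes\cdots\otimes\vect{v}_{\sigma_d}$ in \cref{eqn_wedge_product} are distinct basis elements of $\VS{V}^{\otimes d}$, so the signed sum defining $\vect{v}_1\wedge\cdots\wedge\vect{v}_d$ has all $\pm 1$ coordinates in this basis and is therefore nonzero.

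The only step requiring slight care is the forward direction of property 1, since it implicitly invokes existence of a basis and uses that the $d!$ permuted tensors are genuinely distinct (which relies on linear independence of the $\vect{v}_i$); everything else is bookkeeping with symmetric-group reindexing and multilinearity of $\otimes$.
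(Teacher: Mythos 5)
Your proof is correct, and it is exactly the argument the paper has in mind: the paper does not spell out a proof at all, but simply asserts that these well-known properties ``follow immediately from \cref{eqn_wedge_product} and the multilinearity of the tensor product'' with a citation to standard references. Your write-up supplies precisely those standard details (reindexing the sum over $\mathfrak{S}([d])$ for anti-symmetry, slot-wise multilinearity of $\otimes$, and the basis-extension argument for the nonvanishing direction of nilpotency, valid here since $\FF\in\{\mathbb{R},\mathbb{C}\}$), so there is nothing to correct.
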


Note that the linearity in the first factor (property 3), extends to multilinearity, i.e., linearity in any given factor, by exploiting property 2.

By definition, elementary skew-symmetric tensors \changed{are elements of} the tensor product $\VS{V}^{\otimes d}$. The linear space they span is denoted by $\wedge^d \VS{V} \subset \VS{V}^{\otimes d}$ and called the space of skew-symmetric tensors. The following result is standard; see, e.g., \citep[Chapter XIX]{Lang2002}.

\begin{lemma}\label{lem_basis}
If $\{ \vect{v}_1, \ldots, \vect{v}_n \}$ is a basis of $\VS{V}$, then $\{ \vect{v}_{i_1}\wedge\dots\wedge\vect{v}_{i_d} \}_{1 \le i_1 < \dots < i_d \le n}$ is a basis of $\wedge^d \VS{V}$.
Consequently, $\dim \wedge^d \VS{V} = \binom{n}{d}$.
\end{lemma}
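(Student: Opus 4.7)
The plan is to verify the two standard properties of a basis separately. For \textbf{spanning}, we recall that by the very definition preceding the lemma, $\wedge^d \VS{V}$ is the linear span of elementary skew-symmetric tensors, so it suffices to show that an arbitrary elementary tensor $\vect{w}_1 \wedge \cdots \wedge \vect{w}_d$ lies in the span of $\{\vect{v}_{i_1}\wedge\cdots\wedge\vect{v}_{i_d}\}_{i_1<\cdots<i_d}$. Expressing each $\vect{w}_j = \sum_{i=1}^n a_{ji}\vect{v}_i$ and invoking multilinearity (Lemma~\ref{lem_elementary_props}(3)) in every factor, one obtains a sum of terms of the form $\vect{v}_{k_1}\wedge\cdots\wedge\vect{v}_{k_d}$ with arbitrary index tuples. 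Nilpotency (Lemma~\ref{lem_elementary_props}(1)) kills the terms with repeated indices, and antisymmetry (Lemma~\ref{lem_elementary_props}(2)) allows one to sort the remaining index tuples into strictly increasing order at the cost of a sign. This establishes spanning.

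For \textbf{linear independence}, the plan is to construct, for each strictly increasing tuple $J = (j_1,\dots,j_d)$, a linear functional $\Lambda_J : \VS{V}^{\otimes d} \to \FF$ which, when restricted to the proposed spanning set, picks out the element indexed by $J$. Using the dual basis $\{\vect{v}^*_i\}$ of $\{\vect{v}_i\}$, I define
\[
\Lambda_J(\vect{u}_1 \otimes \cdots \otimes \vect{u}_d) := \vect{v}^*_{j_1}(\vect{u}_1)\cdots \vect{v}^*_{j_d}(\vect{u}_d)
\]
and extend by linearity. Applying $\Lambda_J$ to an elementary wedge $\vect{v}_{i_1}\wedge\cdots\wedge\vect{v}_{i_d}$ expanded via \cref{eqn_wedge_product} yields
\[
\Lambda_J(\vect{v}_{i_1}\wedge\cdots\wedge\vect{v}_{i_d}) = \sum_{\sigma \in \mathfrak{S}([d])} \sign(\sigma)\,\delta_{j_1, i_{\sigma_1}}\cdots\delta_{j_d, i_{\sigma_d}}.
\]
For two strictly increasing tuples $I$ and $J$, at least one factor vanishes unless $I = J$ as sets, in which case, since both are strictly increasing, they must coincide as tuples and only $\sigma=\mathrm{id}$ contributes, giving $1$. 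Thus $\Lambda_J$ evaluated on the spanning set is the Kronecker delta, which forces all coefficients in a linear dependence to be zero.

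The \textbf{dimension count} is then immediate: strictly increasing tuples $1 \le i_1 < \cdots < i_d \le n$ are in bijection with $d$-element subsets of $[n]$, of which there are $\binom{n}{d}$.

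The main (small) obstacle I anticipate is the bookkeeping in the independence step, specifically verifying carefully that when $I \ne J$ are both strictly increasing, every permutation $\sigma$ produces a vanishing factor; this rests on the observation that two strictly increasing tuples with the same underlying set must be identical, which removes the only possibility of a nonzero contribution.
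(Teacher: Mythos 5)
Your proof is correct. Note that the paper does not actually prove \cref{lem_basis}: it is stated as a standard fact with a citation to Lang, so there is no in-paper argument to compare against. Your argument is the classical one and is self-contained given the paper's conventions: since the paper defines $\wedge^d\VS{V}$ as the linear span of the elementary tensors, your spanning step (expand each factor in the basis, discard repeated-index terms by \cref{lem_elementary_props}(1), sort by \cref{lem_elementary_props}(2)) is exactly what is needed, and your independence step via the product functionals $\Lambda_J$ on $\VS{V}^{\otimes d}$ works as claimed --- with the paper's unnormalized definition \cref{eqn_wedge_product} of the wedge, only $\sigma=\mathrm{id}$ survives when $I=J$, giving $\Lambda_J$ value exactly $1$ there and $0$ otherwise. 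The only cosmetic caveat is that "nilpotency" in \cref{lem_elementary_props}(1) is stated as linear dependence of the factors, which indeed covers the repeated-index case you invoke.
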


If we equip $\VS{V}$ with an inner product $\langle\cdot,\cdot\rangle$, then there is an induced inner product in $\VS{V}^{\otimes d}$ \citep{Hackbusch2019}:
\[
 \langle\cdot,\cdot\rangle : \VS{V}^{\otimes d} \times \VS{V}^{\otimes d} \to \FF, \quad (\vect{v}_1\otimes\dots\otimes\vect{v}_d, \vect{v}_1'\otimes\dots\otimes\vect{v}_d') \mapsto \prod_{i=1}^{d} \langle \vect{v}_i,\vect{v}_i' \rangle.
\]

Multilinear multiplication interacts with skew-symmetric tensors as follows.

\begin{lemma}[Multilinear multiplication]\label{lem_mult_skew}
Let $A : \VS{V} \to \VS{W}$ be a linear map. Then,
\[
A\otimes\cdots\otimes A : \wedge^d \VS{V} \to \wedge^d \VS{W}, \quad \vect{v}_1\wedge\dots\wedge \vect{v}_d \mapsto (A \vect{v}_1) \wedge\dots\wedge (A\vect{v}_d).
\]
\end{lemma}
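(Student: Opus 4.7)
The plan is to prove the statement by direct computation from the definition of the wedge product in \cref{eqn_wedge_product}, and then extend to all of $\wedge^d \VS{V}$ by linearity and \cref{lem_basis}.

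First, I would verify the formula on an elementary skew-symmetric tensor. Starting from the definition
\[
 \vect{v}_1 \wedge\cdots\wedge \vect{v}_d = \sum_{\sigma\in\mathfrak{S}([d])} \sign(\sigma)\, \vect{v}_{\sigma_1} \otimes\cdots\otimes \vect{v}_{\sigma_d},
\]
I would apply $A^{\otimes d}$, invoking the defining property of the tensor product of linear maps, namely $(A\otimes\cdots\otimes A)(\vect{x}_1\otimes\cdots\otimes\vect{x}_d) = (A\vect{x}_1)\otimes\cdots\otimes(A\vect{x}_d)$, together with linearity to push the map through the finite sum. This gives
\[
 (A\otimes\cdots\otimes A)(\vect{v}_1 \wedge\cdots\wedge \vect{v}_d) = \sum_{\sigma\in\mathfrak{S}([d])} \sign(\sigma)\, (A\vect{v}_{\sigma_1}) \otimes\cdots\otimes (A\vect{v}_{\sigma_d}),
\]
which is exactly $(A\vect{v}_1)\wedge\cdots\wedge(A\vect{v}_d)$ by another application of \cref{eqn_wedge_product}. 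This settles the pointwise formula on elementary tensors.

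Second, I would confirm that $A^{\otimes d}$, viewed as a map on $\VS{V}^{\otimes d}$, restricts to a well-defined map $\wedge^d \VS{V} \to \wedge^d \VS{W}$. By \cref{lem_basis}, the elementary skew-symmetric tensors $\vect{v}_{i_1}\wedge\cdots\wedge \vect{v}_{i_d}$ (for any basis of $\VS{V}$) span $\wedge^d \VS{V}$, and the computation above shows that each is sent into $\wedge^d \VS{W}$ since the image is itself an elementary skew-symmetric tensor in $\VS{W}$. Linearity of $A^{\otimes d}$ then yields that the image of all of $\wedge^d \VS{V}$ lies in $\wedge^d \VS{W}$, so the restriction is a well-defined linear map with the claimed action on elementary tensors.

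There is no real obstacle here; the statement is essentially a direct consequence of the compatibility of $A^{\otimes d}$ with permutations of factors. The only subtlety worth being careful about is not conflating the map $A^{\otimes d}$ as a map $\VS{V}^{\otimes d}\to\VS{W}^{\otimes d}$ with its restriction to $\wedge^d \VS{V}$; the latter is defined unambiguously precisely because elementary skew-symmetric tensors span the domain and their images are again skew-symmetric.
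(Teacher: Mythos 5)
Your proof is correct and follows exactly the route the paper takes: the paper's own proof is a one-line appeal to the definition of the wedge product \cref{eqn_wedge_product}, linearity, and the defining property of the tensor product of linear maps, which is precisely the computation you spell out. Your additional remark on the well-definedness of the restriction via \cref{lem_basis} is a fine (if implicit in the paper) clarification, not a departure.
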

\begin{proof}
This follows immediately from the definition of the wedge product \cref{eqn_wedge_product}, linearity, and the definition of multilinear multiplication.
\end{proof}

Note the precise claim made in the previous lemma: the regular tensor product $A\otimes\cdots\otimes A$ when restricted to the subspace of skew-symmetric tensors maps into a space of skew-symmetric tensors. One could alternatively look at the natural action of $A$ on a skew-symmetric tensor, which would be defined exactly as in the lemma.

The next two well-known facts relate the wedge product to determinants.

\begin{lemma}\label{cor_det_transform}
Let $\VS{V} \subset \VS{W}$ be an $n$-dimensional subspace of $\VS{W}$. Let $\vect{v}_1, \ldots, \vect{v}_n$ and $\vect{v}_1', \ldots, \vect{v}_n'$ be bases of $\VS{V}$. Then,
\[
 \vect{v}_1 \wedge\dots\wedge \vect{v}_n = \det(X) \vect{v}_1' \wedge\dots\wedge \vect{v}_n',
\]
where $X\in\FF^{n\times n}$ is such that $V = V' X$ with $V=[\vect{v}_i]$ and $V'=[\vect{v}_i']$.
\end{lemma}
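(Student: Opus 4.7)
The plan is to prove this by a direct computation that relies only on the three elementary properties of the wedge product collected in Lemma \ref{lem_elementary_props}. Since $V' = [\vect{v}_1' \cdots \vect{v}_n']$ is a basis of $\VS{V}$ and $V = V' X$, each vector $\vect{v}_i$ admits the expansion $\vect{v}_i = \sum_{j=1}^n x_{ji} \vect{v}_j'$, where $X = [x_{ji}]$. This is the only ingredient that uses the hypothesis on $X$.

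Next, I would plug these expansions into $\vect{v}_1 \wedge\dots\wedge \vect{v}_n$ and apply the multilinearity property (Lemma \ref{lem_elementary_props}(3)) in each of the $n$ slots. This yields an $n$-fold sum
\[
\vect{v}_1 \wedge\dots\wedge \vect{v}_n \;=\; \sum_{j_1,\ldots,j_n=1}^{n} x_{j_1,1}\, x_{j_2,2} \cdots x_{j_n,n}\; \vect{v}_{j_1}' \wedge\dots\wedge \vect{v}_{j_n}'.
\]
By nilpotency (Lemma \ref{lem_elementary_props}(1)), every term in which two of the indices $j_k$ coincide vanishes, since then the vectors $\vect{v}_{j_1}',\ldots,\vect{v}_{j_n}'$ are linearly dependent. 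Hence only the tuples $(j_1,\ldots,j_n)$ that are permutations $\sigma\in\mathfrak{S}([n])$ of $[n]$ contribute.

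Finally, I would apply anti-symmetry (Lemma \ref{lem_elementary_props}(2)) to each surviving wedge, giving $\vect{v}_{\sigma(1)}' \wedge\dots\wedge \vect{v}_{\sigma(n)}' = \sign(\sigma)\, \vect{v}_1' \wedge\dots\wedge \vect{v}_n'$. Factoring out $\vect{v}_1'\wedge\dots\wedge\vect{v}_n'$ then produces
\[
\vect{v}_1 \wedge\dots\wedge \vect{v}_n \;=\; \Bigl(\sum_{\sigma \in \mathfrak{S}([n])} \sign(\sigma)\, x_{\sigma(1),1} \cdots x_{\sigma(n),n}\Bigr)\, \vect{v}_1' \wedge\dots\wedge \vect{v}_n',
\]
and the bracketed scalar is exactly the Leibniz formula for $\det(X)$, completing the argument.

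There is no real obstacle: the proof is a textbook computation, and all three properties used are already listed in Lemma \ref{lem_elementary_props}. The only mild care required is to keep track of the fact that in the Leibniz sum the entry $x_{\sigma(k),k}$ appears with row index $\sigma(k)$ and column index $k$, which matches the relation $\vect{v}_i = \sum_j x_{ji}\vect{v}_j'$ induced by the convention $V = V' X$.
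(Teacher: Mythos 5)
Your proof is correct. The paper states this lemma in the appendix as a well-known fact and gives no proof of its own, so there is nothing to diverge from; your argument---expand $\vect{v}_i=\sum_j x_{ji}\vect{v}_j'$ via $V=V'X$, apply multilinearity, kill the repeated-index terms by nilpotency, and collect signs via anti-symmetry into the Leibniz formula for $\det(X)$---is exactly the standard computation one would supply, it uses only the properties already recorded in \cref{lem_elementary_props}, and your index bookkeeping ($x_{\sigma(k),k}$, i.e.\ $\det(X^T)=\det(X)$) is right.
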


\begin{lemma}
 Choose a basis $\vect{v}_1,\dots,\vect{v}_n$ of $\VS{V}$. Then, \(\vect{v}_1 \wedge\dots\wedge \vect{v}_n = \det(V) \vect{e}_1 \wedge\dots\wedge \vect{e}_n,\)
 where $V$ is the matrix formed by placing the $\vect{v}_i$'s as columns and $\vect{e}_1,\dots,\vect{e}_n$ is the standard basis of $\FF^n$.
\end{lemma}

\section{Computing wedge products} \label{sec_sub_wedge}

The most naive way of computing the wedge product of $d$ vectors in a $m$-dimensional vector space consists of applying \cref{eqn_wedge_product}. Implemented as stated, this leads to a nasty complexity of $\Var{O}( d! m^d )$.

To circumvent this enormous complexity, \changed{we can} use the splitting suggested by \cref{lem_wedge_flattening} recursively. That is, in the notation of \cref{lem_wedge_flattening}, first recursively compute the $2\binom{d}{k}$ wedge products $\vect{v}_{\eta_1}\wedge\dots\wedge\vect{v}_{\eta_k}$ and $\vect{v}_{\theta_1}\wedge\dots\wedge\vect{v}_{\theta_{d-k}}$. They can be placed \changed{respectively} as the columns of two matrices $A$ and $B$, which are then multiplied to yield the $(\sigma; \rho)$-flattening $M=AB^\trans{}$ of $\vect{v}_1\wedge\dots\wedge\vect{v}_d$. The relevant coordinates can then be extracted from $M$.

If we denote the cost of computing the wedge product of $k$ vectors by $C_{\wedge}^{m,k}$, then the foregoing algorithm entails an upper bound of
\begin{equation}\label{eqn_recursive_bound}
C_{\wedge}^{m,d} \le \underbrace{\binom{d}{k} ( C_{\wedge}^{m,k} + C_{\wedge}^{m,d-k})}_\text{recursive computation} + \underbrace{2 \binom{m}{k} \binom{d}{k} \binom{m}{d-k}}_\text{matrix multiplication} + \underbrace{\binom{m}{d}}_\text{inverse flattening}
\end{equation}
elementary operations. The base case is $C_{\wedge}^{m,1} = m$. The complexity depends on the chosen splitting; it is an open question to determine the splitting strategy that minimizes the number of operations. Empirically \changed{it seems} that the unbalanced choice $k=1$ leads to better execution times than the balanced choice $k=\lfloor d/2 \rfloor.$ 

For the choice $k=1$, the upper bound \cref{eqn_recursive_bound} can be expanded as follows.
Recall that if $F_d \le d F_{d-1} + g_d$ with $F_0 = 0$, then
\begin{align}\label{eqn_recursive_bound_solution}
\begin{split}
F_d
\le g_d + d F_{d-1}
&\le g_d + d g_{d-1} + d(d-1) F_{d-2}
\le \cdots \le \sum_{k=0}^{d-1} d^{\underline{k}} g_{d-k},
\end{split}
\end{align}
where $x^{\underline{k}} = x (x-1) \cdots (x-k+1)$ and $x^{\underline{0}}=1$.
Considering \cref{eqn_recursive_bound}, we can bound
\[
\binom{m}{k} \binom{m}{d-k} \binom{d}{k} 
= \frac{m^{\underline{k}}}{k!} \frac{m^{\underline{d-k}}}{(d-k)!} \binom{d}{k} 
\le \frac{m^d}{k! (d-k)!} \binom{d}{k} 
= \frac{m^d}{d!} \binom{d}{k}^2.
\]
Letting $g_d = 4 \frac{m^d}{d!} d^2$, we see that 
\[
d^{\underline{k}} g_{d-k} 
= 4 (d-k)^2 \frac{d^{\underline{k}} m^{d-k}}{(d-k)!} 
= 4 \frac{d^{\underline{k+1}} m^{d-k}}{(d-k-1)!}
\le 4 d^2 m^{d-1} \frac{1}{(d-k-1)!},
\]
having used $d \le m$ in the final step.
Since \cref{eqn_recursive_bound} is of the form \cref{eqn_recursive_bound_solution}, we find
\begin{align*}
 C_{\wedge}^{m,d}
\le \sum_{k=0}^{d-1} d^{\underline{k}} g_{d-k}
\le 4 d^2 m^{d-1} \sum_{k=0}^{d-1} \frac{1}{(d-k-1)!}
\le 4 d^2 m^{d-1} \sum_{k=0}^{\infty} \frac{1}{k!} 
= \BigO{d^2 m^{d-1}}
 \end{align*}
operations as asymptotic time complexity for $m\to\infty$ and fixed $d$.

\bibliographystyle{ACM-Reference-Format} 
\bibliography{dleto-skew}

\end{document}